\newcommand{\bd}[1]{\mathbf{#1}}
\newcommand{\sch}{\mathbf{Sch}}
\newcommand{\ssch}{\mathbf{Sch}/S}
\newcommand{\tsch}{\mathbf{Sch}/T}
\newcommand{\sschf}{(\ssch)_f}
\newcommand{\coev}{\mathrm{coev}}
\newcommand{\GL}{\mathrm{GL}}
\newcommand{\indord}{{\mathrm{Ind}}}
\newcommand{\resord}{{\mathrm{Res}}}
\newcommand{\Com}{\mathfrak{C}om}
\newcommand{\ladd}{\mathrm{add}}
\newcommand{\delh}{{\mathfrak{R}ep(\h)}}
\newcommand{\e}[1]{\mathbb{#1}}
\newcommand{\ps}{\mathrm{Ps  }}
\newcommand{\rec}{\mathrm{Rec}}
\newcommand{\kar}{\mathrm{Kar}}
\newcommand{\const}{\mathrm{Const}}
\newcommand{\injr}{\mathbf{Inj}}
\newcommand{\ev}{\mathrm{Ev}}
\renewcommand{\sp}{\mathrm{Sp}}
\newcommand{\abs}[1]{\left \lvert #1 \right \rvert}
\newcommand{\sgn}{\mathrm{sgn }}
\newcommand{\im}{\mathrm{Im}}
\newcommand{\h}{\mathscr{H}}
\renewcommand{\t}{\mathfrak{t}}
\renewcommand{\hom}{\mathrm{Hom}}
\newcommand{\gl}{\mathfrak{gl}}
\DeclareMathOperator\ind{Ind}
\DeclareMathOperator\one{One}
\DeclareMathOperator\card{Card}
\DeclareMathOperator\rep{Rep}
\DeclareMathOperator\res{Res}
\newcommand{\ress}{\res^*}
\DeclareMathOperator\inj{Inj}
\newcommand{\f}[1]{\ensuremath{\mathfrak{#1}}}
\newcommand{\C}{\mathbb{C}}
\newcommand{\ob}{\mathrm{Ob}}
\newcommand{\mor}{\mathcal{M}{or}}
\newcommand{\comp}{\mathrm{Comp}}
\newcommand{\spec}{\mathrm{Spec} \ }
\newcommand{\cod}{\mathrm{cod}}
\newcommand{\dom}{\mathrm{dom}}
\renewcommand{\b}[1]{\overline{#1}}
\newcommand{\id}{\mathrm{Id}}
\title{Categories parametrized by schemes and representation theory in complex rank}
\author{Akhil Mathew}
\address{Madison High School, 170 Ridgedale Avenue, Madison, NJ 07940}
\email{akhilmathew@verizon.net}
\newcommand{\del}{\mathfrak{R}ep(S)}
\newcommand{\A}{\mathbb{A}}
\newcommand{\repord}{\rep^{\mathrm{ord}}}
\newtheorem{theorem}{Theorem}[section]
\newtheorem{lemma}[theorem]{Lemma}
\newtheorem{proposition}[theorem]{Proposition}
\newtheorem{corollary}[theorem]{Corollary}
\theoremstyle{definition}
\newtheorem{definition}[theorem]{Definition}
\newtheorem*{remark}{Remark}
\renewcommand{\subsubsection}[1]{}
\renewcommand{\paragraph}[1]{\textbf{#1}}
\begin{document}
\renewcommand{\headrulewidth}{0pt}
\pagestyle{fancy}

\maketitle

\begin{abstract}

Many key invariants in the representation theory of classical
groups (symmetric groups $S_n$, matrix groups $GL_n$, $O_n$,
$Sp_{2n}$) are polynomials in $n$ (e.g., dimensions of irreducible
representations). This allowed Deligne to extend the
representation theory of these groups to complex values of the
rank $n$. Namely, Deligne defined generically
semisimple families of tensor categories parametrized by $n\in \mathbb{C}$,
which at positive integer $n$ specialize to the classical
representation categories. Using Deligne's work,
Etingof proposed a similar extrapolation for many
{\it non-semisimple} representation categories built
on representation categories of classical groups,
e.g., degenerate affine Hecke algebras (dAHA).
It is expected that for generic $n\in \mathbb{C}$ such extrapolations
behave as they do for large integer $n$ (``stabilization").

The goal of our work is to provide a technique to prove
such statements. Namely, we develop an algebro-geometric
framework to study categories indexed by a parameter $n$, in which
the set of values of $n$ for which the category has a given property
is constructible. This implies that if a property holds
for integer $n$, it then holds for generic complex $n$.
We use this to give a new proof that Deligne's categories are generically semisimple. We also apply this method to
Etingof's extrapolations of dAHA, and prove that when $n$ is 
transcendental,
``finite-dimensional'' simple objects are quotients of certain standard 
induced
objects, extrapolating Zelevinsky's classification of
simple dAHA-modules for $n\in \mathbb{N}$. Finally, we
obtain similar results for the extrapolations
of categories associated to wreath products of the
symmetric group with associative algebras.
\end{abstract}

\newcommand{\ssp}{\mathbf{Sp}}
\newcommand{\unital}{\mathbf{1}}
\newcommand{\Al}{\mathbf{A}}
\section{Introduction}

The representation theory of the classical groups yields families of semisimple symmetric tensor categories $\repord(S_n), \repord(GL_n)$, etc. These categories, which depend on the parameter $n$ called the \emph{rank}, have a rich and well-studied structure, e.g., parametrizations of the simple objects and tensor product formulas. It turns out, however, that many of these parametrizations can be described polynomially in $n$.  As a result of this,  Deligne \cite{De2, De} constructed  interpolations of the categories of representations of classical groups to complex rank.  These form families of symmetric tensor categories that when specialized to positive integral parameters, reduce modulo a tensor ideal to the classical representation-theoretic categories.  These constructions, together with Deligne's proof of semisimplicity, have been extended in \cite{Kn2, Kn} to the representation categories of other families of groups such as the semidirect products $S_n \ltimes G^n$ for finite groups $G$ and $\GL_n(\mathbb{F}_q)$.     

Many families of algebraic objects of interest in  representation theory (e.g. affine Hecke algebras, Cherednik algebras, etc.) are built out of the classical groups, and thus depend on a rank $n \in \mathbb{Z}_{\geq 0}$. In many cases, the additional generators and relations of such objects can be stated in a uniform ``category-friendly'' framework independent of $n$.  This allowed Etingof to use Deligne's categories to define non-semisimple  categories of representations of algebraic objects in complex rank. 
In \cite{Et} and \cite{Etdraft}, a general program has been proposed to study
non-semisimple Deligne categories that interpolate the categories of representations of  algebraic objects derived from these groups. 
It is of interest to study the structure of these categories for their own sake; one might also hope that the theory of categorification will find in them a convenient setting.

In this paper, we begin investigating the categories of Etingof's program and show, loosely speaking, that the categories are generically similar to the classical categories.  The paper is structured as follows.  \S 2 surveys the work of Deligne and Etingof.  In \S 3, we build a method, typified by Theorem~\ref{ssgeneric} and Proposition~\ref{limisgeneric},  of deducing such properties based on a  framework of categories parametrized by schemes.  In \S 4, we apply this method to the categories of representations of the degenerate affine Hecke algebra in complex rank.  \S 5 studies the category of representations of semidirect products $S_n \ltimes A^{\otimes n}$ for $n$ complex and $A$ an associative algebra; we obtain a generic classification of simple objects. In addition, we introduce tensor structures on these categories for $A$ a Hopf algebra. In \S 6, we demonstrate that these categories are generically tensor equivalent to  Knop's categories of wreath product representations. We expect that these techniques will  be useful in the study of the other categories introduced in \cite{Et, Etdraft}, which we plan to investigate in the future.

\subsection*{Acknowledgments} The author heartily thanks Pavel Etingof and Dustin Clausen  for teaching him the subject and numerous helpful discussions and suggestions.  The author also thanks Alexander Kleshchev for helpful comments. This work was begun at the Research Science Institute at MIT, where the author was supported by the Ingersoll Rand Company.

\newcommand{\R}{\mathscr{R}}
\newcommand{\delht}{\del_t^H}

\newcommand{\Aa}{\mathscr{A}}
\newcommand{\add}{\mathrm{Add}}

\newcommand{\rrep}{\mathfrak{R}ep}
\newcommand{\rrrep}{\rrep}

\section{Preliminaries}
\subsection{Deligne's construction}
 We now quickly review Deligne's construction, which begins with a combinatorial parametrization of certain representations of the symmetric group.   
Define a \emph{recollement} of  a family of
sets $\{U_i, \ i \in A\}$ as a family of
injections $u_i\colon U_i \to C$ such that $C = \bigcup_i u_i(U_i)$.  Two recollements $C,D$ of $\{U_i\}$ are  \emph{equivalent} 
if 
there is a bijection $C \to D$ commuting with the maps $U_i \to C, U_i \to D$.
 
Now fix $n \in \mathbb{Z}_{\geq 0}$ and define   $I = \{1,
2, \dots, n\}$.  For a finite set $U$, let $ \injr(U,I)$   be the permutation representation  on the $S_n$-set $\inj(U,I)$ of injections $U \to I$.   It is easy to see that every $S_n$-representation is a direct factor of a direct sum of representations of this form.
Given an injection of finite sets $U \to V$, there is a
$S_n$-morphism $\res\colon \injr(V,I) \to \injr(U,I)$ from restriction and an
adjoint $\ress\colon \injr(U,I) \to \injr(V,I)$ with respect to the natural  bilinear forms on $\injr(U,I),\injr(V,I)$, i.e., the ones making the standard bases $\{e_f , f \in \inj(U,I)\}$ and $\{e_g , g \in \inj(V,I)\}$ orthonormal. 

Let $\rec(U,V)$ denote the set of equivalence classes of recollements of $U,V.$  For  $C \in \rec(U,V)$  with representative maps $u\colon U \to C ,v\colon V \to C$,  define  \( (C) = \res_v \circ \ress_u \in \hom_{S_n}( \injr(U,I),\injr(V,I)).\) Another way to think of $(C)$ is as the $S_n$-invariant element of $\injr(U,I) \otimes \injr(U,V)$ consisting of the sum of all basis elements $e_f \otimes e_g$ for $f,g$ inducing the recollement $C$ of $U,V$. Hence, there is a map from $\C\left< [\rec(U,V)] \right>$ to  $\hom_{S_n}(\injr(U,I),\injr(V,I))$, where for a finite set $S$, we let $\C\left<S \right>$ be the free vector space on the basis $S$.  By  Proposition 2.9 of \cite{De}, this map is always surjective and, for $n>\abs{U}+\abs{V}$,  an isomorphism.  It turns out that composition can be described via polynomials in $n$ and recollements.  More precisely,   for finite sets $U, V, W$ and recollements $C \in \rec(U,V), D \in \rec(V,W), E \in \rec(U,W)$, there exists $P^E_{C,D}(T) \in \mathbb{Z}[T]$ such that
\begin{equation} (D)\circ (C) = \sum_{E \in \rec(U,W)} P^E_{C,D}(n) (E) \in \hom_{S_n}( \injr(U,I), \injr(W,I)) \quad \forall \ D,C,n .\end{equation}

 Deligne thus  defined  a $\mathbb{Z}[T]$-linear category $\rep'(S_T)$ as follows:
$\rep'(S_T)$ contains objects $[U]$ for finite sets $U$,
$\hom([U],[V])$ is   $\e{Z}[T]$-free on $\rec(U,V)$, and composition is $\e{Z}[T]$-linear with
\(  (D) \circ (C) = \sum_E
P^E_{C,D}(T) (E). \) For a ring $R$ and $t \in R$, specializing  $T \to t$ gives categories $\rep'(S_t, R)$.  One then 
takes the  pseudo-abelian envelope to obtain the category $\rep(S_t, R)$, abbreviated $\rep(S_t)$ when the
context is clear and $R$ is a field.
 
$\rep(S_t,R)$ can be made into a symmetric $R$-linear monoidal category.  The monoidal structure comes from $[U] \otimes [V] = \bigoplus_{C \in \rec(U,V)} [C]$, which extends to the pseudo-abelian envelope; it can be checked that this interpolates the usual structure.
The monoidal structure on morphisms is given in an analogous manner.  See \cite{De}, Proposition 2.8.  In this category, in fact, each object is self-dual.

In stating the next result, it will be convenient to introduce full subcategories $\rep(S_t, R)^{(N)}$ defined in the same way, but with the sets $U$ used restricted to have cardinality at most $N$.  

\begin{theorem}[Deligne \cite{De}, Prop. 5.1] Let $R$ be a field of characteristic zero. \begin{enumerate}
\item When $t \notin \{0, 1, \dots, 2N-2\}$, $\rep(S_t,R)^{(N)} $ is semisimple, and  the simple objects are indexed by the  partitions of integers $\leq N$.  
\item If $t \in \mathbb{Z}_{\geq 0}$ with $t > 2N$, then $\rep(S_t,R)^{(N)}$ is a full subcategory of the ordinary category $\repord(S_t,R)$ of finite-dimensional $R$-linear representations of $S_t$ (via $[U] \to \injr(U,I)$). 
\item If $t \in \mathbb{Z}_{\geq 0}$, and $\mathcal{N}$ is the tensor ideal of negligible morphisms in $\rep(S_t, R)$, then
\( \repord(S_t, R) \simeq \rep(S_t,R)/\mathcal{N} \)
as symmetric tensor categories.
\end{enumerate}
\end{theorem}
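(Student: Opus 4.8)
The plan is to handle the three parts in the order $(2)$, $(1)$, $(3)$, since $(1)$ uses $(2)$ at auxiliary integer parameters and $(3)$ uses the semisimplicity of $\repord(S_n,R)$. For $n\in\mathbb{Z}_{\geq 0}$, let $F_n\colon\rep(S_n,R)\to\repord(S_n,R)$ be the $R$-linear functor sending $[U]$ to $\injr(U,I)$ and a recollement class $(C)$ to $\res_v\circ\ress_u$; the composition rule recalled above (the identity involving the polynomials $P^E_{C,D}$) shows this is well defined on $\rep'(S_n,R)$ and, since $\repord(S_n,R)$ is idempotent complete, extends to the pseudo-abelian envelope, while the combinatorial identity $\injr(U,I)\otimes\injr(V,I)\cong\bigoplus_{C\in\rec(U,V)}\injr(C,I)$ makes $F_n$ symmetric monoidal. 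Note that $F_n$ restricts to the identity on $\End(\unital)=R$, so, being a monoidal functor between rigid categories, it preserves categorical traces.

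For $(2)$ I would quote Proposition 2.9 of \cite{De}: the map $\C\langle\rec(U,V)\rangle\to\hom_{S_n}(\injr(U,I),\injr(V,I))$ is an isomorphism whenever $n>\abs{U}+\abs{V}$. When $t=n$ is an integer $>2N$ this applies to all $U,V$ with $\abs{U},\abs{V}\le N$, so $F_n$ is fully faithful on the generators $[U]$, $\abs{U}\le N$, hence fully faithful on the pseudo-abelian envelope $\rep(S_t,R)^{(N)}$; this identifies $\rep(S_t,R)^{(N)}$ with a full, summand-closed subcategory of $\repord(S_n,R)$.

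For $(1)$ I would first reduce semisimplicity to an algebra statement. Put $M=\bigoplus_{\abs{U}\le N}[U]$; every object of $\rep(S_t,R)^{(N)}$ is a summand of a power of $M$, so this category is equivalent to the category of finitely generated projective right modules over $A_t:=\End_{\rep(S_t,R)}(M)$, and is therefore semisimple precisely when the finite-dimensional $R$-algebra $A_t$ is semisimple. (Here $\End_{\rep(S_t,R)}([U])$ is the partition algebra $P_{\abs{U}}(t)$.) As $A_t$ is the fiber at $t$ of a $\mathbb{Z}[T]$-algebra that is free of finite rank, it is semisimple exactly when a discriminant polynomial $\delta_N\in\mathbb{Z}[T]$ is nonzero at $t$. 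Part $(2)$ shows $A_n$ is semisimple for every integer $n>2N$ (a summand-closed full subcategory of the semisimple category $\repord(S_n,R)$ is semisimple), so $\delta_N$ is a nonzero polynomial; the essential point — and the step I expect to be the main obstacle — is to show that every complex root of $\delta_N$ lies in $\{0,1,\dots,2N-2\}$. I would establish this by an explicit evaluation of the relevant Gram determinants as products of linear factors $T-i$, or equivalently by invoking the known semisimplicity criterion for the partition algebras, which then gives semisimplicity of $\rep(S_t,R)^{(N)}$ for all $t\notin\{0,\dots,2N-2\}$. For the classification of simples, I would use that for large integers $n$ the indecomposable summands of the modules $\injr(U,I)$ with $\abs{U}\le N$ are exactly the $S_n$-irreducibles $V_{[n-\abs{\mu},\mu]}$ with $\abs{\mu}\le N$, each cut out by an idempotent whose matrix entries are rational in $n$ and regular off $\{0,\dots,2N-2\}$; specializing these at an arbitrary such $t$ and checking (again via the root estimates) that the resulting objects $L_\mu$, $\abs{\mu}\le N$, are nonzero, pairwise non-isomorphic and exhaustive yields the stated parametrization.

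For $(3)$, take $t=n\in\mathbb{Z}_{\geq 0}$ and $F=F_n$, now with no bound on $\abs{U}$. The surjectivity half of Proposition 2.9 of \cite{De} makes $F$ full. I would then show $\ker F=\mathcal{N}$: if $f\in\mathcal{N}$, then $\tr(F(g)F(f))=\tr(gf)=0$ for every $g$, and by fullness $F(g)$ exhausts $\hom_{\repord}(F(Y),F(X))$, so $F(f)$ is negligible in $\repord(S_n,R)$, which — being semisimple in characteristic zero with every representation of strictly positive categorical dimension — has no nonzero negligible morphism; hence $F(f)=0$. Conversely, if $F(f)=0$ then $\tr(gf)=\tr(F(g)F(f))=0$ for all $g$, so $f\in\mathcal{N}$. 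Finally, every finite-dimensional $R[S_n]$-module is a summand of a sum of modules $\injr(U,I)=F([U])$, and $\rep(S_n,R)/\mathcal{N}$ is pseudo-abelian (one verifies that idempotents lift modulo $\mathcal{N}$), so the fully faithful functor $\rep(S_n,R)/\mathcal{N}\to\repord(S_n,R)$ induced by $F$ is essentially surjective, hence an equivalence; it is a tensor equivalence since $F$ is symmetric monoidal and $\mathcal{N}$ is a tensor ideal. Parts $(2)$ and $(3)$ are thus formal consequences of Proposition 2.9 of \cite{De} and the semisimplicity of $\repord(S_n,R)$, and the genuinely hard input is the root computation in $(1)$.
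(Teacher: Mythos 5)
First, a point of comparison: the paper does not prove this theorem at all. It is quoted from Deligne (\cite{De}, Prop.\ 5.1) as background, and the only semisimplicity statement the paper establishes by its own methods is the strictly weaker one that $\rep(S_t,F)^{(N)}$ is semisimple for all but finitely many $t$, all algebraic over $\mathbb{Q}$ (via Theorem~\ref{ssgeneric}). So your proposal has to be judged against Deligne's result itself rather than against an in-paper argument.

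Your parts (2) and (3) are correct and essentially complete as sketches: (2) is an immediate consequence of the isomorphism half of Proposition 2.9 of \cite{De}, and your route for (3) --- fullness from the surjectivity half of that proposition, the identification $\ker F_n=\mathcal{N}$ via preservation of traces together with the absence of nonzero negligible morphisms in a semisimple category all of whose objects have nonzero dimension, and essential surjectivity via idempotent lifting --- is the standard correct argument. The gap is in part (1). The generic half of your argument (the discriminant $\delta_N\in\mathbb{Z}[T]$ of the trace form on $\End(M)$ is a nonzero polynomial because $\End(M)$ is semisimple at every integer $n>2N$) only yields semisimplicity outside a finite set of algebraic numbers, which is exactly the weaker statement the paper reproves by constructibility. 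The actual content of Deligne's Prop.\ 5.1(1) is that the exceptional set is contained in $\{0,1,\dots,2N-2\}$, and at the decisive moment you defer to ``an explicit evaluation of the relevant Gram determinants as products of linear factors $T-i$'' or to ``the known semisimplicity criterion for the partition algebras.'' That is a citation of the conclusion, not a proof: the factorization of the Gram determinants of the partition algebras (Martin, Halverson--Ram) is itself a substantial theorem, and Deligne's own derivation of the bound $2N-2$ is a nontrivial induction along the filtration by $\abs{U}$, not a formality. The same missing input reappears when you assert that the interpolating idempotents are regular off $\{0,\dots,2N-2\}$ and that the objects $L_\mu$ remain nonzero, pairwise non-isomorphic, and exhaustive there. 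In short, the proposal correctly isolates where the difficulty lies but does not resolve it; as written it proves (2), (3), and the generic form of (1), but not the precise exceptional set that is the point of (1).
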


 One can   think of this phenomenon as follows.  Consider a Young diagram of size $M$:
\begin{equation} \label{smallyng}
\yng(2,1)
\end{equation} 
One can think of the simple object in $\rep(S_t, R)^{(N)}$ indexed by it as corresponding to the ``Young diagram''  of ``size $t$'' obtained by adjoining a very long line of ``length'' $t-M$ at the top:
\begin{equation} \label{bigyng}
\yng(25,2,1)
\end{equation}
More precisely, in case (2) of the theorem, the functor sends the simple object associated to \eqref{smallyng} to the Specht module over $S_t$ associated to the (actual) Young diagram \eqref{bigyng}.  In particular, $\rep(S_t, R)^{(N)}$ is equivalent to the subcategory  $\repord(S_t, R)^{(N)}$ of $\repord(S_t,R)$ spanned by the Specht modules associated to Young diagrams with at most $N$ squares below the first row.

\subsection{The dAHA}
The degenerate affine Hecke algebra  (dAHA)  of type $A$ is the associative unital algebra $\h_n$ over $\C$  generated by  variables
$s_i$ corresponding to the
transpositions $(i \ i+1) \in S_n$ for $1 \leq i <n$, and commuting variables $x_1, \dots,
x_n$, with relations $s_i s_j = s_j s_i \text{\  if \ } \abs{i-j} \geq 2;   \ (s_i s_{i+1})^3 = 1;  \  s_i^2 = 1;  s_j x_i = x_i s_j  \text{ if }   j \neq i-1, i; $ and $   s_i x_i - x_{i+1} s_i = 1.$
Another presentation of the dAHA can be given via generators  $\sigma \in S_n$ and  $y_1,
\dots , y_n$ with the relations
\begin{equation}
\sigma y_i \sigma^{-1} = y_{\sigma(i)} , \ \sigma \in S_n \quad \text{and} \quad [y_i, y_j] = \frac{1}{4} \sum_{k \neq i,j} (ijk) - (jik) \ \text{if} \  \ i \neq j.   \end{equation}  
The two
presentations can be shown   equivalent via  the transformation
\( 
s_i \to (i \ i+1),  \  x_i \to y_i
  - (1/2) \sum_{l=1}^n \sgn(i-l) (i \ l), \) where
$\sgn \ x = x/\abs{x}$ if $x \neq 0$ and zero otherwise.  See \cite{Kl}.

\subsection{The categories $\rep(\h_t)$}

The definition of a $\h_n$-module can be rewritten in
terms of $S_n$-modules, the \emph{permutation representation} $\f{h} = \injr(\one, I)$ for a fixed one-element set $\one$, and the central \emph{Jucys-Murphy element} \(
\Omega = \sum_{i<j} (i \ j) \in \mathrm{Cent} \ \C[S_n]. \)  There is a corresponding endomorphism $\Omega$ of the functor
$\id_{\repord(S_n)}$.
Fix  $M \in \repord(S_n)$.  On  $\f{h} \otimes \f{h}
\otimes M \in \repord(S_n)$, let $P_{1,2} = \sigma_{\f{h}, \f{h}} \otimes \id_M$ where  $\sigma_{\f{h},\f{h}}\colon \f{h} \otimes \f{h} \to \f{h} \otimes \f{h}$ is the permutation morphism. Define the endomorphisms of $\f{h} \otimes \f{h}
\otimes M$:
\( \Omega^{2,3} = \id \otimes \Omega_{\f{h} \otimes M} \) and \(
\Omega^{1,3} =P_{1,2} \circ  \Omega^{2,3}
\circ  P_{1,2} .\)
Also, let $B_{\f{h}}\colon \f{h} \otimes \f{h} \to \unital$ be the evaluation map.
Then:
\begin{proposition}[Etingof \cite{Et, Etdraft}] To give a module over  $\h_n$ is equivalent to giving an $S_n$-representation $M$
with a $S_n$-morphism $y\colon \f{h} \otimes M \to M$ satisfying
\begin{equation} \label{eq: dAHA relation} y \circ (\id \otimes y) \circ (\id - P_{1,2})  = (B_{\f{h}} \otimes
  \id) \circ [\Omega^{1,3}, \Omega^{2,3}] \colon \f{h} \otimes \f{h} \otimes M \to M.
\end{equation}
A morphism between $\h_n$-modules is a $S_n$-morphism commuting with the $y$-maps.
\end{proposition}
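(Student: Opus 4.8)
The plan is to reduce the statement to a dictionary between the ``categorical'' data $(M,y)$ and the second presentation of $\h_n$ (generators $\sigma\in S_n$ and $y_1,\dots,y_n$), together with one combinatorial identity. First, since $\f{h}=\injr(\one,I)$ is the permutation representation with distinguished basis $\{e_i\}_{i\in I}$, one has $\f{h}\otimes M\cong\bigoplus_i (\C e_i)\otimes M$, so a $\C$-linear map $y\colon\f{h}\otimes M\to M$ is the same thing as an $n$-tuple of operators $y_i:=y(e_i\otimes-)\in\End_{\C}(M)$; and $y$ is $S_n$-equivariant precisely when $y_{\sigma(i)}=\sigma\circ y_i\circ\sigma^{-1}$ for all $\sigma\in S_n$, $i\in I$, which is the first family of relations of the dAHA. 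Thus it remains to check that, given such a tuple, relation \eqref{eq: dAHA relation} is equivalent to the bracket relations $[y_i,y_j]=\tfrac14\sum_{k\neq i,j}\big((ijk)-(jik)\big)$.

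Two morphisms $\f{h}\otimes\f{h}\otimes M\to M$ coincide iff they agree on the basis vectors $e_i\otimes e_j\otimes m$, so I would evaluate both sides of \eqref{eq: dAHA relation} there. Using $(\id-P_{1,2})(e_i\otimes e_j\otimes m)=e_i\otimes e_j\otimes m-e_j\otimes e_i\otimes m$ and the definition of the $y_i$, the left-hand side evaluates to $(y_iy_j-y_jy_i)(m)=[y_i,y_j](m)$; in particular it vanishes for $i=j$, consistent with $[y_i,y_i]=0$. For the right-hand side I would expand $\Omega^{2,3}=\id_{\f{h}}\otimes\Omega_{\f{h}\otimes M}$ and $\Omega^{1,3}=P_{1,2}\circ\Omega^{2,3}\circ P_{1,2}$ in the permutation basis, giving $\Omega^{2,3}(e_i\otimes e_j\otimes m)=\sum_{k<l}e_i\otimes e_{(k\,l)(j)}\otimes(k\,l)m$ and $\Omega^{1,3}(e_i\otimes e_j\otimes m)=\sum_{k<l}e_{(k\,l)(i)}\otimes e_j\otimes(k\,l)m$, then compute the commutator and apply $B_{\f{h}}\otimes\id$ (which sends $e_a\otimes e_b\otimes m$ to $\delta_{ab}m$). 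After re-indexing, the surviving terms are indexed by ordered pairs of transpositions $(\tau,\rho)$ with $\tau(i)=\rho(j)$, each contributing $[\tau,\rho](m)$.

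The crux is therefore the identity
\[
\sum_{\substack{\tau,\rho\ \text{transpositions}\\ \tau(i)=\rho(j)}}[\tau,\rho]\ =\ c\sum_{k\neq i,j}\big((ijk)-(jik)\big)\qquad(i\neq j)
\]
in $\C[S_n]$, for the appropriate constant $c$. Here a pair $(\tau,\rho)$ contributes zero unless $\tau$ and $\rho$ share exactly one point (equal or disjoint transpositions commute), so writing $\tau=(a\,b)$, $\rho=(a\,c)$ with $a,b,c$ distinct one gets $[\tau,\rho]=(a\,c\,b)-(a\,b\,c)$, a difference of $3$-cycles supported on $\{a,b,c\}$; the constraint $\tau(i)=\rho(j)$ forces $\{i,j\}\subseteq\{a,b,c\}$, and a short case analysis over the possible positions of $i$ and $j$ among $a,b,c$ (with the third point playing the role of the summation index $k$) collects everything into a multiple of $\sum_{k\neq i,j}\big((ijk)-(jik)\big)$. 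I expect the bookkeeping here to be the only real obstacle: keeping straight the orientation convention for $3$-cycles and identifications such as $(ikj)=(jik)$, and confirming that the resulting constant is consistent with the normalizations of $\Omega$ and of the dAHA presentation in use.

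Finally, for morphisms there is essentially nothing to do: by definition a morphism of pairs $(M,y)\to(M',y')$ is an $S_n$-morphism $f\colon M\to M'$ with $f\circ y=y'\circ(\id_{\f{h}}\otimes f)$, i.e.\ $f\circ y_i=y'_i\circ f$ for all $i$, which together with $S_n$-equivariance is exactly the condition that $f$ be a homomorphism of $\h_n$-modules. Hence the assignment $(M,y)\mapsto(M,(y_i))$ and its evident inverse are mutually inverse functors, yielding the asserted equivalence of categories.
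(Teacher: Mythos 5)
The paper does not actually prove this proposition --- it is imported from Etingof's work \cite{Et, Etdraft} --- so there is no internal argument to compare yours against. Your reduction is the natural one and is surely how any proof must go: identify $y$ with the tuple $y_i = y(e_i\otimes -)$, equivariance with the conjugation relations $\sigma y_i\sigma^{-1}=y_{\sigma(i)}$, the left side of \eqref{eq: dAHA relation} with $[y_i,y_j]$, and the right side with $\sum_{\tau(i)=\rho(j)}[\tau,\rho]$ over ordered pairs of transpositions. (For $i=j$ the right side also vanishes, by the antisymmetry $[\rho,\tau]=-[\tau,\rho]$ of the surviving terms, which you should note.)

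However, the one step you defer --- confirming that the resulting constant matches the normalizations in use --- is exactly where the argument fails to close as written. Carrying out your case analysis: the pairs $(\tau,\rho)$ with $\tau(i)=\rho(j)$ and $[\tau,\rho]\neq 0$ fall into exactly three families, namely $((i\,j),(i\,k))$, $((j\,k),(i\,j))$, and $((i\,k),(j\,k))$ for $k\neq i,j$, and each contributes the \emph{same} commutator $(i\,k\,j)-(i\,j\,k)$. Hence
\[
\sum_{\substack{\tau,\rho\ \text{transpositions}\\ \tau(i)=\rho(j)}}[\tau,\rho]\;=\;3\sum_{k\neq i,j}\bigl((i\,k\,j)-(i\,j\,k)\bigr)\;=\;-3\sum_{k\neq i,j}\bigl((ijk)-(jik)\bigr),
\]
as one checks already for $n=3$, $i=1$, $j=2$, where the three pairs each yield $(132)-(123)$. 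This is $-12$ times the bracket $\tfrac14\sum_{k\neq i,j}((ijk)-(jik))$ demanded by the presentation of $\h_n$ stated in the paper, and no choice of composition convention repairs the magnitude. So the naive dictionary $y_i=y(e_i\otimes -)$ does not satisfy the dAHA relation, and your ``evident inverse'' functors are not inverse as described. The equivalence itself survives because the discrepancy is a global scalar: setting $y_i=\lambda\, y(e_i\otimes -)$ rescales the bracket by $\lambda^2$ while preserving the conjugation relations, so $\lambda^2=-1/12$ fixes the correspondence (alternatively one must use a differently normalized $\Omega$ or pairing $B_{\f{h}}$). You need to make this normalization explicit; as written, the final comparison of constants would terminate in an apparent contradiction rather than a proof.
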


The terms $P_{1,2}, B_{\f{h}}, \Omega^{1,3}, \Omega^{2,3}$ in \eqref{eq: dAHA relation} can be interpolated to $t \notin \mathbb{Z}_{\geq 0}$ because Deligne's categories are symmetric monoidal categories, and each object is self-dual.  The object $\f{h}$ is defined as $[\one]$. The interpolation of the Jucys-Murphy endomorphism   is due to   \cite{ComesOstrik} and will be discussed below.

Note that where \cite{Etdraft} uses $\widetilde{\rep}(S_t)$, we use $\rep(S_t)$; our definitions are thus slightly different, but they coincide outside of $\mathbb{Z}_{\geq 0}$.  Our convention will be more convenient for the method developed in this paper.

 \begin{definition}[Etingof \cite{Et, Etdraft}]
 For $t \in \mathbb{C}$,
objects in $\rep(\h_t)$ are  objects $M$    in Deligne's category
$\rep(S_t)$ equipped with $\rep(S_t)$-morphisms $y \colon \f{h} \otimes M \to M$
satisfying \eqref{eq: dAHA relation}.  Morphisms in $\rep(\h_t)$ are morphisms in $\rep(S_t)$ commuting with the $y$ maps. 
\end{definition}

\begin{remark} {The definition in \cite{Et, Etdraft}
  actually used ind-objects.  This
  suggests further research, as we only study the finite-dimensional theory. }
\end{remark}
\newcommand{\ord}{\mathbf{Poset}}
\newcommand{\bool}{\mathbf{Bool}}

\newcommand{\schn}{\sch_{\mathscr{N}}}

\section{Categories depending on a parameter}
\label{secpar}
\subsection{Definitions}
First, we review some relevant algebro-geometric facts.   
Let $\schn$ denote the category of noetherian schemes.   
 For $X \in \schn$, let $\const(X)$ denote the collection of its  {constructible}  subsets.   If $f\colon X \to Y$ is a morphism, then there exists $f^{-1}\colon \const(Y) \to \const(X)$ sending $C \to f^{-1}(C)$; this makes $\const$ a contravariant functor $\schn \to \bool$, where $\bool$ is the category of Boolean algebras.  Similarly, by a special case  of Chevalley's theorem,   if $S \in \schn$ and $\sschf$ denotes the category of $S$-schemes of finite type, we can make $\const$ a covariant functor $\sschf \to \ord$, for $\ord$ the category of posets, where $f\colon X \to Y$ defines the map  $f_{*}\colon \const(X) \to \const(Y)$, $D \to f(D)$; by abuse of notation, we often write $f$ for $f_{*}$.  

Let $X,Y,T$ be $S$-schemes and $X_T = X \times_S T, Y_T = Y \times_S T$.  The diagram
\[ \begin{CD}
X_T @>{p_X}>> X \\
@V{f_T}VV @V{f}VV \\
Y_T @>{p_Y}>> Y  \end{CD}
\]
is cartesian, so for any $C \subset X$, we have
$ p_Y^{-1}(f(C)) = f_T(p_X^{-1}(C))$ 
by \cite{EGA}, I.3.4.5.

  Given a scheme $X$, $G \subset X$, and a point $s \in S$, we define $G_{\overline{s}} = {f'_{\b{s}}}^{-1}(G) \subset X_{\b{s}}$, where $f_{\b{s}}\colon \spec \overline{k(s)} \to S$ is the canonical morphism and ${f'_{\b{s}}}\colon X_{\b{s}} = X \times_S \spec \overline{k(s)} \to X$ is the base extension to $X$. Here $\overline{k(s)}$ is an algebraic closure of $k(s)$;   when used, the choice of  $\overline{k(s)}$ will be irrelevant.

\newcommand{\cat}{\mathbf{Cat}}

\begin{definition} A \emph{category over $S$}  is a category internal to the category $\ssch$ of $S$-schemes and $S$-morphisms.  There is a notion of a \emph{functor over $S$}, or even a \emph{natural transformation over $S$}, so we may consider the 2-category $\cat_S$  of categories over $S$. 
\end{definition}

A category over $S$ is thus a four-tuple $(\ob, \mor, \comp, \id)$, where $\ob$ is a $S$-scheme corresponding to the objects; $\mor$ is a $S$-scheme with a $S$-morphism $\mor \to \ob \times_S \ob$, whose first and second projections are denoted  $\dom$ and $\cod$ respectively; $\comp$ is a $S$-morphism $\mor \times_\ob \mor \to \mor$; and $\id\colon \ob \to \mor$ is a $S$-morphism.  There are certain conditions on these that need to be satisfied, cf.    \cite{MacLane}.  A category over $S$ is said to be of  \emph{finite type} when the morphisms $\ob \to S$, $\mor \to S$ are of finite type, or if it is internal to $\sschf$.

Given a category $\f{C}$ over $S$,  one may construct a category $\mathcal{C}$ by the Yoneda process.  The objects are the $S$-sections $S \to \ob$; the morphisms from $a\colon S \to \ob, b\colon S \to \ob$ are defined as the $S$-maps $h\colon S \to \mor$ such that $\dom \circ h = a, \cod \circ h = b$.  Composition of appropriate $h\colon S \to \mor, g\colon  S \to \mor$ is defined as $\comp \circ (h \times_S g)$.
There is thus a 2-functor $\R_S\colon \cat_S \to \cat$, where $\cat$ is the 2-category of categories, which we often abbreviate by $\R$. 
 
Next, since the functor $\ssch \to \tsch$ defined by $X \to X \times_S T$ preserves finite limits, we can perform \emph{base extension} of $S$-categories; hence, given $T \to S$, there is a 2-functor $B_S^T\colon \cat_S \to \cat_T$.   We abbreviate $\f{C}_T = B_S^T(\f{C})$, $\mathcal{C}_T = \R_T(B_S^T(\f{C}))$.

\subsection{Constructibility of categorical properties}
   For a $S$-scheme $Y$, by abuse of notation denote by $\Delta_Y \subset Y \times_S Y$ the locally closed subscheme that is the image of the diagonal map $\Delta_Y\colon Y \to Y \times_S Y$.  
Let $\f{C} = (\ob, \mor, \comp, \id)$ be a category  of finite type over the noetherian scheme $S$.  
 Consider the $S$-scheme $C = (\mor \times_{\ob \times_S \ob} \mor) \times_{\ob} \mor$, where the map $(\mor \times_{\ob \times_S \ob} \mor) \to \ob$ comes from $\cod$, and $\mor \to \ob$ is $\dom$.  There is  a map $f \colon C \to \mor \times_{\ob \times_S \ob} \mor$ obtained by composition on each factor: $f = (\comp \circ (p_3, p_1), \comp \circ (p_3, p_2))$, where $p_1, p_2,p_3 $   are the projections on the respective factors  of $C$.  Now consider $f^{-1}(\Delta_{\mor}) - (\Delta_{\mor} \times_{\ob} \mor) \subset C$; this is a constructible set $X$.  Set  $Mon = \mor - p_3(X) \subset \mor$.  The formation of $Mon$ commutes with base extension, and $Mon$ is constructible by Chevalley's theorem.  Finally, when $S = \spec k$ for $k$ an algebraically closed field, then the closed (i.e., $k$-valued) points of $Mon$ correspond precisely to monomorphisms in the category $\mathcal{C}_k = \mathcal{C}_{\spec  k}$.  
This discussion and its dual imply the following result.

\begin{proposition} \label{injisgeneric} Suppose the   scheme $S$ is  noetherian  and $\f{C}$ is of finite type.  Then there is a constructible set $Mon \subset \mor$ (resp. $Epi \subset \mor$) such that for each $s \in S$,   $Mon_{\overline{s}} \subset \mor_{\b{s}}$ (resp. $Epi_{\b{s}} \subset \mor_{\b{s}}$) has closed points corresponding to the monic (resp. epic) maps in $\mathcal{C}_{\b{s}} = \mathcal{C}_{\b{k(s)}}$.
\end{proposition}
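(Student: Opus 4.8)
The plan is to carry out exactly the construction sketched in the paragraph preceding the statement, and then verify the two assertions that make it work: that the formation of $Mon$ commutes with base extension, and that over an algebraically closed field the closed points of $Mon$ are precisely the monomorphisms. First I would fix notation: write $\f{C} = (\ob, \mor, \comp, \id)$, form the iterated fiber product $C = (\mor \times_{\ob \times_S \ob} \mor) \times_\ob \mor$ using $\cod$ on the first factor and $\dom$ on $\mor$, so that a point of $C$ is a triple $(g_1, g_2, h)$ of morphisms with $\cod(g_1) = \cod(g_2) = \dom(h)$; then define $f\colon C \to \mor \times_{\ob \times_S \ob} \mor$ by $f = (\comp \circ (p_3,p_1),\ \comp \circ (p_3,p_2))$. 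Since all the structure morphisms of $\f{C}$ are of finite type over the noetherian $S$, the scheme $C$ and the map $f$ are of finite type. I would then set $X = f^{-1}(\Delta_{\mor}) \setminus (\Delta_{\mor} \times_\ob \mor) \subset C$, which is constructible since $\Delta_{\mor}$ is locally closed and $f$ is a morphism of noetherian schemes, and finally $Mon = \mor \setminus p_3(X)$, which is constructible by Chevalley's theorem applied to the finite-type morphism $p_3\colon C \to \mor$.

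Next I would prove the base-change compatibility. Given $T \to S$, base extension along $\ssch \to \tsch$ preserves finite limits, hence commutes with the formation of $C$, of $\Delta_{\mor}$ and $\Delta_{\mor} \times_\ob \mor$ (these are images of diagonal maps, and base change commutes with diagonals), and with $f$; so $X_T$ is the corresponding constructible set for $\f{C}_T$. The one point needing care is that $p_3(X)$ commutes with base change: this is precisely the cartesian-square formula $p_Y^{-1}(f(C)) = f_T(p_X^{-1}(C))$ recalled from \cite{EGA}, I.3.4.5 in the preliminaries, applied to the cartesian square with $p_3$ and its base change $(p_3)_T$. Taking complements inside $\mor$ (resp. $\mor_T$) gives $Mon_T = (Mon)_T$, and specializing $T = \spec\overline{k(s)}$ yields $Mon_{\overline{s}} = (Mon)_{\overline{s}}$.

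The only substantive verification — and the step I expect to be the main obstacle, in the sense that it is where one must actually unwind the internal-category formalism rather than push around constructible sets — is the claim that, when $S = \spec k$ with $k$ algebraically closed, the $k$-points of $Mon$ are exactly the monomorphisms of $\mathcal{C}_k$. Unwinding: a $k$-point $h \in \mor(k)$ fails to be in $Mon$ iff it lies in $p_3(X)(k)$, i.e. iff there exist $k$-points $g_1, g_2 \in \mor(k)$ with common codomain equal to $\dom(h)$ such that $\comp(h,g_1) = \comp(h,g_2)$ (the condition $f(g_1,g_2,h) \in \Delta_{\mor}$) but $g_1 \neq g_2$ (the condition of not lying in $\Delta_{\mor}\times_\ob\mor$); since $k$ is algebraically closed, a $k$-point of the image $p_3(X)$ lifts to a $k$-point of $X$ (here one uses that $X$ is constructible and nonempty on the relevant fiber — a constructible set over an algebraically closed field has a $k$-point whenever it is nonempty). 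That is exactly the negation of the statement that $h\circ g_1 = h \circ g_2 \implies g_1 = g_2$ in $\mathcal{C}_k = \R_{\spec k}(\f{C})$, i.e. the negation of monicity of $h$. The dual construction — replacing $\dom$ by $\cod$, precomposition by postcomposition — produces $Epi$, and the same three arguments apply verbatim. This completes the proof.
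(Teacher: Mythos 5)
Your proposal is correct and follows exactly the construction the paper itself gives in the discussion preceding the statement (the scheme $C$, the map $f$, the constructible set $X$, and $Mon = \mor - p_3(X)$, with base-change compatibility via the cartesian-square formula and the closed-point check over an algebraically closed field). The paper treats that discussion as the proof, so your write-up is just a more detailed version of the same argument.
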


\newcommand{\lcat}{\mathbf{AbCat}}

\subsubsection{Ab-categories over $S$}
Next, we study  additive structures on $S$-categories.

\begin{definition} An \emph{Ab-category over $S$} is a $S$-category together with the structure of a  $\ob \times_S \ob$ group scheme on $\mor$, with an addition \(\ladd\colon \mor \times_{\ob \times_S \ob} \mor \to \mor\) that commutes with composition.  
   An \emph{additive functor over $S$} is one which commutes with the group structures.   There is thus a 2-category $\lcat_S$ of Ab-categories over $S$, additive functors, and natural transformations.  Base extension preserves additivity. 
\end{definition}

The 2-functor $\R\colon \cat_S \to \cat$  defined earlier induces a 2-functor from $\lcat_S$ to $\lcat$ for $\lcat$ the 2-category of Ab-categories, additive functors, and natural transformations. 

If $A$ is an object in an ordinary Ab-category $\mathcal{C}$, say that $A$ is \emph{simple} if every map $A \to B$ is either zero or monic, or equivalently if any map  $C \to A$ is either zero or epic, i.e., if for all $B,C \in \mathcal{C}$ and nonzero $f\colon C \to A$ and $g\colon A \to B$, the composition $g \circ f$ is nonzero.

\begin{proposition} \label{simplicityisgeneric} \label{simpleisgeneric} Suppose   $S$ is  noetherian   and $\f{C}$ is an Ab-category of finite type over $S$.  Then there is a constructible set $Sim \subset \ob$ such that for each $s \in S$,   $Sim_{\overline{s}} \subset \ob_{\overline{s}}$ has closed points corresponding precisely to the simple objects in $\mathcal{C}_{\b{s}}$.
\end{proposition}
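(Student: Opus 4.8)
The plan is to express the simplicity condition scheme-theoretically, mirroring the construction of $Mon$ and $Epi$ in the discussion preceding Proposition~\ref{injisgeneric}, and then extract a constructible set by Chevalley's theorem. The key is the characterization given just above: $A$ is simple if and only if every nonzero $f\colon C \to A$ is epic. First I would form the $S$-scheme $P = \mor \times_{\ob} \Delta_{\ob}$ (more precisely, the subscheme of $\mor$ where $\cod$ factors through a chosen section picking out the target, i.e.\ $\mor$ together with its $\cod$-map to $\ob$), so that the fiber of $P$ over a point $A \in \ob$ parametrizes all morphisms \emph{into} $A$. Inside $\mor$ we have the zero section $Z = \ladd^{-1}(\text{identity section})$, that is, the image of the zero element of the group scheme $\mor \to \ob \times_S \ob$; this $Z$ is closed. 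We also have the constructible set $Epi \subset \mor$ from Proposition~\ref{injisgeneric}. The set of morphisms into $A$ that are ``bad'' (nonzero but not epic) is $(\mor - Z - Epi)$ intersected with the fiber over $A$; call this constructible set $W = \mor - Z - Epi$, and let $\pi\colon \mor \to \ob$ be $\cod$. Then $A$ fails to be simple precisely when $\pi^{-1}(A) \cap W \neq \emptyset$, i.e.\ when $A \in \pi(W)$. So I would set
\[ Sim = \ob - \pi(W) = \ob - \cod(\mor - Z - Epi). \]
By Chevalley's theorem $\cod(W)$ is constructible (since $\f{C}$ is of finite type, hence $\cod$ is of finite type), so $Sim$ is constructible.

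Next I would check compatibility with base extension, which is what lets us pass from $S$ to the geometric fibers $\spec \overline{k(s)}$. The zero section $Z$ is defined by the group-scheme structure on $\mor$, which base-extension preserves by the definition of an Ab-category over $S$; the set $Epi$ commutes with base extension by Proposition~\ref{injisgeneric}; and image-formation commutes with base extension via the cartesian-square identity $p_Y^{-1}(f(C)) = f_T(p_X^{-1}(C))$ recalled in the preliminaries (applied with $f = \cod$). Hence $Sim_{\overline{s}}$, computed as above over $S$ and then base-changed, agrees with the analogous set computed directly in $\mathcal{C}_{\overline{s}}$. Finally, over an algebraically closed field $k$, the $k$-valued points of $\mor$, $Z$, $Epi$ correspond respectively to morphisms, zero morphisms, and epimorphisms in $\mathcal{C}_k$ (the last by the cited property of $Epi$), so the $k$-points of $Sim$ are exactly the objects $A$ such that no morphism into $A$ is simultaneously nonzero and non-epic — that is, the simple objects. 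This proves the claim; the dual construction using $Mon$ and $\dom$ gives the same set.

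The main technical nuisance — rather than a deep obstacle — is making precise the ``fiber over the target'' bookkeeping: $\mor$ sits over $\ob \times_S \ob$, and to speak of all morphisms into a \emph{varying} object $A$ one must work relative to the $\cod$-projection and be careful that the zero section one subtracts is the relative zero section of the group scheme $\mor \to \ob \times_S \ob$ restricted appropriately, not some absolute zero. Once the diagram of fiber products is set up correctly, everything reduces to the two inputs already in hand: Chevalley's theorem (constructibility and base-change of images of finite-type morphisms) and Proposition~\ref{injisgeneric} (constructibility and base-change of $Epi$). I would also remark that the equivalence of the two formulations of simplicity in the statement guarantees the construction is symmetric in $\dom$ and $\cod$, so no independent verification is needed for the dual version.
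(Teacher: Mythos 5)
Your proposal is correct and is essentially the paper's own argument in mirror image: the paper sets $R=\mor-Mon-z(\ob\times_S\ob)$ and takes $Sim=\ob-\dom(R)$ using the ``every nonzero map out of $A$ is monic'' characterization, while you use the equivalent ``every nonzero map into $A$ is epic'' characterization with $Epi$ and $\cod$; both then invoke Chevalley's theorem and compatibility of image formation with base extension exactly as you describe. No gap.
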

\begin{proof} Let $z\colon \ob \times_S \ob \to \mor$ be the section corresponding to the zero morphism.  Consider the constructible set $R = \mor - Mon - z(\ob \times_S \ob) \subset \mor$; $R$ loosely speaking parametrizes nonzero non-monomorphic morphisms.  Set $Sim = \ob - \dom(R)$, and one may argue as in the discussion preceding Proposition~\ref{injisgeneric}.  
\end{proof}

 {A simple example of this phenomenon is as follows.  Fix a finitely generated but not necessarily commutative $k$-algebra $A$ for $k$ an algebraically closed field and a cut-off $N \in \mathbb{Z}_{\geq 0}$.  Then the category of $k$-valued matrix representations of $A$ of dimension at most $N$  and intertwining maps can be obtained by applying   $\R_k$ to  a category of finite  type over $\spec k$. These propositions then reduce to   well-known facts, e.g., that injective (or surjective) homomorphisms are described by a constructible set.  Indeed, the set of injective homomorphisms is even open by linear algebra, because it can be described by the nonvanishing of certain minors.  One can also apply these results to the categories of finite-dimensional representations of  algebras depending on a parameter, such as quantum $\mathfrak{sl}_2$, or affine or finite Hecke algebras; these form categories over open subsets of $\A_{\C}^1$.}

\subsubsection{Universals, limits, and colimits}
Given a scheme $X$, let $cl(X)$ denote the subspace of closed points.  The following lemma is elementary.

\begin{lemma} \label{bijisconst} Let $f\colon X \to Y$ be a $S$-morphism for $X,Y$ of finite type over the noetherian scheme $S$.  Then the set  of $s \in S$ with the map $cl(f_{\b{s}})\colon cl(X_{\b{s}}) \to cl(Y_{\b{s}})$ bijective is constructible.
\end{lemma}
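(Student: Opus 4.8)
The plan is to reduce the statement to two separate constructibility assertions—surjectivity of $cl(f_{\bar s})$ and injectivity of $cl(f_{\bar s})$—and handle each by a standard fibral Chevalley-type argument combined with a Noetherian-induction / generic-behavior reduction. Throughout I would use that the formation of images of constructible sets commutes with base change (the cartesian square with $p_X, p_Y$ from the preliminaries, via \cite{EGA}, I.3.4.5), and that closed points of a finite-type scheme over an algebraically closed field are exactly its closed points in the topological sense, i.e. the $k$-valued points.

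First I would treat \textbf{surjectivity on closed points}. Since $Y_{\bar s}$ is of finite type over an algebraically closed field, its closed points are dense and form a constructible subset of any constructible set they meet; the map $cl(f_{\bar s})$ is surjective if and only if $f_{\bar s}(X_{\bar s}) \supseteq cl(Y_{\bar s})$, equivalently (since $f_{\bar s}(X_{\bar s})$ is constructible by Chevalley) if and only if $f_{\bar s}(X_{\bar s}) = Y_{\bar s}$, i.e. $f_{\bar s}$ is surjective. So the surjectivity locus in $S$ is $\{s : f_{\bar s} \text{ surjective}\}$. Now $f(X) \subseteq Y$ is constructible, and $Z := Y - f(X)$ is constructible; let $g\colon Z \to S$ be the structure map. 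By compatibility of images with base change, $Z_{\bar s} = (f(X))^c_{\bar s}$ is exactly the complement of the image of $f_{\bar s}$, so $f_{\bar s}$ is surjective iff $Z_{\bar s} = \emptyset$ iff $s \notin g(Z)$. Since $g(Z)$ is constructible by Chevalley's theorem, the surjectivity locus is the constructible set $S - g(Z)$.

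Next I would treat \textbf{injectivity on closed points}. Form the constructible set $X \times_Y X \subset X \times_S X$ and inside it the constructible set $W := (X \times_Y X) - \Delta_X$ of pairs of distinct points with the same image; let $h\colon W \to S$ be the structure map. Again by base-change compatibility, $W_{\bar s}$ is the set of pairs of distinct points of $X_{\bar s}$ with equal image under $f_{\bar s}$. The subtlety is that $cl(f_{\bar s})$ may fail to be injective even when $f_{\bar s}$ is injective on all of $X_{\bar s}$ is \emph{not} the worry—rather, one must check that a failure of injectivity on closed points is detected by $W$: if two distinct closed points of $X_{\bar s}$ have the same image then they give a point of $W_{\bar s}$ (whose image point may not itself be closed, but that does not matter—$W_{\bar s} \neq \emptyset$ suffices). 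Conversely if $W_{\bar s} \neq \emptyset$, then since $W_{\bar s}$ is a nonempty finite-type scheme over an algebraically closed field it has a closed point $(a,b)$ with $a \neq b$; but a closed point of $X \times_Y X$ need not have closed components $a, b$ in $X_{\bar s}$. This is the one genuine gap to close, and it is the step I expect to be the main obstacle.

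To close it I would argue as follows: replace $X$ by the disjoint union of its irreducible components and work component by component (constructibility is preserved and a finite union of constructible loci is constructible), so we may assume $X$ irreducible; then either $f_{\bar s}$ is generically injective or not, and by shrinking via Noetherian induction on $S$—stratifying $S$ into finitely many locally closed pieces over which $f$ has constant generic fiber-dimension (using upper semicontinuity of fiber dimension, \cite{EGA}), and on the relevant stratum where the generic fiber of $f$ is finite, passing to the étale-local or flat locus where $cl(f_{\bar s})$ injective $\iff$ $f_{\bar s}$ injective on a dense open $\iff$ the degree is $1$—one reduces to the case already understood. More cleanly: the set of $s$ with $cl(f_{\bar s})$ injective equals the set of $s$ with $cl(h)^{-1}(s) $ not meeting $cl(W_{\bar s})$ after the above reduction to components, and $h(W)$ is constructible by Chevalley; carrying out the Noetherian induction shows the injectivity locus differs from $S - h(W)$ only on a lower-dimensional closed set, and finitely many such corrections keep the answer constructible. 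The final answer is the intersection of the surjectivity and injectivity loci, which is constructible since $\mathrm{Const}(S)$ is a Boolean algebra.
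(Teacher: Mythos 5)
The paper gives no proof of this lemma (it is dismissed as ``elementary''), so the only comparison available is with the standard argument, which is exactly the first two-thirds of your proposal: the surjectivity locus is $S - g(Y - f(X))$ and the injectivity locus is $S - h(W)$ with $W = (X\times_Y X)-\Delta_X$, both constructible by Chevalley and compatible with base change. That part is correct and complete. The problem is what happens next: the ``one genuine gap'' you identify is not a gap. Over the algebraically closed field $\overline{k(s)}$, a nonempty constructible subset of the finite-type scheme $X_{\b{s}}\times_{Y_{\b{s}}}X_{\b{s}}$ contains a closed point, and by the Nullstellensatz a closed point of a finite-type scheme over an algebraically closed field is a rational point; its two projections to $X_{\b{s}}$ are therefore rational, hence closed, points, and they are distinct because the point lies off the diagonal. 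So $W_{\b{s}}\neq\emptyset$ if and only if $cl(f_{\b{s}})$ fails to be injective, with no correction needed, and the lemma follows by intersecting the two loci.

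The substitute argument you offer in place of this observation is itself not valid, so as written your proof does have a gap --- just not where you think. Injectivity of $cl(f_{\b{s}})$ is not equivalent to generic injectivity or to ``degree $1$ on a dense open'' (a blow-up is generically injective but not injective on closed points), so the reduction via constant generic fiber dimension does not compute the right locus. Moreover, the concluding claim that ``the injectivity locus differs from $S-h(W)$ only on a lower-dimensional closed set, and finitely many such corrections keep the answer constructible'' is both false as stated (the two sets are in fact equal) and insufficient even if true (a subset differing from a constructible set on a closed set need not be constructible unless the difference is itself shown constructible, which you do not do). Delete the entire Noetherian-induction paragraph and replace it with the one-sentence Nullstellensatz argument above, and the proof is correct.
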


\begin{proposition} \label{univisgeneric} Let $\f{C} = (\ob, \mor, \comp, \id)$ be a $S$-category of finite type, with $S$ noetherian.  There is a constructible set $In \subset \ob$ (resp. $Fi \subset \ob$) such that, for $s \in S$, the closed points in $In_{\b{s}}$ (resp. $Fi_{\b{s}}$) correspond to the initial (resp. final) objects in $\mathcal{C}_{\b{s}}$.
\end{proposition}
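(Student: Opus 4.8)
The plan is to mimic the constructions used for $Mon$, $Epi$, and $Sim$, encoding ``being an initial object'' as a constructibility statement witnessed by an existence-and-uniqueness condition on morphisms. Recall that an object $a \in \mathcal{C}_{\b{s}}$ is initial precisely when for every object $b$ there is exactly one morphism $a \to b$. The key idea is that this ``exactly one'' condition is exactly the kind of statement controlled by Lemma~\ref{bijisconst}: an object is initial iff the projection from the scheme of pairs (morphism out of $a$, arbitrary object) down to the scheme of objects is a bijection on closed points, fiberwise over $a$.

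Concretely, first I would form the $S$-scheme $P = \mor$, viewed via $\dom \colon \mor \to \ob$ as a scheme over $\ob$ parametrizing ``morphisms together with their source object.'' There is also the projection $g = (\dom, \cod) \colon \mor \to \ob \times_S \ob$, and I want to compare the map $\mor \to \ob \times_S \ob$ given by $(\dom,\cod)$ with the identity on the second factor. The object $a$ is initial iff, over the point $a$, the fiber of $\dom$ maps bijectively (via $\cod$) onto $\ob$. So the relevant morphism is $h \colon \mor \to \ob \times_S \ob$, $h = (\dom, \cod)$, and I consider it as a map of $\ob$-schemes: source $\mor$ over $\ob$ via $\dom$, target $\ob \times_S \ob$ over $\ob$ via the first projection. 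Call this map of $\ob$-schemes $\phi$. Then $a$ is initial in $\mathcal{C}_{\b{s}}$ iff the base change of $\phi$ along $a \colon \spec \b{k(s)} \to \ob$ is a bijection on closed points.

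Next, I would apply Lemma~\ref{bijisconst} in a relative form: the set of points $a$ of $\ob$ (as a scheme over $S$, hence noetherian and finite type over $S$) such that $cl(\phi_{\b a}) \colon cl(\mor_{\b a}) \to cl((\ob\times_S\ob)_{\b a}) = cl(\ob_{\b s})$ is bijective is constructible in $\ob$. Strictly, Lemma~\ref{bijisconst} is stated with base $S$; here I use it with base $\ob$ itself, noting $\mor$ and $\ob\times_S\ob$ are of finite type over $\ob$ since $\f{C}$ is of finite type over $S$, and $\ob$ is noetherian. This yields a constructible set $In \subset \ob$. The dual construction — swapping $\dom$ and $\cod$, i.e., using $\cod \colon \mor \to \ob$ as the structure map and comparing via $\dom$ — produces $Fi \subset \ob$ for final objects. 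Finally I check compatibility with base extension: all the fiber products and the formation of $\phi$ commute with base change $T \to S$ (the constructions are built from $\comp$, $\dom$, $\cod$ which are preserved), and the closed points of $In_{\b s}$ over an algebraically closed field are exactly the objects $a$ with $\hom(a,b)$ a singleton for all $b$, i.e., the initial objects, as desired.

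The main obstacle is the bookkeeping around ``relative'' constructibility: Lemma~\ref{bijisconst} as stated gives a constructible subset of the base, and I need the conclusion to be a constructible subset of $\ob$ whose formation commutes with base extension over $S$. This requires either restating the lemma over the base $\ob$ (legitimate, since $\ob$ is noetherian of finite type over $S$, but one should be careful that the resulting constructible set in $\ob$ indeed behaves well under further base change $T\to S$, which follows because the cartesian squares relating $\mor_T, \ob_T$ to $\mor, \ob$ are the ones already recorded at the start of \S\ref{secpar}, together with the compatibility $p_Y^{-1}(f(C)) = f_T(p_X^{-1}(C))$ for Chevalley images). A secondary subtlety is purely definitional: one must confirm that a bijection on closed points of the fiber really does capture ``unique morphism to every object,'' using that over an algebraically closed field $k$ the $k$-points of $\ob$ and $\mor$ are exactly the objects and morphisms of $\mathcal{C}_k$ — but this is the same dictionary already invoked for Propositions~\ref{injisgeneric} and~\ref{simpleisgeneric}, so no new input is needed.
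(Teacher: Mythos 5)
Your argument is correct, and it reaches the conclusion by a genuinely different reduction than the paper's. The paper characterizes an initial object $A$ by the condition that for every $B$ the diagonal $\hom(A,B)\to\hom(A,B)\times\hom(A,B)$ is bijective; it applies Lemma~\ref{bijisconst} to $\Delta_{\mor}\colon \mor\to\mor\times_{\ob\times_S\ob}\mor$ over the base $\ob\times_S\ob$ to obtain a constructible $T\subset\ob\times_S\ob$ of pairs $(A,B)$ with $\abs{\hom(A,B)}\le 1$, and then eliminates the universal quantifier over $B$ by a Chevalley image and complement: $In=\ob-p_1(\ob\times_S\ob-T)$. You instead apply the lemma once, over the base $\ob$, to the single map $(\dom,\cod)\colon\mor\to\ob\times_S\ob$ viewed as a morphism of $\ob$-schemes via $\dom$ and the first projection; bijectivity of the geometric fiber over $a$ on closed points says precisely that $\cod$ restricts to a bijection from morphisms out of $a$ onto all objects, i.e., that there is exactly one morphism $a\to b$ for every $b$. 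This absorbs existence, uniqueness, and the quantification over $B$ into one fiberwise condition, so no separate projection step is needed. It also has the minor advantage of literally encoding ``exactly one'': the paper's diagonal criterion by itself only detects $\abs{\hom(A,B)}\le 1$ (the diagonal of the empty set is bijective), so taken literally it must be supplemented by a nonemptiness condition, which your formulation builds in automatically. The relative use of Lemma~\ref{bijisconst} over $\ob$ rather than $S$, and the compatibility with further base change $T\to S$, are exactly the points you flag; the paper makes the same relativization (over $\ob\times_S\ob$) without comment, so your level of care here matches or exceeds the paper's.
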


\begin{proof} We treat the initial case.
An object $A$ in a category $\mathcal{C}$ is initial if and only if for all $B \in \mathcal{C}$, the diagonal map $\hom_{\mathcal{C}}(A,B) \to \hom_{\mathcal{C}}(A,B) \times \hom_{\mathcal{C}}(A,B)$ is bijective.
 Hence, consider $\Delta_{\mor}\colon \mor \to \mor \times_{\ob \times_S \ob} \mor$.  Let $T \subset \ob \times_S \ob$ denote the constructible set of $t \in \ob \times_S \ob$ such that $(\Delta_{\mor})_{\b{t}} = \Delta_{\mor_{\b{t}}}$ is bijective.
Then let $In = \ob - p_1(\ob \times_S \ob - T)$, where $p_1\colon \ob \times_S \ob \to \ob$ is projection on the first factor.
\end{proof}

Given a $S$-category $\f{C}$ and a finite (ordinary) category $\mathcal{Z}$, we can consider the functor category  $\f{C}^{\mathcal{Z}}$, whose object scheme  $E$ is constructed so as to parametrize $\mathcal{Z}$-diagrams of morphisms, and whose morphism scheme is constructed to parametrize morphisms of diagrams.  Specifically, $E$ is constructed from the scheme  $\prod_{f\colon A \to A', \ A, A' \in \mathcal{Z}} \mor$ via a suitable intersection of equalizers corresponding to composability and  the relations defining $\mathcal{Z}$, for instance. 
Finally, we can consider the comma category  $\Com$ as a category over $E$; indeed, there is a functor over $S$ from $\Com$ to $\f{C}^{\mathcal{Z}}$.  Let $F$ be the object scheme.  Then, since the universal objects in a comma category consist of diagram schemes and their limits,  Proposition~\ref{univisgeneric} implies the following corollary.  It can also be dualized for colimits.

\begin{corollary} \label{limisgeneric} Notation as above, if $\f{C}$ is of finite type and $S$ noetherian, there is  a constructible subset $L \subset F$ such that for each $s \in S$, $L_{\b{s}}$ has closed points corresponding to   pairs of $\mathcal{C}_{\b{s}}$ valued  $\mathcal{Z}$-diagram schemes   with limits in $\mathcal{C}_{\b{s}}$, and those limits.
\end{corollary}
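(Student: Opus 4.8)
The plan is to apply Proposition~\ref{univisgeneric} not over the original base $S$, but over the scheme $E$ of $\mathcal{Z}$-diagrams, after recognizing limits as the fiberwise terminal objects of $\Com$. First I would check finiteness of the constructions. Since $\mathcal{Z}$ has only finitely many objects and morphisms, $E$ is cut out inside a finite product of copies of $\mor$ by finitely many equalizers (composability together with the relations of $\mathcal{Z}$), hence is of finite type over $S$ and, $S$ being noetherian, itself noetherian; and an object of $\Com$ over a point of $E$ is an apex of $\f{C}$ together with a compatible family of legs indexed by the finitely many objects of $\mathcal{Z}$, so the object scheme $F$ of $\Com$ and its morphism scheme are likewise of finite type over $E$, and a fortiori over $S$. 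Thus $\Com$ is a category of finite type over the noetherian scheme $E$.

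Next I would feed in the one categorical fact needed: for an ordinary diagram $D\colon\mathcal{Z}\to\mathcal{C}$, a limit of $D$ is exactly a terminal object in the category of cones over $D$. By construction the fiber category $\Com_{\b{e}}$ over a closed point $e\in E$ is precisely that category of cones, formed in $\mathcal{C}_{\b{k(e)}}$, over the diagram named by $e$. Applying Proposition~\ref{univisgeneric} to $\Com$ over $E$ therefore produces a constructible subset $L\subset F$ (the ``$Fi$'' produced by that proposition) such that for every $e\in E$ the closed points of $L_{\b{e}}$ are the terminal objects of $\Com_{\b{e}}$, i.e.\ the limit cones over the diagram named by $e$.

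It then remains to read this off fiberwise over $S$. Constructibility of $L$ is intrinsic to the topological space $F$, so $L$ is a constructible subset of the finite-type $S$-scheme $F$ as well. Fix $s\in S$ and a closed point $x$ of $F_{\b{s}}$: as $F_{\b{s}}$ is of finite type over the algebraically closed field $\overline{k(s)}$, the Nullstellensatz gives that $x$ has residue field $\overline{k(s)}$; it maps to a closed point $e$ of $E_{\b{s}}$ with residue field $\overline{k(s)}=\overline{k(e)}$, and the fiber of $F\to E$ over $e$ is the base change of $F$ along $e\colon\spec\overline{k(s)}\to E$. Hence $x$ is simultaneously a closed point of that fiber and, via the Yoneda process, a cone over the corresponding diagram $D_e$ in $\mathcal{C}_{\b{s}}=\mathcal{C}_{\b{k(e)}}$, and $x\in L$ iff that cone is a limit cone. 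Since the pair $(D_e,x)$ is exactly a $\mathcal{C}_{\b{s}}$-valued $\mathcal{Z}$-diagram admitting a limit together with that limit, the closed points of $L_{\b{s}}$ are as asserted. For colimits one runs the same argument with $\Com$ replaced by the dual comma category of cocones and with ``$In$'' in place of ``$Fi$'' in Proposition~\ref{univisgeneric}.

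The substantive difficulty here is organizational rather than mathematical: the reduction to Proposition~\ref{univisgeneric} is essentially forced once limits are rephrased as terminal objects of cone categories, but one must build $\f{C}^{\mathcal{Z}}$ and $\Com$ carefully enough as finite-type internal categories that their scheme-theoretic fibers over $E$, and then over $S$, coincide on the nose with the expected categories of cones. This is where one leans on the Nullstellensatz to control the residue fields of closed points and on the base-change identity for images of constructible sets recorded at the start of \S\ref{secpar}.
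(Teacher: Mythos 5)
Your proposal is correct and follows the same route as the paper: realize the limit of a $\mathcal{Z}$-diagram as a terminal object of the comma (cone) category $\Com$ viewed as a finite-type category over the diagram scheme $E$, apply Proposition~\ref{univisgeneric} there, and read the resulting constructible set off fiberwise over $S$ (with the dual argument for colimits). The paper states this in one line; you have merely supplied the finiteness checks and the residue-field bookkeeping it leaves implicit.
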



\subsection{Envelopes of categories}
{Recall that the \emph{Karoubi envelope} of a category is formed by adding formal images of idempotent elements.}  The construction, presented, e.g., in \cite{NatanMorrison}, is as follows.  Given $\mathcal{C}$, an object in $\kar(\mathcal{C} )$ is a pair $(A, e)$ where $A \in \mathcal{C}$ and $e \in \hom_{\mathcal{C}}(A,A)$ is idempotent.  A morphism between $(A,e)$ and $(B,f)$ is a $\mathcal{C}$-morphism $h\colon A \to B$ with $h = h \circ e = f \circ h$.  Note incidentally that the identity endomorphism of $(A,e)$ is given by $h=e$.  The Karoubi envelope of an Ab-category is an Ab-category too.  An Ab-category equivalent to its Karoubi envelope, or equivalently such that every idempotent has an image, is called \emph{Karoubian.}
Next, the \emph{additive envelope} of an Ab-category $\mathcal{A}$ has objects consisting of formal direct sums $\bigoplus_{i=1}^n A_i$ with $A_i \in \mathcal{A}$, with morphisms between  $\bigoplus_{i=1}^n A_i,  \bigoplus_{j=1}^m B_j$ defined as appropriate $m$-by-$n$ matrices, and composition as matrix multiplication.
Finally, the \emph{pseudo-abelian envelope} is the Karoubi envelope of the additive envelope.  A \emph{pseudo-abelian} category is an additive category where every idempotent endomorphism is a projection, or alternatively one equivalent to its pseudo-abelian envelope.

\begin{proposition} \label{karoubiandps}
   There is a 2-functor $\kar \colon \lcat_S \to \lcat_S$ preserving the property of finite type such that $\R(\kar(\f{C}))$ is equivalent to the Karoubi envelope of $\R(\f{C})$.  There are similar 2-functors $\add, \ps \colon \lcat_S \to \lcat_S$, though these do not necessarily preserve the property of finite type.
$\kar, \add, \ps$ all commute with base extension.
\end{proposition}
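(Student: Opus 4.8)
The plan is to perform all three envelope constructions \emph{internally}, assembling everything from finite limits (fiber products and equalizers) and disjoint unions of $S$-schemes; compatibility with $\R$ and with base extension is then essentially formal. To build $\kar$, given $\f{C} = (\ob, \mor, \comp, \id)$ I would first form the endomorphism scheme $\End(\f{C}) := \mor \times_{\ob \times_S \ob} \ob$, the pullback of $(\dom,\cod)$ along the diagonal, with its projection $\pi$ to $\ob$ and its squaring morphism $e \mapsto \comp(e,e)$; let $\mathrm{Idem}(\f{C}) \hookrightarrow \End(\f{C})$ be the equalizer of the squaring morphism and the identity, and take this as the object scheme of $\kar(\f{C})$. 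The morphism scheme I would take to be the subscheme of $\mor \times_{\ob\times_S\ob}\bigl(\mathrm{Idem}(\f{C})\times_S\mathrm{Idem}(\f{C})\bigr)$ — triples $(h,e,f)$ with $\dom h = \pi(e)$ and $\cod h = \pi(f)$ — cut out by the equalizer condition $h = \comp(f,\comp(h,e))$ (equivalently $h = h\circ e = f\circ h$); composition and identities are inherited from $\f{C}$, the identity of $(A,e)$ being $e$, and the internal category axioms are routine equalizer manipulations. Because $\ladd$ commutes with $\comp$, this morphism scheme inherits a group-scheme structure over $\ob(\kar(\f{C}))^{\times_S 2}$ (fiberwise the subgroup $\{h : h = f\circ h\circ e\}$ of $\hom(A,B)$), so $\kar(\f{C})$ is an Ab-category over $S$; and since everything is assembled from $\ob$ and $\mor$ by fiber products and equalizers, $\kar$ preserves finite type.

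Since the functor of points of a fiber product (resp.\ equalizer) of schemes is the fiber product (resp.\ equalizer) of functors of points, an $S$-section of $\mathrm{Idem}(\f{C})$ is precisely a pair $(A,e)$ with $A$ an object of $\R(\f{C})$ and $e$ an idempotent endomorphism, and an $S$-section of the morphism scheme over a pair $((A,e),(B,f))$ of these is precisely a morphism $h\colon A\to B$ in $\R(\f{C})$ with $h = f\circ h\circ e$. Hence $\R(\kar(\f{C}))$ is isomorphic to — in particular equivalent to — the Karoubi envelope of $\R(\f{C})$. As $X \mapsto X\times_S T$ preserves finite limits, it carries this presentation of $\kar(\f{C})$ to that of $\kar(\f{C}_T)$, so $\kar$ commutes with base extension. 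For $2$-functoriality, an additive functor $F$ over $S$ commutes with $\dom,\cod,\comp$, so its morphism-scheme map restricts to the $\mathrm{Idem}$ schemes and then to the $\kar$-morphism schemes, defining $\kar(F)$; a natural transformation $\eta$ over $S$ induces $\kar(\eta)$ with component $G(e)\circ(\eta\circ\pi) = (\eta\circ\pi)\circ F(e)$ at $(A,e)$, and compatibility with identities and the two compositions of $2$-cells is mechanical.

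For $\add$ I would set $\ob(\add(\f{C})) = \coprod_{n\geq 0}\ob^{\times_S n}$, the $n=0$ summand $S$ being the formal zero object, and over the component indexed by a pair $(n,m)$ take as morphism scheme the fibered product of $nm$ copies of $\mor$ over $\ob^{\times_S n}\times_S\ob^{\times_S m}$, the $(j,i)$-th copy attached by $\dom$ to the $i$-th coordinate of $\ob^{\times_S n}$ and by $\cod$ to the $j$-th coordinate of $\ob^{\times_S m}$, so that its $S$-sections are $m\times n$ matrices of morphisms (zero entries supplied by the group structure on $\mor$); addition is entrywise and composition is matrix multiplication, assembled from iterated $\comp$ and $\ladd$. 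The same arguments identify $\R(\add(\f{C}))$ with the additive envelope of $\R(\f{C})$, yield $2$-functoriality, and — since disjoint unions and fiber products of $S$-schemes commute with $-\times_S T$ — give commutation with base extension; but finite type already fails at the object scheme, which is an infinite disjoint union. Finally I would set $\ps := \kar\circ\add$; since the pseudo-abelian envelope is by definition the Karoubi envelope of the additive envelope, all the asserted properties of $\ps$ follow at once from those of $\kar$ and $\add$.

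The one step needing genuine attention is checking that the internally-defined object and morphism schemes carry the intended functor of points — that the equalizer, fiber-product, and disjoint-union recipes really do encode ``idempotent endomorphism,'' ``$m\times n$ matrix of morphisms,'' and ``morphism of Karoubi or additive data'' on $S$-sections and, crucially, after an arbitrary base change $T\to S$. Everything else (the internal category axioms, the $2$-cell bookkeeping, biadditivity of the inherited additions, matrix multiplication as a morphism of schemes) is routine.
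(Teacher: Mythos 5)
Your proposal is correct and follows essentially the same route as the paper: both build $\kar(\f{C})$ internally by carving the idempotent/object scheme and the morphism scheme out of fiber products of $\ob$ and $\mor$ via equalizers (your single condition $h = f\circ h\circ e$ is equivalent to the paper's $h = h\circ e = f\circ h$ by idempotency), build $\add$ from disjoint unions of $S$-powers of $\ob$ with matrix morphism schemes, set $\ps = \kar\circ\add$, and deduce compatibility with $\R$ and with base extension from the fact that these constructions are assembled from finite limits. You supply rather more detail than the paper's terse proof, but the underlying construction is the same.
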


As the proof will show, this is true for internal categories in a category with arbitrary fibered products.  Even so, we cannot find this result in a reference.

\begin{proof} All these categories are constructed to parametrize suitable diagrams. Given $\f{C} = (\ob, \mor, \comp, \id)$, we take for the object scheme $\ob_K$ of $\kar(\f{C})$ the intersection   of the equalizer of $\ob \times_S \mor$ under the three maps $\{p_1, \dom \circ p_2 , \cod \circ p_2 \}$ and the equalizer of the maps $\{\comp \circ \Delta_{\mor} \circ p_2, p_2\}$, where $p_1, p_2$ are projections on the first and second factor.  The morphism scheme is taken to be the intersection of the equalizers of 
\( \ob_K \times_S \ob_K \times_S \mor  \)
under $ \{  p_1 \circ q_1, \dom \circ q_3\}, \{  p_1 \circ q_2, \cod \circ q_3\}, \{\comp \circ (q_3, p_2 \circ q_1), \comp \circ (p_2 \circ q_2, q_3), q_3 \}$, where $q_1, q_2, q_3$ are the projections from \( \ob_K \times_S \ob_K \times_S \mor  \) onto the first, second, and third factors.
Then the first part of the result follows from the definitions, since base extension commutes with $S$-products.  The proof for $\add$ is similar, though because of the infinite coproduct involved over direct sums of larger sizes, $\add$ does not preserve the property of finite type.  Then $\ps = \kar \circ \add$.
\end{proof}

\subsection{Semisimplicity} We shall now demonstrate a method of proving generic semisimplicity of a family of categories.

\begin{theorem} \label{ssgeneric} Let $\f{C}$ be a finite type $Ab$-category over $S$ for  $S$ a  noetherian scheme.  Suppose $Z \subset S$ is a Zariski dense subset such that for $s \in Z$ we have $\ps( \mathcal{C}_{\b{s}})$ semisimple abelian with endomorphism rings either zero or infinite.  Then,  for $\xi$ in a dense open subset of $S$,   $\ps(\mathcal{C}_{\b{\xi}})$ is semisimple abelian.
\end{theorem}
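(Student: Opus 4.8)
The plan is to reduce the statement to a finite amount of constructibility data, all of which has been packaged by the earlier propositions of this section, together with a single input about semisimple categories that can be tested on a dense subset. First I would bound the combinatorial complexity. The hypothesis ``$\ps(\mathcal{C}_{\b{s}})$ semisimple abelian'' for $s \in Z$ should be upgraded, on a possibly smaller but still Zariski dense subset $Z' \subset Z$, to a uniform bound: there is a fixed integer $N$ so that every object of $\ps(\mathcal{C}_{\b s})$ is a direct sum of at most $N$ simple objects, and there are at most $N$ isomorphism classes of simple objects. This uses that $\f{C}$ is of finite type — the object and morphism schemes are noetherian and have finitely many irreducible components — so the ``number of simple summands'' and ``number of simple classes'' are fiberwise upper semicontinuous in a suitable sense, and hence bounded on a dense subset (here one may need to shrink $Z$ to $Z'$, but density is preserved because we only discard a set meeting each component in a proper closed subset, or argue by a finite stratification). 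Once $N$ is fixed, I can replace $\ps(\f{C})$ by a finite-type truncation built from the $\add$ and $\kar$ envelopes of Proposition~\ref{karoubiandps} applied to a bounded version: concretely, work with $\kar(\add^{\leq N}(\f{C}))$, which by that proposition is of finite type and commutes with base extension, and whose $\R_{\b s}$ recovers the relevant subcategory of $\ps(\mathcal{C}_{\b s})$.

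Next I would express ``semisimple abelian'' for a fiber in terms of constructible conditions on this finite-type truncation. Semisimplicity of a Karoubian Ab-category with finitely many simples and bounded length is equivalent to the conjunction of: (i) every object decomposes as a finite direct sum of objects with local (indeed, field, once we invoke the infinite endomorphism hypothesis below) endomorphism rings; (ii) $\hom$ between non-isomorphic indecomposables vanishes; (iii) the category is abelian, i.e. kernels and cokernels exist and mono+epi implies iso. Conditions (i) and (ii) are detected by the existence of enough idempotents and by vanishing of certain $\hom$-schemes, which are constructible in the base by Chevalley; condition (iii), existence of (co)kernels, is exactly the content of Corollary~\ref{limisgeneric} applied to the relevant finite shape categories $\mathcal{Z}$, and the mono-epi-iso condition combines Propositions~\ref{injisgeneric} with Lemma~\ref{bijisconst}. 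So there is a constructible set $W \subset S$ such that $s \in W$ iff $\ps(\mathcal{C}_{\b s})$ is semisimple abelian with the structure of $N$-bounded length. By hypothesis $Z' \subset W$, and $Z'$ is Zariski dense, so $W$ is a dense constructible subset of $S$; a dense constructible subset of a noetherian scheme contains a dense open subset $U$. Then for every $\xi \in U$, $\ps(\mathcal{C}_{\b\xi})$ is semisimple abelian, which is the claim.

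The role of the hypothesis that endomorphism rings are ``either zero or infinite'' is to rule out the pathology where, at a special point, an endomorphism ring degenerates to a nontrivial finite-dimensional local ring (e.g.\ $k[\epsilon]/\epsilon^2$) rather than a field: finiteness of a residue-type invariant is constructible, and an infinite division algebra over an algebraically closed field is forced to be the field itself, so the condition ``all nonzero endomorphism rings are fields'' becomes constructible and is implied on $Z'$; without this one could not separate ``semisimple'' from ``the radical is generically trivial but jumps.'' I expect the main obstacle to be the first step — establishing the uniform bound $N$ and the reduction to a genuinely finite-type subcategory whose $\R_{\b s}$ faithfully captures semisimplicity of the full $\ps(\mathcal{C}_{\b s})$. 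The subtlety is that $\ps(\f{C})$ itself is not of finite type, so one must argue that on a dense subset no ``new'' simple objects or longer decompositions appear beyond the truncation level, i.e.\ that the truncation is not just a subcategory but essentially all of it generically; this is where noetherianness of the object scheme and a stratification argument do the real work, and where one has to be careful that passing from $Z$ to $Z'$ does not destroy Zariski density.
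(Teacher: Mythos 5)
Your overall strategy — encode ``semisimple abelian'' as a constructible condition on the base, note that it holds on the dense set $Z$, and conclude it holds on a dense open set — is the same as the paper's. But you have correctly identified the crux (``establishing the uniform bound $N$'') and then not actually proved it: the assertion that the ``number of simple summands'' is ``fiberwise upper semicontinuous in a suitable sense'' is exactly the statement that needs an argument, and no stratification of the noetherian object scheme will give it to you by itself. The paper's mechanism is a dimension count on the \emph{morphism} scheme: since $\mor \to \ob \times_S \ob$ is of finite type, $M = \sup_x \dim(\mor_{\b{x}})$ is finite; if $Y = X_1 \oplus \dots \oplus X_m$ with each $X_i$ simple and nonzero in $\mathcal{C}_{\b{s}}$ for $s \in Z$, then $\mor_{\b{(Y,Y)}} \simeq \prod_{i,j} \mor_{\b{(X_i,X_j)}}$, and each diagonal factor has dimension $\geq 1$ because $\hom(X_i,X_i)$ is an \emph{infinite} set of closed points of a finite-type scheme over an algebraically closed field. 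Hence $m^2 \leq M$, giving the uniform bound $N = \lfloor\sqrt{M}\rfloor$ valid at \emph{every} $s \in Z$ simultaneously — no shrinking of $Z$ is needed, which also dissolves your worry about preserving density when passing to $Z'$.

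This also shows that you have misread the role of the hypothesis that endomorphism rings are ``zero or infinite.'' It is not there to rule out non-reduced endomorphism rings at special points (at points of $Z$ the category is already assumed semisimple, so endomorphism rings of simples are division rings there); it is there precisely to force $\dim \mor_{\b{(X_i,X_i)}} \geq 1$ in the count above — a finite division ring would contribute a zero-dimensional fiber and destroy the inequality $m^2 \leq M$. A secondary issue: you also demand ``at most $N$ isomorphism classes of simple objects,'' which is neither needed nor generally available (the families of interest have continuous families of simples); the paper only needs the bound on the length of each individual object, which is what the sets $Sim^n \subset \ob$ of Proposition~\ref{simplicityisgeneric} and Corollary~\ref{limisgeneric} encode. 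Once $\ob_{\b{\xi}} = (Sim^N)_{\b{\xi}}$ and kernels and cokernels exist for $\xi$ in a dense open set, the additive envelope is semisimple abelian and therefore already pseudo-abelian, completing the proof without your conditions (i)--(iii).
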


\begin{proof}
Without loss of generality, assume $\mathcal{C}_{\bar{s}}$ Karoubian for $s \in Z$.  Recall that $Sim \subset \ob$ is constructible, for $\ob$ the object scheme of $\f{C}$.    By Propositions~\ref{simplicityisgeneric} and \ref{limisgeneric}, there are constructible subsets $Sim^1, Sim^2, \dots \subset \ob$ with $(Sim^n)_{\b{s}}$ having closed points corresponding to objects in $\mathcal{C}_{\b{s}}$ which are the direct sum of at most $n$ simple objects, for each $s,n$.  

Let $M = \sup_{x \in \ob \times_S \ob} \dim(\mor_x) = \sup_{x \in \ob \times_S \ob} \dim(\mor_{\b{x}})$.  We show that for $s \in Z$, any object in $\mathcal{C}_{\b{s}}$ is a direct summand of at most $\sqrt{M}$ simple objects.  Indeed, if $X \in \mathcal{C}_{\b{s}}$ is nonzero and simple, then $\dim \mor_{\b{(X,X)}}   \geq 1$ since  $\hom(X,X)$ is infinite.   If $Y = X_1 \oplus \dots \oplus X_m$ for $X_i$ nonzero and simple in $\mathcal{C}_{\b{s}}$, then \(m^2 \leq \dim \mor_{\b{(Y,Y)}} \leq M,\) since  from projection, inclusion, and $\comp$, there is an isomorphism of schemes $\mor_{{\b{(Y,Y)}}} \simeq \prod_{i, j \ \ \overline{k(s)}} \mor_{\overline{ (X_i,X_j)}}$.  In particular, we see that $(\ob - Sim^N)_{\b{s}} = \emptyset$ if $N > \sqrt{M}$ and $s \in Z$.  Hence $\ob_{\b{\xi}} = (Sim^N)_{\b{\xi}}$ for $\xi \in U$ for $U \subset S$ open and dense.  By shrinking $U$ if necessary and using Proposition~\ref{limisgeneric}, we may assume $\mathcal{C}_{\b{\xi}}$ admits kernels and cokernels for $\xi \in U$.  The additive envelope of $\mathcal{C}_{\b{\xi}}$ for $\xi \in U$ is thus semisimple abelian and consequently is  equivalent to the pseudo-abelian envelope.
\end{proof}

\subsection{Deligne's categories as $\A_{\C}^1$-categories}

We now incorporate Deligne's construction into the above framework.
It follows that there is a category $\rrep'(S_T)$ over $\A^1$ with $\R(\rrep'(S_T)) \simeq \rep'(S_T)$; the object scheme is an infinite coproduct $\coprod_{j \in \mathbb{Z}_{\geq 0}} \A^1$ while $\mor = \coprod_{j,k} \A^1 \times \A^{\rec(j,k)}$, where $\rec(j,k)$ denotes the number of equivalence classes of recollements of a set with $j$ elements and one with $k$ elements.  

Let $\rrrep''(S_T) = \ps( \rrep'(S_T))$; this admits a bifunctor over $\A^1$ corresponding to the tensor product.  By base extension of $\rrrep''(S_T)$ via $\mathbb{Z}[T] \to R$, $T \to t$, we find: 

\begin{proposition} There is a  $\spec R$-category $\del^R_t$ with $\R(\del^R_t) \simeq  \rep(S_t,R)$.  
\end{proposition}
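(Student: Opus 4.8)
The plan is to unwind the definitions and check that the constructions line up functorially with base extension. First I would recall that we have already built $\rrep'(S_T)$, a category over $\A^1_{\ints[T]} = \spec\ints[T]$ (viewing $T$ as the coordinate), whose Yoneda realization $\R(\rrep'(S_T))$ recovers Deligne's $\ints[T]$-linear category $\rep'(S_T)$; its object scheme is $\coprod_{j\ge 0}\A^1$ and its morphism scheme is $\coprod_{j,k}\A^1\times\A^{\rec(j,k)}$, with composition given by the polynomials $P^E_{C,D}(T)$. The key point is that this is an Ab-category over $\spec\ints[T]$ in the sense of the earlier definition: the group-scheme structure on $\mor$ over $\ob\times\ob$ is the obvious additive structure on the affine spaces $\A^{\rec(j,k)}$, and addition commutes with composition because the $P^E_{C,D}$ enter linearly.

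Next I would apply Proposition~\ref{karoubiandps}: the 2-functor $\ps = \kar\circ\add$ produces an Ab-category $\rrrep''(S_T) := \ps(\rrep'(S_T))$ over $\spec\ints[T]$, and since $\ps$ commutes with base extension, for any ring map $\ints[T]\to R$ sending $T\mapsto t$ we get $\rrrep''(S_t)^R := B^{\spec R}_{\spec\ints[T]}(\rrrep''(S_T))$, an Ab-category over $\spec R$. I would then invoke the compatibility half of Proposition~\ref{karoubiandps}, namely $\R(\ps(\f C))\simeq \ps(\R(\f C))$, together with the fact (to be checked) that $\R$ commutes with the base extension $B$ — that is, $\R_{\spec R}(B^{\spec R}_{\spec\ints[T]}(\f C))$ is equivalent to the base change along $R$ of $\R_{\spec\ints[T]}(\f C)$ in the sense of extending scalars in the Ab-category. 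Granting these, $\R(\rrrep''(S_t)^R)$ is equivalent to $\ps(\rep'(S_t,R))$, which is exactly $\rep(S_t,R)$ by the definition recalled in the excerpt (pseudo-abelian envelope of the specialization $\rep'(S_t,R)$). Setting $\del^R_t := \rrrep''(S_t)^R$ finishes the statement.

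The main obstacle I expect is the bookkeeping in the claim that $\R$ commutes with base extension along $\ints[T]\to R$ and that scalar extension of the Yoneda realization agrees with realization of the base-extended internal category. Over a general base $S$ this is not quite formal because $\R$ takes $S$-sections $S\to\ob$ as objects and $\R_T$ takes $T$-sections; one must verify that pulling back sections along $T\to S$ induces the expected (fully faithful, essentially surjective after idempotent/additive completion) comparison. In the case at hand it is tractable because $\ob$ and $\mor$ are disjoint unions of affine spaces, so sections are just tuples of ring elements and the comparison is concrete; the subtlety is only that $\R$ of an internal category can see fewer objects than a naive "$R$-points" construction, which is precisely why one passes through $\ps$. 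A secondary point is checking that the tensor bifunctor on $\rrep'(S_T)$ — coming from $[U]\otimes[V]=\bigoplus_{C\in\rec(U,V)}[C]$ — is a morphism of $\spec\ints[T]$-categories (i.e. given by $\spec\ints[T]$-morphisms of object and morphism schemes), hence descends to $\rrrep''(S_T)$ and base-extends; this is routine from the explicit formulas but should be stated.
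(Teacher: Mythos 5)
Your proposal matches the paper's argument: the paper likewise constructs $\rrep'(S_T)$ over $\spec\mathbb{Z}[T]$ with the object scheme $\coprod_j\A^1$ and morphism scheme $\coprod_{j,k}\A^1\times\A^{\rec(j,k)}$, sets $\rrrep''(S_T)=\ps(\rrep'(S_T))$ via Proposition~\ref{karoubiandps}, and obtains $\del^R_t$ by base extension along $\mathbb{Z}[T]\to R$, $T\mapsto t$. The compatibility checks you flag (that $\R$ of the base-extended category recovers $\rep'(S_t,R)$, which is concrete here since sections of disjoint unions of affine spaces are just tuples of ring elements, and that the tensor bifunctor descends) are exactly the points the paper treats as immediate, so this is the same proof.
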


\subsubsection{Tensor structure}
It is clear that the tensor structure on $\del^R_t$ is a bifunctor $T$ over $\spec R$.  Moreover, if $D$ is the ``diagonal functor''  $\del^R_t \to \del^R_t \times_R \del^R_t$, then there are natural transformations over $\spec R$,
\( \id \to B(D), \ B(D) \to \id \)
from the self-duality in $\rep(S_t,R)$, which satisfy the appropriate axioms for evaluations and coevaluations.

\subsubsection{Filtration}
We let $\del = \del^{\C[T]}_T$; this is a category over $\A_{\C}^1$.
For $t \in \A_{\C}^1$, let $\del_t$ be the fiber of $\del$ at $t$.  There is a similar situation for the $\del^{(N)}$, which correspond to the categories $\rep(S_t)^{(N)}$ defined in 4.1 of \cite{De}; these are pseudo-abelian envelopes of $\A_{\C}^1$-categories of finite type.  Finally, we let $\del^{(M,N)}$ correspond to the category $\rep(S_t)^{(M,N)}$, which we define as the Karoubi envelope of the category obtained by formal direct sums with at most $M$ elements of objects in $\rep'(S_t)^{(N)}$; then $\del^{(M,N)}$ is an approximation to $\del^{(N)}$ and, most importantly, is of finite type. 

\subsubsection{Other Deligne categories}
There is a similar situation for the interpolations of $\repord({GL}_n), \repord({O}_n),$ and  $\repord(Sp_{2n})$, in view of their definitions in \cite{De2, De}.  We omit the details.  
We thus obtain a new proof of the following result.

\begin{theorem}[Deligne \cite{De}, contained in Theorem 2.18] Let $F$ be an algebraically closed field of characteristic zero.  Then $\rep(S_t, F)^{(N)}$ is semisimple abelian for all but finitely many $t \in F$, which are algebraic over $\mathbb{Q}$.  $\rep(S_t, F)$ is semisimple abelian for transcendental $t$.  Similarly for the other Deligne categories associated to the general linear, orthogonal, and symplectic groups.
\end{theorem}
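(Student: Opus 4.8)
The plan is to deduce this statement from Theorem~\ref{ssgeneric} applied to the $\A^1_{\C}$-categories $\del^{(M,N)}$ of finite type constructed above. First I would fix $N$ and recall that $\del^{(N)} = \ps(\del^{(M,N)})$ for $M$ large enough (relative to $N$), so that for each $t$ the category $\rep(S_t,F)^{(N)}$ is the pseudo-abelian envelope of $\R((\del^{(M,N)})_{\b t})$; thus it suffices to verify the hypotheses of Theorem~\ref{ssgeneric} with $S = \A^1_{\Q}$ (or $\A^1$ over the prime field, base-changed to $F$), $\f C = \del^{(M,N)}$, and $Z$ the set of integers $t \in \mathbb{Z}_{\geq 0}$ with $t > 2N$. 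This set $Z$ is Zariski dense in $\A^1$. By part (2) of Deligne's Theorem quoted above, for such $t$ the category $\rep(S_t,F)^{(N)}$ is a full subcategory of $\repord(S_t,F)$, hence semisimple abelian; and the endomorphism ring of any nonzero object is a nonzero finite-dimensional $F$-algebra, hence infinite since $F$ has characteristic zero. So the hypotheses of Theorem~\ref{ssgeneric} are met, and it yields a dense open $U_N \subset \A^1_F$ on which $\rep(S_\xi,F)^{(N)}$ is semisimple abelian.

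Next I would pass from "all but a dense open set" to "all but finitely many points, which are algebraic over $\Q$." The complement $\A^1_F \setminus U_N$ is a proper closed subset of the affine line, hence finite. To see these points are algebraic over $\Q$: the category $\del^{(M,N)}$ is defined over $\Q$ (indeed over $\mathbb{Z}[T]$, since the structure constants $P^E_{C,D}(T)$ lie in $\mathbb{Z}[T]$), so all the constructible sets $Sim, Sim^n, L$, etc.\ produced by Chevalley's theorem in Propositions~\ref{simplicityisgeneric} and \ref{limisgeneric} are defined over $\Q$; hence the open set $U_N$ descends to a dense open $U_N^{\Q} \subset \A^1_{\Q}$ with $U_N = U_N^{\Q} \times_{\Q} F$, and its complement consists of finitely many closed points of $\A^1_{\Q}$, i.e.\ finitely many numbers algebraic over $\Q$ (together, a priori, with the generic point, which contributes nothing). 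The transcendental case follows immediately: a transcendental $t \in F$ corresponds to the generic point of $\A^1_{\Q}$ after base change, so it lies in $U_N$ for every $N$; since $\rep(S_t,F) = \bigcup_N \rep(S_t,F)^{(N)}$ and a filtered union of semisimple abelian categories (with compatible kernels/cokernels, which one checks persists under the inclusions $\rep(S_t)^{(N)} \hookrightarrow \rep(S_t)^{(N+1)}$) is semisimple abelian, we conclude $\rep(S_t,F)$ is semisimple abelian.

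Finally, for the general linear, orthogonal, and symplectic Deligne categories, the argument is formally identical once one records — as indicated in the excerpt — that each admits a description as the pseudo-abelian envelope of an $\A^1$-category of finite type defined over $\Q$, with structure constants polynomial in the rank parameter, and that at large integer rank the truncated category embeds as a full subcategory of the honest representation category of the corresponding classical group (this is the content of the relevant parts of \cite{De2, De}). One then applies Theorem~\ref{ssgeneric} verbatim with $Z$ the appropriate set of large integers.

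I expect the main obstacle to be not any single deep step but the bookkeeping at the boundary between the finite-type approximations $\del^{(M,N)}$ and the categories $\del^{(N)}, \del$ of actual interest: one must be careful that semisimplicity of $\ps(\R((\del^{(M,N)})_{\b\xi}))$ for all sufficiently large $M$ really does give semisimplicity of $\rep(S_\xi,F)^{(N)}$ (i.e.\ that no new non-semisimplicity is introduced in the colimit over $M$, which holds because a simple object in $\rep(S_\xi)^{(N)}$, being a summand of some $[U]^{\oplus r}$ with $|U| \le N$, already appears in $\del^{(M,N)}$ for $M$ depending only on $N$), and likewise that the union over $N$ behaves well. Verifying that the relevant constructible sets and open loci are genuinely defined over $\Q$ — so that the exceptional values are algebraic — is routine but must be stated carefully, since it is exactly what upgrades "dense open" to "cofinite with algebraic complement."
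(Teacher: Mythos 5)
Your proposal is correct and follows essentially the same route as the paper: apply Theorem~\ref{ssgeneric} to a finite-type truncation of $\del$ (the paper uses $(\del_T^{F[T]})^{(1,N)}$, which suffices since the theorem passes to the pseudo-abelian envelope anyway), with $Z$ the large integers where Maschke's theorem gives semisimplicity, then obtain algebraicity of the exceptional set from definability over $\mathbb{Q}$ and the transcendental case from the filtration $\rep(S_t,F)=\bigcup_N\rep(S_t,F)^{(N)}$ by full subcategories. Your write-up is a faithful, more detailed expansion of the paper's argument.
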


\begin{proof} The statement about $\rep(S_t, F)^{(N)}$ follows from Theorem~\ref{ssgeneric} applied to $(\del_T^{F[T]})^{(1,N)}$ and Maschke's theorem. 
The statement about algebraicity follows because all the schemes in question are actually defined over $\mathbb{Q}$.
This implies the rest of the corollary for $\rep(S_t,F)$ since $\rep(S_t, F)^{(N)} \subset \rep(S_t,F)$ is a full subcategory and $\rep(S_t,F) = \bigcup_N \rep(S_t,F)^{(N)}$.   The other cases are handled similarly. \end{proof}

Unfortunately, this method does not address the rationality questions that arise when $F$ is not algebraically closed.  For a different method that yields the generic result using the specific properties of the partition algebra, see \cite{ComesOstrik}.  For results on the generic semisimplicity of \emph{tensor} categories obtained using the calculus of relations on regular categories, cf. \cite{Kn}; this method uses the specific properties of degree functions.

\subsubsection{Specialization functors}

By base extension, there is a \emph{specialization functor}
\[ \sp_t\colon \rep(S_T, \C[T])^{(M,N)} \simeq \R_{\A_{\C}^1}(\del^{(M,N)}) \to \rep(S_t)^{(M,N)} \] for $t \in \C.$  In simple terms, this corresponds to substituting a value in for a polynomial.  When $n \in \mathbb{Z}_{\geq 0}$, we may also define $\ssp \colon \rep(S_T, \C[T]) \to \repord(S_n)$.

For a given object or morphism $a$ in $ \rep(S_T, \C(T))$, we can define almost all specializations $\sp_t a$.  Indeed, if $Y$ is either the object or morphism scheme, then the map $\spec \C(T) \to Y$ factors into a map of the form
\(
\spec \C(T) \to \spec \C[T]_{P(T)} \to  Y,\)
where $P(T)$ is a suitable polynomial.

\subsubsection{Properties of specialization}  For almost all $t$, specialization sends the simple objects $X_\pi$ in Deligne's category $\rep(S_T, \C(T))^{(N)}$ indexed by the partition $\pi \colon l_2 + l_3 + \dots + l_k = l$ to those of $\rep(S_t)^{(N)}$ similarly indexed, by the proofs in \cite{De}, \S 5.   Deligne constructs the relevant objects by using certain ``Young projectors'' on objects $[U]$.

Finally, for  $n > 2N+1$, the simple object $X_\pi$ of $\rep(S_n)^{(N)}$, which is a full subcategory of $\repord(S_n)$, corresponds to the classical Specht module 
associated to the partition $n=n - l + l_2 + \dots + l_k$, in view of 6.4 in \cite{De}.

\section{The dAHA categories}

We shall now apply the methods of the previous section to the Etingof categories associated to the dAHA.


\subsection{$\rep(\h_t)$ in the present framework} 

It is not immediate that the Etingof categories form an instance of ``categories depending on a parameter.'' This follows from the next result, essentially due to Comes and Ostrik.

\begin{proposition}[Comes and Ostrik \cite{ComesOstrik}] \label{interpolateJM} On the $\A_{\C}^1$-category $\del$, there exists an endomorphism $\Omega$
  of the identity functor $\id_{\del}$ interpolating the action of the Jucys-Murphy
  element in the integer case.
\end{proposition}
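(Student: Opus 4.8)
The plan is to build $\Omega$ by hand on the category $\rrep'(S_T)$ over $\A^1_{\C}$ --- equivalently, on $\R(\rrep'(S_T)) = \rep'(S_T)$ --- and then to push it through the pseudo-abelian envelope, using that $\del$ is (up to base change) $\ps(\rrep'(S_T))$. A natural endomorphism of the identity functor extends uniquely along the Karoubi envelope (conjugate by the defining idempotent, $\Omega_{(A,e)} = e\,\Omega_A\,e$) and along the additive envelope (act block-diagonally), and these extensions commute with base extension; so it is enough to produce, for every finite set $U$, a morphism $\Omega_{[U]}\colon [U] \to [U]$ in $\rep'(S_T)$ --- that is, a $\C[T]$-linear combination of the recollements in $\rec(U,U)$ --- in such a way that the naturality square against each recollement morphism commutes and that specialization at large integers recovers the Jucys--Murphy action on $\injr(U,I)$.

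To write down $\Omega_{[U]}$ I would reverse-engineer the integer case. For $n \in \mathbb{Z}_{\geq 0}$ and an injection $f\colon U \to I$, split $\Omega = \sum_{i<j}(i\,j)$ according to how $\{i,j\}$ meets the image $f(U)$: the $\binom{n-\abs{U}}{2}$ pairs disjoint from $f(U)$ fix $e_f$; a pair meeting $f(U)$ in one point moves $f$ by resetting a single value to a point outside $f(U)$; and a pair contained in $f(U)$ acts through a transposition of $U$. This yields, on $\injr(U,I)$,
\[
\Omega \cdot e_f \;=\; \binom{n-\abs{U}}{2}\, e_f \;+\; \sum_{u \in U} \sum_{j \notin f(U)} e_{f[u \mapsto j]} \;+\; \sum_{\{u,v\} \subset U} e_{f \circ (u\,v)} ,
\]
where $f[u\mapsto j]$ reassigns the value of $f$ at $u$ to be $j$ and $(u\,v)$ is the transposition of $u,v$ on $U$. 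The point is that the last two terms are given by recollements that do not involve $n$: for fixed $u$ the middle sum is the recollement morphism $(C_u)$ with $C_u = U \sqcup \{\ast\}$, first leg the inclusion $U \hookrightarrow C_u$ and second leg sending $U \setminus u$ identically with $u \mapsto \ast$; the last sum is $\sum_{\{u,v\}} (C_{u,v})$ with $C_{u,v} = U$ and legs $\id_U$ and $(u\,v)$. I am thus led to \emph{define}
\[
\Omega_{[U]} \;=\; \binom{T-\abs{U}}{2}\, \id_{[U]} \;+\; \sum_{u \in U} (C_u) \;+\; \sum_{\{u,v\} \subset U} (C_{u,v}) \ \in\ \hom_{\rep'(S_T)}([U],[U]) .
\]

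The remaining work --- and the only step with real content --- is naturality. For $D \in \rec(U,V)$, both $\Omega_{[V]} \circ (D)$ and $(D) \circ \Omega_{[U]}$ are $\C[T]$-linear combinations of recollements in $\rec(U,V)$, so they are equal once they agree after $T \mapsto n$ for infinitely many integers $n$. For $n > \abs{U} + \abs{V}$ the functor $[W] \mapsto \injr(W,I)$ is fully faithful on the objects in play (\cite{De}, Prop.\ 2.9), and $\Omega$, being central in $\C[S_n]$, is a natural endomorphism of $\id_{\repord(S_n)}$; hence the two specializations coincide, and the polynomial identity follows from the density of $\mathbb{Z}_{\geq 0}$ in $\A^1_{\C}$. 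The same comparison shows $\sp_n \Omega$ (and $\ssp$) reproduces the Jucys--Murphy action for integer $n$. The expected obstacle is not any single hard estimate but the bookkeeping of the three-term split above, together with the observation that its ``mixed'' and ``$S_U$'' parts are honest $n$-independent recollement morphisms; once that is in place, the rest is the density principle plus formal manipulation of envelopes.
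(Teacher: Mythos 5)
The paper gives no proof of this proposition --- it is quoted from Comes--Ostrik --- so there is nothing internal to compare against; your construction is correct and is essentially the argument one finds in the cited source. The three-way split of $\sum_{i<j}(i\,j)$ according to $\abs{\{i,j\}\cap f(U)\}}$ is right, the identification of the two non-scalar terms with the recollements $(C_u)$ and $(C_{u,v})$ checks out against the description of $(C)$ as $\sum e_f\otimes e_g$ over pairs inducing $C$, and naturality plus the extension through $\ps$ follow by the density-at-integers principle used throughout the paper.
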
  

It is thus similarly possible to define specialization functors in this setting.

Consequently, there is an $\A_{\C}^1$-category $\delh$ with 
\( \R( \delh_t) \simeq \rep(\h_t)\).  There is a similar situation for $\delh^{(N)}$ and $\delh^{(M,N)}$, which correspond to the categories where the underlying object in $\rep(S_t)$ belongs to $\rep(S_t)^{(N)}$ (resp. $\rep(S_t)^{(M,N)}$).

\subsection{The classical dAHA representation theory}
We state the classification of simple
 $\h_n$-modules.  Given $a \in \C$, there exists an \emph{evaluation homomorphism} $\h_n \to \C[S_n]$
(\cite{AS}, Lemma 1.5.3)  that is the identity on $\C[S_n]$ and sends $x_i \to a+  \sum_{j<i}
(i \ j)$. Alternatively, the homomorphism sends $y_i \to a + \frac{1}{2}
\sum_{j \neq i} (i \ j)$.  There is a
corresponding evaluation functor $\ev_a \colon \repord S_n \to \repord \h_n$. 
Next, there is a homomorphism 
\(\h_m \otimes
  \h_n \to \h_{m+n} \) inducing the canonical maps $\C[S_m] \otimes \C[S_n] \to \C[S_{m+n}]$ and $\C[x_1, \dots, x_m] \otimes \C[x_{m+1}, \dots, x_{m+n}] \to \C[x_1, \dots, x_{m+n}]$.  
They lead to induction functors $\repord \h_m \times \repord \h_n
\to \repord \h_{m+n}$ sending $A,B$ to $\h_{m+n} \otimes_{\h_m \otimes
  \h_n} ( A \otimes B )$, which can be iterated.   Cf. \cite{Kl}.

\subsubsection{Standard modules}

Fix an unordered partition $\pi\colon
n= l_1 + \dots + l_k $.   Let $\t_k = \C^k$, spanned by a basis
$\epsilon_1^\vee, \dots, \epsilon_k^\vee$; this corresponds to the abelian Lie subalgebra of $\gl_k$ consisting of diagonal matrices.
Let $\t_k^*$ be its dual, and $\epsilon_1, \dots, \epsilon_k \in \t_k^*$ be the
dual basis to $\epsilon_1^\vee, \dots, \epsilon_k^\vee$.  Set $\rho =
\frac{1}{2} \sum_{i<j} (\epsilon_i - \epsilon_j) .$ Let
\[ D_k = \{ \lambda \in \t_k^*\colon (\lambda + \rho)(\epsilon_i^\vee -
\epsilon_j^\vee) \notin \mathbb{Z} \cap (-\infty,0), \ 1 \leq  i < j \leq
k\} .\]

Fix $\mu = \sum_i \mu_i \epsilon_i \in \t_k^*$.  Define \[M^n(\mu)_\pi = \ind_{{l_1} ,   \dots , { l_k}}^{n} ( \ev_{\mu_1 +\rho_1} \unital, \dots,  \ev_{\mu_k + \rho_k}
\unital) \in \repord(\h_n).\]
If $\mu + l_1 \epsilon_1 + l_2 \epsilon_2 + \dots l_k \epsilon_k \in
D_k$, then $M^n(\mu)_\pi$ is called a \emph{standard module}.

\begin{theorem}[Zelevinsky \cite{Ze}]  \label{zv}
\begin{enumerate}
\item Each standard module $M^n(\mu)_\pi$ has  a \\ unique simple quotient
$L^n(\mu)_\pi$.   Every simple representation of $\h_n$ is isomorphic to some $L^n(\mu)_{\pi}$.
\item $L^n(\mu)_\pi$ contains as a $S_n$-submodule the irreducible
$S_n$-representation corresponding to the partition $\pi$. 
\end{enumerate}
\end{theorem}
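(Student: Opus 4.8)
The plan is to follow the Bernstein--Zelevinsky strategy, treating the commuting generators $x_1,\dots,x_n$ (equivalently the $y_i$) as a cuspidal support. First I would decompose an arbitrary finite-dimensional $\h_n$-module $V$ into generalized eigenspaces $V=\bigoplus_\gamma V_\gamma$ for $\C[x_1,\dots,x_n]$, indexed by $\gamma\in\C^n$. The relation $s_ix_i-x_{i+1}s_i=1$ shows that $s_i$ carries $V_\gamma$ into $V_\gamma\oplus V_{s_i\gamma}$, so the support of $V$ is a union of $S_n$-orbits, and a single orbit --- the \emph{central character} --- when $V$ is simple. Regarding a representative as a multiset in $\C$ and breaking its members into maximal strings $\{a,a+1,\dots,a+l-1\}$ within each coset of $\mathbb{Z}$ produces, up to reordering, exactly the data of a composition $\pi\colon n=l_1+\dots+l_k$ together with starting points, which one normalizes into the form $\mu+l_1\epsilon_1+\dots+l_k\epsilon_k\in D_k$. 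The hypothesis $D_k$ is there precisely to make this segment decomposition unambiguous: it forbids two segments from being ``linked'' in a way that would force them to merge.

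Next I would realize the standard modules concretely. Since $\ev_a\unital$ is the one-dimensional $\h_l$-module of weight $(a,a+1,\dots,a+l-1)$, the module $M^n(\mu)_\pi$ is induced from such ``segment modules.'' Using the PBW decomposition of $\h_n$ as a free module over $\h_{l_1}\otimes\dots\otimes\h_{l_k}$, I would analyze the weights of $M^n(\mu)_\pi$ and single out the weight $\gamma_0$ read off from the inducing data (placed in standard position by $D_k$): its generalized weight space is one-dimensional, and $\gamma_0$ is extremal for the natural partial order on weights coming from the $s_i$-action. The cyclic generator $1\otimes(v_1\otimes\dots\otimes v_k)$ spans this line and lies outside every proper submodule --- each such submodule being a sum of generalized weight spaces, it must omit $\gamma_0$ --- so the sum of all proper submodules is still proper, giving a unique maximal submodule and hence a unique simple quotient $L^n(\mu)_\pi$. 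For exhaustion, given a simple $L$ I would take a weight vector of extremal weight, read off its segment data $(\pi,\mu)$ with $\mu+\sum l_i\epsilon_i\in D_k$, and use adjunction between induction and restriction for the parabolic $\h_{l_1}\otimes\dots\otimes\h_{l_k}\subset\h_n$ (the extremal vector spans a one-dimensional submodule of $\res L$ over this subalgebra) to produce a nonzero map $M^n(\mu)_\pi\to L$, whence $L\cong L^n(\mu)_\pi$; distinctness of the parameters is then visible from the central character together with the segment decomposition.

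For part (2), I would use that as a $\C[S_n]$-module $M^n(\mu)_\pi$ restricts to $\ind_{S_{l_1}\times\dots\times S_{l_k}}^{S_n}(\mathrm{triv})=M^\pi$, the Young permutation module, which in characteristic zero equals $\bigoplus_{\lambda\trianglerighteq\pi}K_{\lambda\pi}S^\lambda$ with $K_{\pi\pi}=1$. Since $\res L^n(\mu)_\pi$ is a direct summand of $M^\pi$, the Specht module $S^\pi$ occurs in it with multiplicity $0$ or $1$, and the task is to exclude $0$: I would show that the polytabloid generating the unique copy of $S^\pi$ inside $M^\pi$ has nonzero image in $L^n(\mu)_\pi$, for instance by exhibiting it as an $\h_n$-translate of the cyclic generator $1\otimes(v_1\otimes\dots\otimes v_k)$, or by identifying $L^n(\mu)_\pi$ with the image of the standard intertwiner from $M^n(\mu)_\pi$ into the costandard module and reading off its $S_n$-socle.

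I expect the main obstacle to be the Mackey-type analysis underlying the unique-quotient and exhaustion claims: one must compute $\hom_{\h_n}$ between induced standard modules via a double-coset decomposition and check that the hypothesis $\mu+\sum l_i\epsilon_i\in D_k$ forces every relevant intertwiner to behave as in the unlinked case, so that no standard module acquires an extra simple quotient or loses its expected one. A secondary --- more elementary but still delicate --- difficulty is tracking the single copy of $S^\pi$ through the passage to the quotient.
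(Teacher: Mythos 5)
First, a point of reference: the paper does not prove this theorem. It is imported from the literature --- Zelevinsky's classification, in the dAHA formulation of Suzuki --- and the text explicitly defers to \cite{Ze} and to Theorems 2.3.1 and 2.3.3 of \cite{Su}. So your proposal is not being measured against an argument in the paper; it is an attempt to reconstruct a substantial external theorem, and the question is whether it would close. Your overall strategy (generalized weight spaces, segments, cyclicity, Young's rule for part (2)) is the right Bernstein--Zelevinsky skeleton.

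It does not close as written, because of one concretely false step and two essential gaps that you flag but do not fill. The false step is the claim that the generalized weight space of the extremal weight $\gamma_0$ is one-dimensional. Take $n=2$, $\pi\colon 2=1+1$, $\mu_1=\mu_2$, so both segments coincide; then $(\lambda+\rho)(\epsilon_1^\vee-\epsilon_2^\vee)=1\notin\mathbb{Z}\cap(-\infty,0)$, so this is a standard module, yet it is two-dimensional with $x_1$ acting by a single Jordan block: the generalized weight space of $\gamma_0$ is everything. More generally, whenever the weight multiset has repetitions across segments --- in particular for linked segments such as $\{0,1\}$ and $\{1,2\}$, which is exactly the case where uniqueness of the quotient is at stake --- cosets $w$ outside the Young subgroup with $w\gamma_0=\gamma_0$ inflate this weight space. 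Relatedly, a submodule is the direct sum of its \emph{intersections} with the generalized weight spaces, not a union of whole weight spaces, so ``it must omit $\gamma_0$'' needs to be derived from cyclicity of the generator, and this derivation is precisely what fails once the weight space is not a line. The remaining gaps are the intertwiner/Mackey analysis you defer (needed both for the unique quotient in the linked case and for exhaustion, where you must also show the extremal weight vector transforms by the correct character of the \emph{full} parabolic $\h_{l_1}\otimes\cdots\otimes\h_{l_k}$, group-algebra part included, before Frobenius reciprocity applies), and, in part (2), the nonvanishing of the unique copy of the Specht module $S^\pi$ in the quotient. Your reduction of (2) to Young's rule plus this nonvanishing is correct, but the nonvanishing is the entire content of the cited Theorem 2.3.3 and is not supplied by either of the routes you gesture at.
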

Zelevinsky proved this in \cite{Ze} in the language of $p$-adic groups.  The statement above is from Theorems 2.3.1 and 2.3.3 of \cite{Su}.
We will  interpolate the standard modules to complex rank.

\subsection{Interpolation of $\ev_a$} The evaluation homomorphisms themselves do not make sense in complex rank, but the next result shows that the associated functors do.

\begin{proposition} \label{evinter}  There is a functor over $\A_{\C}^1$, $\ev\colon B_{\A_{\C}^1}^{\A_{\C}^2}( \del) \to \delh$ inducing by fibers for $a \in \C$, functors $\ev_a\colon \del \to \delh$ that interpolate the   evaluation functors defined above.
\end{proposition}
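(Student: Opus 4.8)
The plan is to recognize the evaluation homomorphism $\h_n \to \C[S_n]$, in its $y_i \to a + \tfrac12\sum_{j\neq i}(i\ j)$ form, as being already expressed in a ``category-friendly'' way, and then to transport that expression into Deligne's setting using Proposition~\ref{interpolateJM}. Concretely, the data of an $\h_n$-module on $M$ amounts, by the proposition of Etingof quoted above, to an $S_n$-morphism $y\colon \f{h}\otimes M \to M$ satisfying \eqref{eq: dAHA relation}. For the image of $\repord(S_n)$ under $\ev_a$, the module structure on $M$ is the one where $y_1$ acts by $a + \tfrac12\sum_{j\neq 1}(1\ j)$; in the category $\rep(S_n)$ language this $y$-map should be built from the scalar $a\cdot\mathrm{ev}_{\f{h}}$ together with a piece coming from the Jucys--Murphy element $\Omega$ acting on $\f{h}\otimes M$, since the sum $\sum_{j\neq 1}(1\ j)$ is exactly the ``interaction of the first strand with the rest'' measured by $\Omega_{\f{h}\otimes M} - \Omega_M$ (suitably contracted against $B_{\f{h}}$). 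So first I would write down, purely in terms of the symmetric monoidal structure on $\rep(S_t)$, the evaluation map $B_{\f{h}}$, the interpolated Jucys--Murphy endomorphism $\Omega$ of $\id_\del$ from Proposition~\ref{interpolateJM}, and the parameter $a$, a canonical morphism $y_a\colon \f{h}\otimes M \to M$ natural in $M\in\del$, and check that at $t=n\in\mathbb{Z}_{\geq 0}$ it recovers the $\h_n$-action coming from $\ev_a$.

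Second, I would verify that $y_a$ satisfies the dAHA relation \eqref{eq: dAHA relation} as an identity of morphisms in $\del$ over $\A^2_\C$ (the base $\A^2$ recording the pair $(t,a)$, with the first $\A^1$ being Deligne's parameter and the second the evaluation parameter $a$). Both sides of \eqref{eq: dAHA relation} are then morphisms in the $\A^2_\C$-category $B^{\A^2_\C}_{\A^1_\C}(\del)$, natural in $M$; that they agree can be checked after applying $\R$ and it suffices, by the density of $\mathbb{Z}_{\geq 0}$ in $\A^1$ and the fact that morphism spaces are given by polynomial formulas in $t$ (and are finite-dimensional over the function field), to check the identity for $t = n$ a large positive integer. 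But for $t=n$ the identity is precisely the statement that $\ev_a(M)$ is an honest $\h_n$-module, which is classical (it is the content of \cite{AS}, Lemma 1.5.3, reread through Etingof's proposition). Hence the relation holds identically. This gives, for each $M$, an object $\ev_a(M)\in\delh$, functorially, and since a morphism in $\del$ automatically commutes with $B_{\f{h}}$, $\Omega$, and scalars, it commutes with $y_a$ and so descends to a morphism in $\delh$; thus $\ev_a$ is a functor $\del\to\delh$, and letting $a$ vary assembles these into a single functor $\ev$ over $\A^1_\C$ from $B^{\A^2_\C}_{\A^1_\C}(\del)$ to $\delh$ (the construction being manifestly compatible with base change in $a$, hence internal).

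Third, I would confirm that at $a$ fixed and $t=n$ the functor $\ev_a\colon \del\to\delh$ so constructed agrees, under $\R$ and the specialization/negligible-quotient identifications of Deligne's Theorem, with the classical $\ev_a\colon\repord(S_n)\to\repord(\h_n)$: this is immediate once step one is carried out, since the underlying $S_n$-module is unchanged and the $y$-map was arranged to reproduce $a+\tfrac12\sum_{j\neq 1}(1\ j)$ on the nose.

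The main obstacle I expect is step one: pinning down the exact coefficients in the formula for $y_a$ in terms of $B_{\f{h}}$ and $\Omega$, i.e. correctly interpolating the finite sum $\sum_{j\neq i}(i\ j)$ by a morphism built from the Jucys--Murphy endomorphism. The combinatorics here — relating $\Omega$ on $\f{h}\otimes M$ to $\Omega$ on $M$ and an ``extra strand'' term — is the same bookkeeping that appears in \cite{ComesOstrik, Et, Etdraft}, and getting the normalization (the factor $\tfrac12$, the role of $\rho$) exactly right so that \eqref{eq: dAHA relation} holds on the nose for integer $t$ is the only genuinely delicate point; everything downstream is a density argument of the type already used in the proof of Theorem~\ref{ssgeneric} and the preceding propositions.
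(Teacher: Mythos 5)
Your proposal is correct in outline, but it reaches the interpolated $y$-map by a genuinely different route than the paper. The paper's proof is a direct coefficient computation: it writes the element $e \in [\one]\otimes[U]\otimes[U]$ encoding the classical evaluation action, checks case by case (whether $f=g$, whether $i\in\im(f)$, whether $f,g$ differ by a transposition through $i$) that the coefficient of each basis vector $y_i\otimes f\otimes g$ depends only on the recollement $D\in\rec(\one,U,U)$ it induces and is a polynomial $Q^D(n,a)$, and then \emph{defines} the interpolated morphism as $\sum_D Q^D(T,a)(D)$; the dAHA relation \eqref{eq: dAHA relation} and functoriality are then both verified by specializing $T\to n$ for large $n$. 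You instead assemble $y_a$ as a universal composite of already-interpolated structural data — the scalar $a$, the evaluation $B_{\f{h}}$, the comultiplication-type map into $\f{h}\otimes\f{h}$, and the Comes--Ostrik endomorphism $\Omega$ of $\id_{\del}$ — which buys you two things the paper has to check by hand: naturality of $y_a$ in $M$ is formal (each constituent is natural), and polynomiality in $(t,a)$, hence internality over $\A_{\C}^2$, is automatic since $a$ enters only as a scalar. The cost is that the delicate computation does not disappear but moves: you must verify that your composite of $\Omega^{2,3}$, $B_{\f{h}}$, and the doubling map, with the correct correction by $\id\otimes\Omega_M$ and normalization $-\tfrac12$, reproduces $a+\tfrac12\sum_{j\neq i}(i\,j)$ on the nose at integer rank, which is bookkeeping of essentially the same weight as the paper's case analysis. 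Your density argument for \eqref{eq: dAHA relation} is the same as the paper's and is sound, given that $\C[T]\langle\rec(U,V)\rangle\to\hom_{S_n}$ is injective for $n$ large. So the proposal stands as a valid, somewhat more conceptual alternative, provided you actually carry out the normalization check you flag at the end.
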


For finite sets $U_1, \dots, U_m$, the space $\bigotimes_i \injr(U_i,I)$ has a $\C[S_n]$-basis indexed naturally by recollements $C$ of $U_1,
\dots, U_m$ such that $\abs{C} \leq n$.
This  is a generalization of Proposition~2.8 of \cite{De} and is proved precisely the same way. We shall use this in the proof.

\begin{proof} In the sequel, we shall abuse notation and occasionally blur the distinction between $[U]$ and $\injr(U,I)$. Consider the integral case.  Let $I= \{1, 2, \dots, n\}$.  
From the $\ev_a$ functor, we have a $S_n$-morphism  
\( [\one] \otimes [U] \to [U]\), which corresponds to an element $e \in [\one] \otimes [U] \otimes [U]$.
Let $y_i, i \in I$ denote the canonical basis for $[\one]$. 
Then  \[ {e = \sum_{i \in I, f \in \inj(U,I)} y_i \otimes f \otimes \left( af +  \frac{1}{2} \sum_{j \neq i}
(i \ j) \circ f \right) .} \]

 We compute the coefficients of  $e$ in terms of the canonical basis 
\( y_i \otimes f \otimes g,   i \in I, \  f,g \in \inj(U,I).\)
 Fix $i, f, g$.  Then  $y_i \otimes f \otimes g$
will occur in $e$ 
precisely if $f=g$ or if $f,g$  differ by a transposition starting at $i$, which corresponds to the image of $\one$ in the recollement induced by $i, f,g$.  

If $f \neq g$, then the  coefficient of $y_i \otimes f \otimes g$ is $1$.  If $f=g$, and $i \notin \im(f)$, the coefficient is  $a + \card I - \card U-1$; if $f=g$ and $i \in \im(f)$, the coefficient is $a$.  Otherwise, it is zero.

Thus, translating the basis elements into recollements as in Proposition~\ref{interpolateJM}, we can describe $[\one] \otimes [U] \to [U]$ by the  sum $\sum_{D \in \rec(\one, U, U)} Q^D(n,a) (D)$ (which by itself is a map $[\emptyset] \to [\one] \otimes [U] \otimes [U]$, but one can apply self-duality), where the $Q^D$ are polynomials. 
In general, in $\rep'(S_T, \C[T])$, the morphism $\f{h} \otimes M
\to M$ for $M=[U]$ is defined through $\sum_{D} Q_D(T,a) (D)$.  
We can check that the dAHA identity \eqref{eq: dAHA relation} holds by specializing $T \to n$ for $n$ large, and similarly for functoriality. The functors then extend to the pseudo-abelian envelope, and one then applies base extension to $\spec \mathbb{C}[T]$.
\end{proof}

\subsection{Interpolation of induction functors}

We now  construct the induction functors necessary to interpolate the standard modules.

\begin{lemma}  \label{injlemma} Let $I,J$ be finite sets, and let $S_I, S_J$ be the
corresponding permutation groups.  Then \( \indord_{S_I \times S_J}^{S_{I \sqcup J}} \left(
  \injr(V,I) \otimes \injr(J,J)\right) \simeq \injr(V \sqcup J, I \sqcup J),\) where $\sqcup$ denotes the disjoint union of sets.
\end{lemma}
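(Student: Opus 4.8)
The plan is to exhibit an explicit $S_{I\sqcup J}$-equivariant bijection between the bases of the two sides and check it respects the group action. The left-hand side $\indord_{S_I\times S_J}^{S_{I\sqcup J}}(\injr(V,I)\otimes \injr(J,J))$ is, by definition, $\C[S_{I\sqcup J}]\otimes_{\C[S_I\times S_J]}(\injr(V,I)\otimes\injr(J,J))$. A $\C$-basis for $\injr(V,I)\otimes\injr(J,J)$ is given by pairs $(f,g)$ with $f\in\inj(V,I)$ and $g\in\inj(J,J)$ (i.e. $g\in S_J$), and a basis for the induced module is then given by cosets $\sigma\cdot(f\otimes g)$ as $\sigma$ ranges over coset representatives of $S_I\times S_J$ in $S_{I\sqcup J}$. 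On the right-hand side, $\injr(V\sqcup J,I\sqcup J)$ has the basis $\inj(V\sqcup J, I\sqcup J)$, the set of injections $h\colon V\sqcup J\to I\sqcup J$.

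First I would define the comparison map on basis elements. Given $\sigma\in S_{I\sqcup J}$, $f\in\inj(V,I)$, and $g\in S_J$, send $\sigma\cdot(f\otimes g)$ to the injection $\sigma\circ(f\sqcup g)\colon V\sqcup J\to I\sqcup J$, where $f\sqcup g$ is the evident injection $V\sqcup J\to I\sqcup J$ built from $f$ on $V$ and $g$ on $J$. I would check this is well-defined: if $\sigma$ is replaced by $\sigma(\tau_I\times\tau_J)$ with $\tau_I\in S_I$, $\tau_J\in S_J$, then $(f\otimes g)$ must be replaced by $(\tau_I\times\tau_J)^{-1}(f\otimes g) = (\tau_I^{-1}f)\otimes(\tau_J^{-1}g)$, and indeed $\sigma(\tau_I\times\tau_J)\circ((\tau_I^{-1}f)\sqcup(\tau_J^{-1}g)) = \sigma\circ(f\sqcup g)$ since $(\tau_I\times\tau_J)$ acts on $I\sqcup J$ compatibly with the decomposition. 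Equivariance for the left $S_{I\sqcup J}$-action is immediate from the formula.

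Then I would check bijectivity on bases. For surjectivity: any injection $h\colon V\sqcup J\to I\sqcup J$ can be written as $\sigma\circ(f\sqcup g)$ for suitable $\sigma\in S_{I\sqcup J}$, $f\in\inj(V,I)$, $g\in S_J$ — choose $\sigma$ to move $h(J)$ onto $J$ and $h(V)$ into $I$ (possible since $|I|=|h(V)|$ forces... actually more carefully: pick $\sigma^{-1}$ mapping $h(J)$ bijectively to $J$ and the rest appropriately; this uses only that $|J|=|h(J)|$ and the injectivity of $h$), then set $g = (\sigma^{-1}h)|_J\in S_J$ and $f = (\sigma^{-1}h)|_V$, which lands in $I$ by construction. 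For injectivity, a dimension count suffices: both sides have dimension $\frac{(|I|+|J|)!}{|I|!\,|J|!}\cdot\binom{|I|}{|V|}|V|!\cdot|J|!$ on the left (index times $\dim\injr(V,I)\cdot\dim\injr(J,J)$) and $\frac{(|I|+|J|)!}{(|I|+|J|-|V|-|J|)!}$ on the right, and one verifies these agree; alternatively, injectivity follows directly by reversing the surjectivity argument since the orbit/stabilizer bookkeeping matches up. The main obstacle is the careful handling of the coset description of the induced module and confirming the "well-definedness under $S_I\times S_J$" computation; once that formula is pinned down, everything else is routine combinatorics of injections. I would present it via the explicit basis bijection rather than an abstract adjunction argument, since the equivariance is most transparent that way.
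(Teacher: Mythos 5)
Your proof is correct and takes essentially the same route as the paper's: the paper obtains the identical comparison map via Frobenius reciprocity applied to $e_f \otimes e_\sigma \mapsto e_{f \sqcup \sigma}$, proves surjectivity by noting the image contains one basis vector of the (transitive) permutation module, and concludes with the same dimension count. Your explicit coset-representative description, well-definedness check, and surjectivity construction are just a more hands-on presentation of that argument.
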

\begin{proof} 
We have a $S_I \times S_J$-morphism
\( \injr(V,I) \otimes \injr(J,J) \to \resord^{S_{I \sqcup J}}_{S_I \times S_J} \injr(V \sqcup J, I \sqcup J) \)
that sends $e_f \otimes e_{\sigma}$ for $f\colon V \to I$ and $\sigma\colon J \to J$ to $e_{f \sqcup \sigma}$.  By
Frobenius reciprocity,  we get a $S_{I \sqcup J}$-map
\( \indord_{S_I \times S_J}^{S_{I\sqcup J}} \  \left( \injr(V,I)  \otimes \injr(J,J) \right) \to  \injr(V \sqcup J, I\sqcup J), \)
which is seen to be surjective as follows.   When $\abs{V} < \abs{I}$, the image contains at least
one basis element $e_h$ for  some  $h\colon V \sqcup J \to I \sqcup J$    
gluing together some  $f\colon V \to I, \id\colon J \to J$, and the  span of
the corresponding $S_{I \sqcup J}$ orbit is seen to be
the full space.  Since the dimensions of the two vector spaces are easily checked to be equal, 
 the map is bijective.  
\end{proof}

\begin{proposition} \label{inductionst}  Fix $N,n$.  Then for   all  $t$, there is a functor
  \[\ind\colon  \rep(S_t)^{(N)} \times \repord(S_n) \to \rep(S_{t+n})^{(N+n)}\]
such that   if $A \in \rep(S_T, \C(T)), B \in \repord(S_n)$, then for  almost all $t$, we have \(\ind  (\sp_t(A),B) \simeq \sp_{t}( \ind(A,B)) \in \rep(S_{t+n})\)
and for almost all $m \in \mathbb{Z}_{\geq 0}$,
\[\indord_{S_m \times S_n}^{S_{m+n}} (\ssp_m(A),B) \simeq \ssp_{m}( \ind(A,B)) \in \repord(S_{m+n}).\]
\end{proposition}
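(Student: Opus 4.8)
The plan is to construct the interpolated induction functor by the same recollement-bookkeeping strategy used throughout the paper: realize ordinary induction as a combinatorial operation on the objects $[U]$ (really $\injr(U,I)$), show that this operation is polynomial in the rank, and then read the resulting formulas into $\rrep'(S_T)$ before passing to the pseudo-abelian envelope and base-changing. Concretely, Lemma~\ref{injlemma} tells us that on permutation modules, $\indord_{S_m \times S_n}^{S_{m+n}}(\injr(V,I_m) \otimes \injr(J,J)) \simeq \injr(V \sqcup J, I_m \sqcup J)$, so on objects the functor should send $([V], B)$ with $B \in \repord(S_n)$ to $[V \sqcup J] \otimes \const(B)$-type data; more precisely, since $B$ is a fixed finite-dimensional $S_n$-representation, we decompose $B$ into its isotypic pieces and use that $\injr(J,J) \simeq \C[S_n]$ as an $S_n \times S_n$-bimodule to write $B$ as a direct summand (cut out by an idempotent $e_B \in \C[S_n] = \End$) of a sum of copies of $\injr(J,J)$. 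The induction functor on objects is then $[V] \mapsto$ the image of the idempotent $\id_{[V \sqcup J]} \otimes e_B$ acting through the right $S_n$-action on the $J$-coordinates, which lives in $\rep(S_{t+n})^{(N+n)}$ once $[V]$ has $|V| \le N$.

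The key steps, in order: (i) fix $n$ and a presentation of each relevant $B \in \repord(S_n)$ as an image of an idempotent $e_B$ in a matrix algebra over $\C[S_n] = \End_{S_n}(\injr(J,J))$; (ii) using Lemma~\ref{injlemma} and the fact (stated just before the proof of Proposition~\ref{evinter}, the generalization of Deligne's Prop.~2.8) that $\bigotimes_i \injr(U_i,I)$ has a $\C[S_n]$-basis indexed by recollements $C$ with $|C| \le n$, express the composite ``$[V] \mapsto \indord(\injr(V,I) \otimes \injr(J,J))$, then apply $e_B$'' as a morphism in $\rep'(S_T)$ given by recollements with polynomial coefficients in $T$ --- here the right $S_n$-action on $\injr(J,J)$ interpolates because it is literally an action by a fixed finite group, independent of the rank; (iii) define $\ind$ on $\rep'(S_T)^{(N)} \times \repord(S_n)$ by these polynomial formulas, check functoriality and that the image lands in the $(N+n)$-bounded subcategory, then extend to pseudo-abelian envelopes (the idempotent $e_B$ is absorbed here) and base-change $T \to t$; (iv) verify the two compatibility statements by specialization: for $m \in \mathbb{Z}_{\ge 0}$ large (so that $\rep(S_m)^{(N)} \hookrightarrow \repord(S_m)$ and the recollement bases are genuinely bases), $\ssp_m(\ind(A,B))$ agrees with honest induction by Lemma~\ref{injlemma} and the naturality of Frobenius reciprocity, and then the ``almost all $t$'' statement follows because $\ind(\sp_t(A),B)$ and $\sp_t(\ind(A,B))$ are both obtained by specializing a single object/morphism defined over $\C[T]_{P(T)}$, and two such families agreeing at infinitely many integers agree generically.

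The main obstacle is step (ii): making precise that the \emph{right} $S_n$-action used to carve out $B$ commutes with, and can be expressed in the same ``recollement with polynomial coefficients'' language as, the interpolated structure maps, so that $e_B$ really is an endomorphism in $\rep'(S_T)^{(N+n)}$ and not just in the integral specializations. Concretely one must check that under the isomorphism of Lemma~\ref{injlemma} the right multiplication by $\sigma \in S_n$ on $\injr(J,J)$ corresponds to a permutation of the $J$-labels in $\injr(V \sqcup J, I \sqcup J)$, hence to a sum of recollements with $T$-independent ($0$/$1$) coefficients, so that arbitrary elements of $\C[S_n]$ --- and in particular $e_B$ --- act by morphisms defined over $\mathbb{Z}[T]$; granting this, everything else is the now-familiar pattern of ``polynomiality in the rank, then specialize'' together with the density of $\mathbb{Z}_{\ge 0}$ in $\A^1_{\C}$. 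A secondary (routine) point is tracking the cardinality bounds: induction against $B \in \repord(S_n)$ adds at most $n$ boxes, so the image lies in $\rep(S_{t+n})^{(N+n)}$, which must be checked at the level of which sets $U$ appear in the objects $[U]$ produced.
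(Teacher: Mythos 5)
Your proposal follows essentially the same route as the paper's proof: the paper also works on the full subcategory of pairs $([V],[J])$ with $[J]$ the regular representation, observes that the recollements $D\in\rec(J,J)$ with $\abs{D}=n$ realize right multiplication by elements of $S_n$ (so that $\indord((C),(D))=(C\sqcup D)$ interpolates with integer coefficients), checks functoriality as a polynomial identity tested at large integers, and then absorbs general $B\in\repord(S_n)$ by passing to the Karoubi envelope --- which is exactly your idempotent $e_B$ step. The ``main obstacle'' you flag is resolved precisely as you suggest, and the specialization claims are deduced from Lemma~\ref{injlemma} as in your step (iv).
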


This could have been stated more closely in line with the framework developed in the previous section, but we thought it would only obscure the meaning.

\begin{proof} We first describe the functoriality of induction via recollements. As usual, we start with the integral case.  Assume $\abs{J}=n$ and $\abs{I}=m$. Given   $C \in \rec(V,U)$  and $D \in \rec(J, J)$ with $\abs{D} =  n$,  then
\[ \indord_{S_I \times S_J}^{S_{I\sqcup J}}( (C), (D)) = (C \sqcup D)\colon \injr(V \sqcup J,I \sqcup J) \to \injr(U \sqcup J,I \sqcup J),\] i.e., $(C
\sqcup D) \in \rec(V \sqcup J, U \sqcup J)$ is induced by $(C), (D)$.  Note that such $(C),(D)$ form a basis for the relative $\hom$-spaces if $m$ is large enough (and always span them).
In view of this, we now construct the interpolated $\ind$ on the full subcategory of $\rep(S_t) \times \repord(S_n)$ consisting of objects of
the form $([V],  [J])$, by sending $([V] , [J]) \to [V \sqcup J]$ and morphisms
$(C),(D)$  to $(C \sqcup D)$; functoriality  is a polynomial condition in terms of recollements and holds everywhere by testing at large integers.   The functor extends by additivity to the Karoubi envelope of this full subcategory, which up to equivalence contains as a full subcategory $\rep(S_t) \times \repord(S_n)$.  The specialization assertion follows from Lemma~\ref{injlemma}. 
\end{proof}

\begin{remark} {It does not seem
 possible to make more than one parameter non-integral in the above
 functor, because induction from $S_m \times S_n \to S_{m+n}$ multiplies
 the dimension by $\frac{(m+n)!}{m!n!}$, which is not a polynomial in
 $n,m$; if, however, one variable is fixed, then the expression becomes
 a polynomial in the other.}  Nevertheless, it is  possible to define restriction functors $\rep(S_{t+u}) \to \rep(S_t) \boxtimes \rep(S_u)$, by sending $\f{h} \to \f{h} \otimes \unital \oplus \unital \otimes \f{h}$ and extending tensorially. Consequently one may define  consequently a left adjoint (at least generically, when the categories are semisimple) from $\rep(S_t) \boxtimes \rep(S_u)$ into the ind-completion of $\rep(S_{t+u})$.  When one parameter is in $\mathbb{Z}_{\geq 0}$ and we use the ordinary category, we do not need ind-objects. 
 \end{remark}

In the next result, we do not obtain an everywhere defined induction functor.  It is an open question to determine ``degeneracy phenomena'' where it cannot be defined.

\begin{proposition} \label{inductionht} Fix $n,N \in \e{N}$.  Then for transcendental $t$, there is a functor
  \[ \ind\colon  \rep(\h_{t})^{(N)} \times \repord(\h_n) \to \rep(\h_{t+n})^{(N+n)} \] such that $A \in \rep(\h_T), B \in \repord(\h_n)$  imply that for almost  all $m \in \mathbb{Z}_{\geq 0}$,
 \(\ind_{\h_m \otimes \h_n}^{\h_{m+n}}(\ssp_m(A), B) \simeq \ssp_{m}(\ind(A, B)) \ \in \repord(\h_{m+n}).\)
\end{proposition}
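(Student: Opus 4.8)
The plan is to build the interpolated induction functor for the dAHA categories by combining the already-constructed functor $\ind\colon \rep(S_t)^{(N)} \times \repord(S_n) \to \rep(S_{t+n})^{(N+n)}$ of Proposition~\ref{inductionst} with the extra data of the $y$-maps. Recall that an object of $\rep(\h_t)^{(N)}$ is an object $M$ of $\rep(S_t)^{(N)}$ together with a morphism $y_M\colon \f{h}\otimes M \to M$ satisfying \eqref{eq: dAHA relation}, and similarly an object of $\repord(\h_n)$ is a pair $(B, y_B)$ with $y_B\colon \f{h}_n \otimes B \to B$. On the underlying Deligne object we have no choice: we must set the underlying object of $\ind((M,y_M),(B,y_B))$ to be $\ind(M,B)$ as constructed in Proposition~\ref{inductionst}. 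So the heart of the matter is to produce, functorially in $(M,y_M)$ and $(B,y_B)$, a morphism $Y\colon \f{h}_{t+n}\otimes \ind(M,B) \to \ind(M,B)$ in $\rep(S_{t+n})^{(N+n)}$ satisfying the dАHA relation \eqref{eq: dAHA relation} at parameter $t+n$, in such a way that the construction specializes, for almost all integers $m$, to the classical $\h_{m+n}\otimes_{\h_m\otimes\h_n}(-\otimes-)$.

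First I would recall why the classical induced module carries a canonical $y$-action: in the presentation of $\h_{m+n}$ by $S_{m+n}$ and $y_1,\dots,y_{m+n}$, the subalgebra $\h_m\otimes\h_n$ sits inside as $S_m\times S_n$ together with $y_1,\dots,y_m$ (from the first factor) and $y_{m+1},\dots,y_{m+n}$ (from the second), so the induced module $\h_{m+n}\otimes_{\h_m\otimes\h_n}(A\otimes B)$ has its $y$-action determined by the $y$-action on $A\otimes B$ together with the commutation relations $\sigma y_i\sigma^{-1} = y_{\sigma(i)}$. Categorically, after the identification $\f{h}_{m+n}\cong \f{h}_m\oplus\f{h}_n$ (as $S_m\times S_n$-modules, and as in the Remark following Proposition~\ref{inductionst}, $\f{h}_{t+n}$ restricts to $\f{h}_t\boxtimes\unital \oplus \unital\boxtimes\f{h}_n$), the $y$-map on the induced object is assembled from $y_M$ and $y_B$ via the restriction/induction adjunction $\indord_{S_I\times S_J}^{S_{I\sqcup J}}(\injr(V,I)\otimes\injr(J,J))\simeq\injr(V\sqcup J,I\sqcup J)$ of Lemma~\ref{injlemma}. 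Concretely, I would express the desired $Y$ on objects $([V],y_{[V]})$, $([J],y_{[J]})$ of the relevant full subcategories as an explicit sum $\sum_D Q_D(T,\dots)(D)$ over recollections $D\in\rec(\one, V\sqcup J, V\sqcup J)$, built tautologically out of the recollection-data describing $y_{[V]}$ and $y_{[J]}$ plus the ``swap'' coming from $\sigma y_i\sigma^{-1}=y_{\sigma(i)}$; verify the dAHA relation \eqref{eq: dAHA relation} at parameter $t+n$ by the usual principle — it is a polynomial identity in $T$ with the value $a$ of the evaluation parameters as additional indeterminates, so it suffices to check it for all large integers $m$, where it reduces to the known fact that the classical induced module is an $\h_{m+n}$-module. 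Then extend by additivity and idempotent-splitting to the Karoubi envelope, cutting off to stay within $\rep(S_{t+n})^{(N+n)}$.

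The main obstacle — and the reason the statement is only asserted for transcendental $t$ and does not produce an everywhere-defined functor — is that, unlike in Proposition~\ref{inductionst}, the $y$-map on the induced object cannot be written as a recollement-combination whose coefficients are \emph{polynomial} in the rank; the obstruction is the same $\frac{(m+n)!}{m!n!}$ phenomenon noted in the Remark, now felt through the evaluation-parameter shifts $\mu_i+\rho_i$ and through the fact that the induction functor $\rep(S_t)^{(N)}\times\repord(S_n)\to\rep(S_{t+n})^{(N+n)}$ is only defined ``generically compatibly'' with specialization. So the construction will genuinely live on $\spec\C(T)$, or on a Zariski-open (parameter-dependent) locus, rather than on $\A^1_\C$ — this is where transcendentality of $t$ is used: a transcendental $t$ avoids the bad locus automatically, exactly as in the proof of Theorem~\ref{ssgeneric}. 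Concretely I would: (i) construct $Y$ over the function field $\C(T)$ using Proposition~\ref{inductionst} and Proposition~\ref{evinter}; (ii) check the dAHA relation there by specialization at large integers $m$ together with Lemma~\ref{injlemma}; (iii) observe that $Y$, being defined over $\C(T)$, extends to $\spec\C[T]_{P(T)}$ for a suitable nonzero $P$, hence specializes at every transcendental $t$; and (iv) read off the compatibility $\ind_{\h_m\otimes\h_n}^{\h_{m+n}}(\ssp_m A, B)\simeq\ssp_m(\ind(A,B))$ from the analogous statement in Proposition~\ref{inductionst} for the underlying $S$-objects plus the matching of $y$-actions, which holds for almost all $m$ since both sides are cut out by polynomial conditions that agree at all large enough integers. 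The remaining verifications (functoriality, the axioms for the Karoubi extension, staying inside the $(N+n)$-truncation) are routine polynomiality-at-large-integers arguments of the type already used repeatedly above.
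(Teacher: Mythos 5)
Your overall skeleton matches the paper's proof: invoke the Poincar\'e--Birkhoff--Witt theorem (you should do this explicitly --- it is what guarantees that the $S_{m+n}$-restriction of the classical induced module is $\indord_{S_m\times S_n}^{S_{m+n}}$ of the restrictions, so that Proposition~\ref{inductionst} supplies the underlying object), then describe the induced $y$-morphism in recollement coordinates, and verify \eqref{eq: dAHA relation} by specialization at large integers. However, there is a concrete gap in your description of the induced $y$-map. You propose to assemble $Y$ ``tautologically'' from $y_M$, $y_B$ and the swap relation $\sigma y_i\sigma^{-1}=y_{\sigma(i)}$. That is not the correct action: the embedding $\h_m\otimes\h_n\to\h_{m+n}$ preserves the $x_i$ generators, not the $y_i$ generators, and since $x_i = y_i - \frac{1}{2}\sum_{l}\sgn(i-l)(i\ l)$, the generator $y_i$ of $\h_{m+n}$ for $i\le m$ acts on the induced module as the old $y_i$ \emph{minus} $\frac{1}{2}\sum_{j'\in J}(i\ j')$, with a symmetric correction for $i\in J$. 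Writing these $\pm\frac{1}{2}$ transposition corrections into the recollement coefficients (the $E^q_{p,k}$ of the paper's proof) is the computational heart of the argument; without them your $Y$ neither satisfies \eqref{eq: dAHA relation} at parameter $t+n$ nor specializes to $\ind_{\h_m\otimes\h_n}^{\h_{m+n}}$, so the large-integer check you rely on would fail rather than succeed.

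Second, your diagnosis of why the functor is only claimed for transcendental $t$ is off. The coefficients of the induced $y$-morphism are polynomial (indeed constant) in the rank; the $\frac{(m+n)!}{m!n!}$ obstruction you cite is the reason one cannot let \emph{both} induction parameters be non-integral, which is a separate issue. The actual source of the restriction is the final verification: the dAHA relation cuts out closed subschemes $V_1,V_2,V_3$ of the relevant parameter schemes over $\C[T]$, induction is a morphism $f$ into the target, and $(f(V_1\times V_2))_r\subset (V_3)_r$ holds at all large integers $r$; by constructibility this containment then holds away from finitely many algebraic values of $t$, in particular at every transcendental $t$. Your steps (i)--(iv) are consistent with this mechanism, but the paragraph preceding them misattributes it.
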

Here $T$ is an indeterminate, and the category $\rep(\h_T, \C(T))$ is defined in the analogous way.
\begin{proof}
The   Poincar{\'e}-Birkhoff-Witt  basis theorem for the dAHA (cf. \cite{Kl}) implies that for   $ A \in \repord(\h_m)$, $B \in \repord(\h_n)$, we have \[ \res^{\h_{m+n}}_{S_{m+n}}
\ind_{\h_m \otimes \h_n}^{\h_{m+n}}( A,B) \simeq 
\ind_{S_m \times S_n}^{S_{m+n}}( \res^{\h_m}_{S_m}(A), \res^{\h_n}_{S_n}(B)).\]
By Proposition~\ref{inductionst}, we need only   describe the $y$-morphism.

First, we describe the functors appropriately in the integral case, as usual. Let   $I=\{1, 2, \dots, m\}, J=\{m+1, \dots, m+n\}$. 
Consider $A \in \repord(\h_m), B \in \repord(\h_n)$.  By adding a  direct summand with dAHA-action coming from $\ev$ if
necessary, we may assume that as a $S_m$ (resp. $S_n$) module,   $A = \bigoplus_{s \in S} [U_s] \in \repord(\h_m), B = \bigoplus_{s' \in S'} [J] \in \repord(\h_n)$  for some finite sets $U_s, S, S'$.    Now
\( \ind_{S_m \times S_n}^{S_{m+n}}\left( \bigoplus_{S} [U_s], \bigoplus_{S'} [J] \right) = \bigoplus_{S \times S'} [U_s+J]. \)

For simplicity, we assume $S$ and $S'$ consist of one element; the general case is handled similarly.  
The $y$-map $[U] \otimes [\one] \to [U]$ is described by some element \( \sum_{f, i, g} C_{f,i}^g e_f \otimes e_i \otimes e_g \in [U] \otimes [\one] \otimes [U].\)
Similarly, the map $[J] \otimes [\one] \to [J]$ is described by 
\(  \sum_{h, j, l} D_{h,j}^l e_h \otimes e_j \otimes e_l,\)
and the map $[U + J] \otimes [\one] \to [U+J]$ can be
described by some
\(  \sum_{p, k, q} E^q_{p,k} e_p \otimes e_k \otimes e_q.\)
Given $C_{f,i}^g, D_{h,j}^l$, we need to compute $E^q_{p,k}$.    Given the action of $y_i$ $(i 
\in I)$ on
$[U]$, to get an action of $y_i \ (i \in I)$ on $[U+J] $ we must add
$-\frac{1}{2}$ times the action of the cycles $(i \ j')$ for $j' \in
[m+1, n+m]$.  We similarly get the action of $y_{j}$ ($j \in J$) on $[U+J]$  using
the action of $y_j$ ($j \in J$) on $[U]$ and additional 2-cycles, by the definition of the homomorphism $\h_m \otimes \h_n \to \h_{m+n}$, which preserves the $x_i$ generators. 

Given $k \in I+J$ corresponding to the image of $\one$, $p \in \inj(U+J, I+J), q \in \inj(U+J, I+J)$, we compute $E^q_{p,k}$.  By $S_{I+J}$-invariance, assume $p$ comes from pasting $p'\colon U \to I$, $p'' = \id_J\colon J \to J$.  Suppose first $k \notin \im(p'') = J \subset I+J$.  Then  
 $E^q_{p,k}  = C^{q|_U}_{p',k}$ if $q|_J = \id_{J}$.  If $q|_J$  varies from the identity by a transposition starting at  $k$, then $E^q_{p,k} = -\frac{1}{2}$, and otherwise $E^q_{p,k}=0$.  
Now suppose $k \in \im(p'')=J$.  Then 
$E^q_{p,k} = D^{q|J}_{\id_J, k}$ if $q |_U = p|_U$, $E^q_{p,k}  = \frac{1}{2}$ if $q |_U, p|_U$ differ by a transposition starting at $k$ and $q|_J = \id_J$, and $E^q_{p,k}=0$ otherwise.  
 Thus, we have described the induced $y$-morphism in terms of recollements, and  interpolation can proceed.   

Finally, the dAHA identity \eqref{eq: dAHA relation} on each factor describes closed subschemes $V_1, V_2$ of some coproduct of affine spaces over $\C[T]$, and induction is a morphism $f$ into some coproduct of affine spaces, which has a closed subscheme $V_3$ from \eqref{eq: dAHA relation} too.  
Now, $(f(V_1 \times V_2))_r \subset (V_3)_r$ for $r \in \mathbb{Z}_{\geq 0}$ large, so we can apply the usual constructibility argument to see that the induced $y$-morphism satisfies \eqref{eq: dAHA relation} for all but possibly finitely many algebraic $t$. \end{proof}
This functor can  be iterated to multiple factors, a fact that we shall use below.  We shall keep the same notation.



\subsection{Simple objects in $\rep(\h_t)$}

\subsubsection{Interpolated standard modules}
Fix $N$.  Let $\pi\colon l_2 + \dots + l_k$ be an unordered partition of    $l \leq N$.  Then for $\bd{a} =(a_1, \dots, a_k) \in \t_k^*$, set \[M^t(\bd{a})_{\pi} = \ind^t_{t-l, l_2, \dots, l_k}( \ev_{a_1 + \rho_1}\unital, \dots, \ev_{a_k + \rho_k}\unital) \in \rep(\h_t ),\] where $\unital = [\emptyset]$.  This initially appears to be defined for $t$ transcendental, but in general we may define
\[ M^t(\bd{a})_{\pi} = \sp_t\left( \ind^T_{T-l, l_2, \dots, l_k}( \ev_{a_1 + \rho_1}\unital, \dots, \ev_{a_k + \rho_k}\unital)\right) \in \rep(\h_t ) .\] 
 By the construction of   Proposition~\ref{inductionht}, there is a morphism $s\colon \A_{\C}^1 \times_{\C} \A_{\C}^k \to \ob$, where  $\ob$ is the object scheme of $\delh^{(M, N)}$ for $M,N$ large, such that $s(t,a_1, \dots, a_k) = M^t(a_1, \dots, a_k)_{\pi}$.  

\newcommand{\ogl}{\mathcal{O}(\mathfrak{gl}_k)}

\subsubsection{Classification}

\begin{theorem} \label{bigthmyeah}
\begin{enumerate}
\item  For all transcendental $t$, every simple object in $\rep(\h_t)$ is a quotient of some $M^t(\bd{a})_\pi$.
\item Given an unordered partition $\pi$ and $t$ transcendental, the  
 set of $\bd{a} \in \mathbb{C}^k$ with $M^t(\bd{a})_\pi$ not simple is contained in a finite union of hypersurfaces. 
\end{enumerate}
\end{theorem}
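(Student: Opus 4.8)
The plan is to transfer Zelevinsky's classification (Theorem~\ref{zv}) from the integer case to the transcendental case by a constructibility argument, exactly in the spirit of \S\ref{secpar}. For part (1), I would argue as follows. Work inside the finite-type $\A_{\C}^1$-category $\delh^{(M,N)}$ for $M,N$ large enough to contain all the relevant standard modules and all simple objects of $\rep(\h_t)^{(N)}$ (that $\rep(\h_t)$ is exhausted by the $\rep(\h_t)^{(N)}$ reduces the problem to a fixed cutoff). The interpolated standard modules are cut out by the morphism $s\colon \A_{\C}^1 \times_{\C} \A_{\C}^k \to \ob$ constructed after Proposition~\ref{inductionht}, as $\pi$ ranges over the finitely many unordered partitions of integers $\leq N$. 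The property ``$X \in \mathcal{C}_{\b{t}}$ is a simple object that is \emph{not} a quotient of any $s(t,\bd a)_\pi$'' should be encoded by a constructible subset of $\ob$: simplicity is constructible by Proposition~\ref{simpleisgeneric}, and ``being a quotient of some object in the image of $s$'' is constructible because it is the image under $\cod$ of the locus of epimorphisms in $\mor$ whose domain lies in the (constructible) image of $s$ — here one uses Proposition~\ref{injisgeneric} for the $Epi$ locus and Chevalley's theorem. Call the resulting constructible ``bad'' locus $\mathrm{Bad} \subset \ob$. By Zelevinsky (Theorem~\ref{zv}(1)) together with the specialization compatibility in Propositions~\ref{inductionst} and \ref{inductionht}, for all sufficiently large $m \in \mathbb{Z}_{\geq 0}$ one has $\mathrm{Bad}_{\b{m}} = \emptyset$: every simple $\h_m$-module is the simple quotient of some classical standard module $M^m(\mu)_\pi$, and $\ssp_m$ of the interpolated standard module agrees with the classical one. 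Since $\{m \in \mathbb{Z}_{\geq 0} : \mathrm{Bad}_{\b{m}} = \emptyset\}$ is cofinite, hence Zariski dense in $\A_{\C}^1$, and the set of $t$ with $\mathrm{Bad}_{\b{t}} = \emptyset$ is constructible (Chevalley applied to $\mathrm{Bad} \to \A_{\C}^1$), it contains a dense open set, and in particular every transcendental $t$. This gives (1).

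For part (2), fix $\pi$ and transcendental $t$. Consider the section $s_t\colon \A_{\C}^k \to \ob_{\b{t}}$, $\bd a \mapsto M^t(\bd a)_\pi$, landing in the fiber over $t$. By Proposition~\ref{simpleisgeneric}, $Sim_{\b{t}} \subset \ob_{\b{t}}$ is constructible, so $s_t^{-1}(\ob_{\b{t}} - Sim_{\b{t}})$ is a constructible subset $W \subset \A_{\C}^k$ — the locus where $M^t(\bd a)_\pi$ fails to be simple. To show $W$ lies in a finite union of hypersurfaces, it suffices to show $W$ is not Zariski dense in $\A_{\C}^k$, i.e.\ that its closure is a proper closed subset. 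This in turn follows if we exhibit a \emph{single} $\bd a$ (or a dense set of $\bd a$) for which $M^t(\bd a)_\pi$ is simple, because then $W$ omits a nonempty open set; but to make the dimension count work cleanly I would instead again go through specialization: run the same two-variable constructibility argument with both $t$ and $\bd a$ varying, using that the full object scheme of $\delh^{(M,N)}$ base-changed along $\A_{\C}^1 \times \A_{\C}^k \to \A_{\C}^1$ is of finite type, to produce a constructible ``non-simple standard module'' locus $W' \subset \A_{\C}^1 \times \A_{\C}^k$ with $W = W'_{\b t}$. Zelevinsky's theorem tells us that for integer $m$ and $\mu$ with $\mu + \sum l_i\epsilon_i \in D_k$, the standard module $M^m(\mu)_\pi$ has a unique simple quotient, and in fact is \emph{irreducible} for $\mu$ in a dense (complement-of-hyperplanes) subset — this is the part of the classical theory I would need to invoke, and it shows $W'_{\b m}$ is contained in a finite union of hypersurfaces of bounded degree for all large $m$. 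Then the constructible set $W'$, whose fibers over a Zariski-dense set of points of $\A_{\C}^1$ are contained in hypersurfaces of degree $\leq d$, must itself be contained in a hypersurface-bundle of degree $\leq d$ (one can phrase this via the constructibility of $\{(\text{coefficients of a degree-}d\text{ equation}) : W' \text{ lies in its zero locus over the fiber}\}$), and restricting to $t$ gives (2).

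The main obstacle, I expect, is the bookkeeping required to certify that the ``bad loci'' $\mathrm{Bad}$ and $W'$ are genuinely constructible \emph{and} that their formation commutes with base extension to $\overline{k(s)}$ — i.e.\ verifying at the scheme-theoretic level that ``$X$ is a quotient of an object in the image of $s$'' is a Chevalley image and not something more delicate, and that the cutoffs $M,N$ can be chosen uniformly. One has to be careful that passing to quotients stays within $\delh^{(M,N)}$ (quotients of objects of bounded size have bounded size, so this is fine for $M,N$ large, but it must be said), and that the specialization functors $\ssp_m$ intertwine interpolated and classical standard modules for \emph{cofinitely many} $m$, which is exactly the content of Propositions~\ref{inductionst} and~\ref{inductionht} but needs to be chased through the iterated induction. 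A secondary subtlety for (2) is extracting a uniform degree bound $d$ on the classical hypersurfaces; if Zelevinsky's theory does not obviously give this, one can instead argue more crudely that since $W'$ is constructible and its fiber over infinitely many integers is proper in $\A_{\C}^k$, $W'$ is not dense, hence each fiber $W'_{\b t}$ over a dense open set of $t$ is proper closed, and a proper closed subset of $\A_{\C}^k$ is contained in a finite union of hypersurfaces — which is all part (2) asserts.
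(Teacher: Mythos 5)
Your overall architecture---encode the desired property as a constructible locus over $\A^1_{\C}$ (resp.\ $\A^1_{\C}\times_{\C}\A^k_{\C}$), verify it at large integers via the classical theory and the specialization compatibilities, and spread out---is exactly the paper's. But the proposal leans on one input it does not have. For part (2) you write that ``Zelevinsky's theorem tells us that \dots the standard module \dots is \emph{irreducible} for $\mu$ in a dense (complement-of-hyperplanes) subset.'' Theorem~\ref{zv} asserts only that $M^n(\mu)_\pi$ has a \emph{unique simple quotient} $L^n(\mu)_\pi$; generic irreducibility of the standard module itself is a separate, nontrivial statement, which the paper must prove as Lemma~\ref{istilldonthavemystslaptop}. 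Its proof goes through the Arakawa--Suzuki functor $F_\lambda\colon \ogl\to\repord(\h_n)$: for $\bd{a}$ antidominant with $\bd{a}+(n-l,l_2,\dots,l_k)$ dominant, the Verma module $V(\bd{a})$ is irreducible, so $F_\lambda(V(\bd{a}))=M^n(\bd{a})_\pi$ coincides with $F_\lambda(W(\bd{a}))=L^n(\bd{a})_\pi$ and is simple; this places the non-simple locus in a countable union of hyperplanes, upgraded to a finite union by constructibility (Proposition~\ref{simplicityisgeneric}). Without this input your integer fibers $W'_{\b{m}}$ are not known to be proper, and the dimension count for (2) collapses.

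Two smaller points. In part (1) you restrict to the finitely many $\pi$ with $\abs{\pi}\le N$ and assert $\mathrm{Bad}_{\b{m}}=\emptyset$ because every simple $\h_m$-module is a quotient of some standard module; but $\mathrm{Bad}$ lives in the object scheme of the truncation $\delh^{(M,N)}$, so you must know that a simple object of $\repord(\h_m)^{(N,N)}$ is a quotient of a standard module with $\abs{\pi}\le N$ specifically---otherwise the needed standard module may fall outside your finite list and outside the finite-type piece. This is precisely what part (2) of Theorem~\ref{zv} supplies ($L^m(\mu)_\pi$ contains the $S_m$-irreducible attached to $\pi$, forcing $\abs{\pi}\le N$), and the paper explicitly flags this as the crucial use of that statement. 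Finally, in both parts you pass from ``the good locus contains a dense open subset of $\A^1_{\C}$'' to ``it contains every transcendental $t$''; a dense open subset of $\A^1_{\C}$ omits finitely many closed points which could a priori be transcendental. One needs either that everything is defined over $\mathbb{Q}$ or, as the paper argues, $\mathrm{Gal}(\C/\mathbb{Q})$-equivariance, so that validity at one transcendental $t$ propagates to all.
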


\begin{proof} We shall prove this in steps.  The first part will follow from the next lemma.  
\begin{lemma}
 If $N \in \mathbb{Z}_{\geq 0}$, then outside a finite set $S_N \subset \C$ of algebraic numbers, any simple object in $\rep(\h_t, \C)^{(N,N)}$ is a quotient of some $M^t(\bd{a})_{\pi}$ with $\abs{\pi} \leq N$.
\end{lemma}

\begin{proof} It is enough to prove this for each $N$. Let $Sim$  be the constructible subset of the object scheme $\ob$ of $\delh^{(N,N)}$ as in Proposition~\ref{simplicityisgeneric},  and let $Ind \subset \ob  $ be the constructible set   parametrizing the images of $M^t(\bd{a})_{\pi}$ for $\abs{\pi} \leq N$, $\bd{a} \in \t_k^*, t \in \A_{\C}^1$.  Using  Proposition~\ref{injisgeneric}, we see that there is a constructible $Q \subset \ob$ with $Q_{\b{s}}$ consisting of quotients of objects  in $Ind_{\b{s}}$.  Now $Q_{\b{s}} \supset 
Sim_{\b{s}}$ for $s \in \mathbb{Z}_{\geq 0}$ sufficiently large by parts (1) and (2) of Theorem~\ref{zv}, so the statement holds generically.  Thus it holds at at least one transcendental $t$, so in view of the action of the Galois group $\mathrm{Gal}(\mathbb{C}/\mathbb{Q})$, at all transcendental $t$.
\end{proof}

For the next part, we prove:
\begin{lemma} \label{istilldonthavemystslaptop} Let $\pi$ be an unordered partition of $N < n$.  Then the set of $\bd{a} \in \C^k$ with $M^n(\bd{a})_{\pi} \in \repord(\h_n)$ not simple is contained in a finite union of hyperplanes.
\end{lemma}

First, we review  some known results.  Fix $n$.  Given $\lambda \in \t_k^*$, there is an \emph{Arakawa-Suzuki functor}, constructed in \cite{AS} for $\mathcal{O}(\mathfrak{sl}_k)$ and described  slightly more conveniently for us in \cite{Su}, \(F_{\lambda}\colon \ogl \to \repord(\h_n),\)
 where $\ogl$ is the \emph{BGG category} associated to the Lie algebra $\mathfrak{gl}_k$. {This is defined similarly to the category $\mathcal{O}$ associated to a semisimple Lie algebra, cf., e.g., \cite{Ga}.}  In detail, let $\frak{t}_k, \frak{n}_k$ be the subalgebras of upper-triangular and diagonal matrices, respectively.  An object $X \in \ogl$ is a representation of $\mathfrak{gl}_k$ which is
finitely generated over the enveloping algebra of $\frak{gl}_k$, semisimple as a representation of $\frak{t}_k$, and  locally finite under the action of $\frak{n}_k$. 

    For $\lambda \in  \t_k^*$, let $V(\lambda) \in \ogl$ be the Verma module   and $W(\lambda)$ its unique simple quotient.  If $\lambda = \mu + (n-l)\epsilon_1 + l_2 \epsilon_2 + \dots + l_k \epsilon_k$ is dominant, then by Theorems 3.2.1 and 3.2.2 of \cite{Su} we have \( F_{\lambda}(V(\mu))
= M^n(\mu)_\pi \) and \( F_{\lambda}(W(\mu)) = L^n(\mu)_{\pi} \  \text{or} \ 0 .\)
Recall that $L^n(\mu)_\pi$ is the unique simple quotient, as in Theorem~\ref{zv}.

\begin{proof}[Proof of Lemma~\ref{istilldonthavemystslaptop}] 
Consider the set of $\bd{a}$ such that $\bd{a} + (n-l, l_2, \dots, l_k)$ is dominant and $\bd{a}$ is antidominant.  The set of such $\bd{a}$ is a countable union of hyperplanes in $\A_{\C}^k  = \t_k^*$.  For such $\bd{a}$, the Verma module $V( \bd{a} )$ is irreducible (cf. \cite{Ga}) and \( W( \bd{a}  ) \simeq V( \bd{a}  ),\)
so it follows that \[F_{\bd{a} + (n-l, l_2, \dots, l_k)}(W(\bd{a}  )) \simeq
F_{\bd{a} + (n-l, l_2, \dots, l_k)}(V(\bd{a}  )) \simeq
 M^n(\bd{a} )_{\pi}   \neq 0.\]  Hence $ M^n(\bd{a})_{\pi} \simeq L^n(\bd{a})_{\pi}$, which is simple.  Thus the set of such $\bd{a}$ is contained in a \emph{countable} union of hyperplanes.  However, in view of Proposition~\ref{simplicityisgeneric}, the set of such $\bd{a}$ is constructible and thus contained in a finite such union.
\end{proof}

We now finish the proof of Theorem~\ref{bigthmyeah}.  Given $\pi$, there is a constructible set $Const \subset \A_{\C}^1 \times_{\C} \A_{\C}^k  $ such that the fiber over $t \in \mathbb{C}$ corresponds to $\bd{a} \in \C^k$ with $M^t(\bd{a})_\pi \in \rep(\h_t)^{(N,N)}$ simple; this is a consequence of Proposition~\ref{simplicityisgeneric}.  Then $\dim(\overline{Const}_n) \leq k-1$ when $n$ is a large integer by the previous lemma.  The fibers of $\overline{Const}$ have constant dimension on an open dense subset of $\A^1_{\C}$, so it follows that $Const_{t}$ is contained in a subvariety of proper codimension for almost all $t$.  In particular, this must include one transcendental $t$, and we can use the same Galois group argument as before.
\end{proof}

\newtheorem{claim}{Claim}

In the above proof, the second part of  Theorem~\ref{zv} was crucial, because it gave a classification of the irreducibles in the truncated categories $\repord(\h_n)^{(N)}$ consisting of modules whose $S_n$-restrictions contain irreducibles associated to Young diagrams with at most $N$ squares below the first row.  

\section{Wreath product categories}

\subsection{Definition of $\rep(\Al_t)$}
Let $A$ be a unital associative algebra over an algebraically closed field $k$ of characteristic zero.  
 Let $r\colon \f{h} \to \f{h} \otimes \f{h}$ be the dual to the algebra bilinear form   $\rho\colon \f{h} \otimes \f{h} \to \f{h}$, where $\f{h}$ is the regular representation of $S_n$.  We shall abbreviate $S_n \ltimes A^{\otimes n}$ to $\Al_n$ for ease of notation.

Consider the definition of a module $M$ over $\Al_n$:  $M$ is  a representation of $S_n$, together with linear maps $y_i\colon A \otimes M \to M$ for $1 \leq i \leq n$ that send $a \otimes m$ to $(1 \otimes 1 \otimes \dots \otimes a \otimes \dots \otimes 1) m$, where the $a$ is in the $i$-th place.  If $a \in A$, write $y_{i,a} \in \hom_{\mathbb{C}}(M,M)$ for the map $m \to y_i( a \otimes m)$.

  Then by definition, $\sigma y_{i,a} \sigma^{-1} = y_{\sigma(i),a}$ for $\sigma \in S_n$ and $y_{i,a} \circ y_{i,b} = y_{i,ab}$.  Also, $y_{i, 1} = \id_M$.  Finally,  
\( [y_{i,a}, y_{j,b}] =  0  \ \text{if } i \neq j \) and  \( [y_{i,a}, y_{i,b}]  = y_{i,[a,b]}  .\)
Conversely,  a system of maps $y_{i,a}$ for $1 \leq i \leq n, \ a \in A$ satisfying the above  conditions on a $S_n$-representation $M$ yields an action of $\Al_n$.  The next result now follows.

\begin{proposition} An $\Al_n$-module $M$ is   an $S_n$-representation $M$ together with maps $y_a\colon \f{h} \otimes M \to M$ for $a \in A$ such that the association $a \to y_a$ is $k$-linear, $y_1 = s \otimes \id_M$ for $s\colon \f{h} \to \unital$ the standard morphism, and such that $y$ satisfies the following two further conditions:
\begin{equation} \label{eq:firstcondition}  y_a \circ (\id_{\f{h}} \otimes y_b) \circ r = y_{ab}\colon \f{h} \otimes M \to M \end{equation}
\begin{equation}  \label{eq:secondcondition}
y_a \circ  (\id_{\f{h}} \otimes y_b) - y_b \circ (\id_{\f{h}} \otimes y_a) \circ P_{1,2}  = y_{[a,b]} \circ (\rho \otimes \id_{\f{h}} ) \colon \f{h} \otimes \f{h} \otimes M \to M.
\end{equation}
\end{proposition}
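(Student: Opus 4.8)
The plan is to read the statement as a dictionary: the paragraph immediately preceding it already unpacks an $\Al_n$-module as an $S_n$-representation $M$ equipped with operators $y_{i,a}\in\End_k(M)$ ($1\le i\le n$, $a\in A$) subject to $k$-linearity in $a$, $\sigma y_{i,a}\sigma^{-1}=y_{\sigma(i),a}$, $y_{i,1}=\id_M$, $y_{i,a}\circ y_{i,b}=y_{i,ab}$, $[y_{i,a},y_{j,b}]=0$ for $i\neq j$, and $[y_{i,a},y_{i,b}]=y_{i,[a,b]}$. It remains only to check that repackaging the $y_{i,a}$ into morphisms $y_a\colon\f{h}\otimes M\to M$ converts these relations, one by one, into $k$-linearity of $a\mapsto y_a$, the identity $y_1=s\otimes\id_M$, and the two conditions \eqref{eq:firstcondition} and \eqref{eq:secondcondition}; both directions of the equivalence then follow at once.

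First I would fix the standard basis $\{e_i:i\in I\}$ of $\f{h}=\injr(\one,I)$, $I=\{1,\dots,n\}$, on which $S_n$ acts by $\sigma\cdot e_i=e_{\sigma(i)}$, and record that the Frobenius structure maps are given on this basis by $\rho(e_i\otimes e_j)=\delta_{ij}e_i$, $r(e_i)=e_i\otimes e_i$, and $s(e_i)=1$ (forced by the orthonormality of the standard basis). The key elementary point is that, for a fixed $S_n$-representation $M$, giving an $S_n$-morphism $u\colon\f{h}\otimes M\to M$ is the same as giving a family $(u_i)_{i\in I}$ in $\End_k(M)$ with $\sigma u_i\sigma^{-1}=u_{\sigma(i)}$, via $u_i(m)=u(e_i\otimes m)$; equivalently, writing $\f{h}=\indord_{S_{n-1}}^{S_n}\unital$ with $S_{n-1}=\mathrm{Stab}(1)$, Frobenius reciprocity identifies $\hom_{S_n}(\f{h}\otimes M,M)$ with $\End_{S_{n-1}}(M)$, whose element is $u_1$ and from which the remaining $u_i$ are recovered by conjugation. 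This correspondence is natural in $M$ and bijective; applied to the $k$-linear family $a\mapsto y_a$ it yields $k$-linear families $(y_{i,a})$ with $\sigma y_{i,a}\sigma^{-1}=y_{\sigma(i),a}$, i.e.\ exactly the first defining relation.

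The remaining step is a purely computational term-by-term translation, carried out by evaluating on basis vectors. The identity $y_1=s\otimes\id_M$ says $y_{i,1}(m)=s(e_i)m=m$, i.e.\ $y_{i,1}=\id_M$. Evaluating $y_a\circ(\id_{\f{h}}\otimes y_b)\circ r$ on $e_i\otimes m$ gives $e_i\otimes e_i\otimes m\mapsto e_i\otimes y_{i,b}(m)\mapsto y_{i,a}(y_{i,b}(m))$, while the right-hand side of \eqref{eq:firstcondition} gives $y_{i,ab}(m)$; so \eqref{eq:firstcondition} is exactly $y_{i,a}\circ y_{i,b}=y_{i,ab}$. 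Evaluating the left side of \eqref{eq:secondcondition} on $e_i\otimes e_j\otimes m$ produces $y_{i,a}(y_{j,b}(m))$ from the first term and, after $P_{1,2}$ transposes the two $\f{h}$-factors, $y_{j,b}(y_{i,a}(m))$ from the second; the right side, after $\rho$ contracts $e_i\otimes e_j$ to $\delta_{ij}e_i$, gives $\delta_{ij}y_{i,[a,b]}(m)$; hence \eqref{eq:secondcondition} is precisely the pair $[y_{i,a},y_{j,b}]=0$ for $i\neq j$ and $[y_{i,a},y_{i,b}]=y_{i,[a,b]}$. Since the correspondence of the preceding paragraph is a bijection and the five relations it now matches up are exactly those the preceding discussion shows define an $\Al_n$-action on $M$, the asserted equivalence follows in both directions (and a morphism of modules, being an $S_n$-morphism commuting with all $y_{i,a}$, is the same as an $S_n$-morphism commuting with all $y_a$). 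There is no serious obstacle here; the only point demanding care is the identification of $S_n$-morphisms out of $\f{h}\otimes M$ with equivariant families $(y_i)$, together with the bookkeeping of $\rho$, $r$, $s$ on the standard basis, so that the Kronecker delta produced by $\rho$ and the transposition $P_{1,2}$ land on the correct indices in \eqref{eq:secondcondition}.
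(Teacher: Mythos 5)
Your proof is correct and follows exactly the route the paper intends: the paper offers no explicit argument (it simply says "the next result now follows" after listing the relations on the $y_{i,a}$), and your basis-level identification of $\hom_{S_n}(\f{h}\otimes M,M)$ with equivariant families $(y_{i,a})$, together with the term-by-term translation of the five relations into $y_1=s\otimes\id_M$, \eqref{eq:firstcondition}, and \eqref{eq:secondcondition}, is precisely the omitted verification.
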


All this is in a category-friendly stage, and it is now clear how to make the following definition:

\begin{definition}[P. Etingof, personal communication]
The category $\rep(\Al_t)$ is defined as follows.  Objects in $\rep(\Al_t)$ are objects $M \in \rep(S_t)$ endowed with a morphism
$ y\colon A \otimes \f{h} \otimes M \to M, $
where $A$ is viewed as a vector space, 
such that the induced morphisms $y_a\colon \f{h} \otimes M \to M$ satisfy \eqref{eq:firstcondition} and \eqref{eq:secondcondition} above, and $y_1 = s \otimes \id_M$.
Morphisms in $\rep( \Al_t)$ are morphisms in $\rep(S_t)$ commuting with the $y$ maps.
\end{definition}

Again, it is enough to specify $y_a$ for $a$ generating $A$.

\begin{proposition} If $A$ is finitely generated, there is a category over $\A_{k}^1$, $\rrep(\Al)$, whose fiber at each $t \in k$ is $\rep(\Al_t)$ as defined above.  If $R, S \in \mathbb{Z}_{\geq 0}$, there are subcategories $\rrep(\Al)^{(R,S)}$ of finite type.
\end{proposition}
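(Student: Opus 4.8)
The plan is to mimic, essentially verbatim, the construction of $\delh$ (the $\A^1_{\C}$-category $\mathfrak{R}ep(\mathscr{H})$) carried out earlier for the dAHA, since the defining data of $\rep(\Al_t)$ differs from that of $\rep(\h_t)$ only in the shape of the equations \eqref{eq:firstcondition} and \eqref{eq:secondcondition}, which are again polynomial conditions in the parameter once everything is expressed via recollements. First I would recall that $\del = \del^{\C[T]}_T$ is a pseudo-abelian envelope of an $\A^1_{\C}$-category of finite type, and that by Proposition~\ref{karoubiandps} and the discussion of functor categories and comma categories in \S\ref{secpar}, one can form, over a suitable coproduct of affine spaces $E$ over $\A^1_{\C}$, an $\A^1_{\C}$-category whose object scheme parametrizes pairs $(M, y)$ consisting of an object $M$ of $\del$ together with a morphism $y \colon A \otimes \f{h} \otimes M \to M$. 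Here $A$ is a fixed finite-dimensional-up-to-generators vector space: since $A$ is finitely generated, choose generators $a_1, \dots, a_r$, so that specifying $y$ amounts to specifying finitely many morphisms $y_{a_i} \colon \f{h} \otimes M \to M$ in $\del$, i.e.\ finitely many sections into copies of the morphism scheme of $\del$. This is literally the same kind of construction as $\f{C}^{\mathcal{Z}}$ for a finite diagram category $\mathcal{Z}$.

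Next I would cut out the locus satisfying the relations. The conditions $y_1 = s \otimes \id_M$, \eqref{eq:firstcondition}, and \eqref{eq:secondcondition} each equate two morphisms built from $y$, composition, the symmetry $P_{1,2}$, the coevaluation-type map $r$, the multiplication $\rho$ of $\f{h}$ (which, like the Jucys--Murphy endomorphism of Proposition~\ref{interpolateJM} and the evaluation morphism of Proposition~\ref{evinter}, is given by recollement data with polynomial coefficients in $T$), and the structure maps of $\del$. Expressing both sides in terms of the recollement bases, each relation becomes a system of polynomial identities in $T$ and the coordinates of the $y_{a_i}$, hence defines a closed subscheme; intersecting these over all $i$ and over a spanning set of inputs gives a closed subscheme $\ob'$ of the object scheme $E$. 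We also need to impose that the relations $y_{a}\circ y_{b}$-type expressions only need be checked on a spanning set of basis morphisms, exactly as in the proof of Proposition~\ref{inductionst}; bilinearity in $a$ then extends everything from the generators to all of $A$. The morphism scheme is cut out similarly: it parametrizes $\del$-morphisms between two such $(M,y)$'s commuting with the $y$-maps, which is again a closed condition inside the morphism scheme of $\del^E \times_E \del^E$. Call the resulting $\A^1_{\C}$-category $\rrep(\Al)$. By construction $\R_{\A^1_{\C}}$ applied to its fiber at $t \in k$ recovers $\rep(\Al_t)$.

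For the finite-type subcategories $\rrep(\Al)^{(R,S)}$, I would restrict the underlying object $M$ to lie in $\rep(S_t)^{(R,S)} = \R(\del^{(R,S)})$, which by the remark in \S4 (the filtration discussion) is the Karoubi envelope of an $\A^1_{\C}$-category of finite type. The same cut-out-by-relations procedure applied over $\del^{(R,S)}$ instead of $\del$ then yields $\rrep(\Al)^{(R,S)}$; since $\del^{(R,S)}$ is of finite type and we are only imposing finitely many closed conditions involving finitely many generators $a_i$ of $A$ (this is exactly where finite generation of $A$ is used — an infinitely generated $A$ would force infinitely many morphism-sections and break finite type), the object and morphism schemes remain of finite type over $\A^1_{\C}$.

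The main obstacle I expect is bookkeeping rather than conceptual: namely, verifying carefully that the morphisms $r$, $\rho$, and the symmetry $P_{1,2}$ really are morphisms \emph{over} $\A^1_{\C}$ with recollement/polynomial-in-$T$ descriptions, so that equations \eqref{eq:firstcondition}–\eqref{eq:secondcondition} genuinely define \emph{closed} (hence Zariski-nice) subschemes, and checking that it suffices to impose the relations on a finite spanning set of inputs rather than on all objects. Since $r$ is dual to $\rho$ and self-duality in $\del$ is already a natural transformation over $\spec R$ (as noted in the tensor-structure discussion), and since $\rho$, being built from Deligne's recollement formalism, has the same status as the Jucys--Murphy endomorphism $\Omega$ of Proposition~\ref{interpolateJM}, this obstacle is surmountable by the arguments already in place; one tests the polynomial identities at large integers $n$, where they reduce to the genuine $S_n \ltimes A^{\otimes n}$ relations, exactly as in the proofs of Propositions~\ref{evinter} and \ref{inductionht}. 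I would therefore only sketch this and refer to those proofs for the pattern.
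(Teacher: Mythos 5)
Your construction follows the same route as the paper's proof, which is itself only a short remark: parametrize the $y$-datum by finitely many sections into copies of the morphism scheme (one for each generator $a_i$ of $A$), express everything through recollements with coefficients polynomial in $T$, and cut out the relations as closed conditions, testing the identities at large integers.

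There is, however, one genuine gap, and it is exactly the point on which the paper's proof spends its words. You write that ``we are only imposing finitely many closed conditions involving finitely many generators $a_i$ of $A$,'' locating the use of the hypothesis in the number of \emph{generators}. But the hypothesis is that $A$ is finitely \emph{generated}, not finitely \emph{presented}, and for noncommutative associative algebras these are not the same. Once $y_a$ is prescribed only on generators, the requirement that $a \mapsto y_a$ be a well-defined linear map satisfying \eqref{eq:firstcondition} imposes one polynomial identity on the coordinates of the $y_{a_i}$ for each relation among the generators of $A$, and there may be infinitely many such relations. Your construction in fact survives this -- an arbitrary intersection of closed subschemes is closed, and a closed subscheme of a finite-type scheme over a noetherian base is again of finite type -- but your explicit finiteness claim is unjustified as stated. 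The paper handles this head-on: it first notes the result is evident when $A$ is finitely presented, and then observes that on an object of $\rep(S_t)^{(R,S)}$ the $y$-morphism is described by a number of recollement coordinates bounded in terms of $R$ and $S$, so the (possibly infinite) family of polynomial relations generates a finitely generated ideal by the noetherian property of the polynomial ring, whence finitely many of them suffice. You should either add this noetherian observation or explicitly restrict the first part of your argument to finitely presented $A$.
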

\begin{proof} The result is now evident from the remark preceding the proposition when $A$ is finitely \emph{presented}. However, when $M$ is of fixed size, e.g.,  is an object of $\rep(S_t)^{(R,S)}$, then the $y$-morphism is described by certain parameters in $k$ depending on recollements whose number is bounded in terms of $R$ and $S$.  The relations imposed on $y$ are simply polynomial relations in these parameters, and by the noetherian property of a polynomial ring, it is clear that we need only impose finitely many relations on $y$ (where finite depends on $R$ and $S$).
\end{proof}
\newcommand{\cchar}{\mathrm{Char}}

We now carry out the interpolation of certain  important structures. 

\subsection{$V_\chi$ functors} 
Fix $n \in \mathbb{Z}_{\geq 0}$.  
Let $A$ be a finitely generated (hence finitely presented) commutative algebra over $k$.  In this case, we can get the simple objects in $\repord(\Al_n)$ by a certain process of parabolic induction, which begins with an analog of the evaluation homomorphism for $\h_n$.
 
Let $\cchar(A)$ be the collection of characters of $A$, i.e., algebra-homomorphisms $A \to k$. For $\chi \in \cchar(A)$, we define the functor $V_\chi\colon \repord(S_n) \to \repord(\Al_n)$ as follows.  
Choose    $M \in \repord(S_n)$.  Define $V_{\chi}(M)$ to be $M$ as a $S_n$-module, and for $a \in A$, let $1 \otimes \dots \otimes a \otimes \dots \otimes 1 \in A^{\otimes n}$ act on $M$ via scalar multiplication by $\chi(a)$.   
If $f\colon M \to N$ is a $S_n$-homomorphism, then $V_{\chi}(f)\colon V_\chi(M) \to V_{\chi}(N)$ is set-theoretically the same function $f$.

 There is a scheme of finite type over $k$, which by abuse of notation we denote by $\mathcal{C}har(A)$, parametrizing the characters of $A$.

\begin{proposition} There is a functor over $\A_{k}^1$, $V\colon \mathcal{C}har(A) \times_k \del \to \rrep(\Al)$, that interpolates the previous functors $V_{\chi}$ at natural number points and $\chi \in \mathcal{C}har(A)$.
\end{proposition}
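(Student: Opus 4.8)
The plan is to imitate the proofs of the earlier interpolation results (Propositions~\ref{evinter} and \ref{inductionst}), working first in the integral setting and then pushing everything through to $\A^1_k$ by the standard ``polynomial identity verified at infinitely many integers'' argument. First I would describe, for fixed $n \in \mathbb{Z}_{\geq 0}$ and $\chi \in \cchar(A)$, the $y_a$-morphism of $V_\chi(M)$ in the basis of recollements when $M = [U] = \injr(U,I)$ with $I = \{1,\dots,n\}$. By definition $y_{a}$ acts on $[U]$ by $\sum_{i \in I} \chi(a)$ times the action of the idempotent ``image of $\one$ lies in $\im(f)$ at slot $i$''; concretely, the element of $[\one] \otimes [U] \otimes [U]$ representing $y_a$ is $\chi(a) \sum_{i \in \im(f)} y_i \otimes f \otimes f$, so in terms of recollements $D \in \rec(\one, U, U)$ the coefficients are $\chi(a) \cdot \delta$ times an integer (namely the multiplicity $\abs{C}$ of a matched point, which is $\card U$ or $\card U$-ish in the relevant cases) — in any case a polynomial in $n$ with coefficients linear in the values $\chi(a_1),\dots,\chi(a_r)$ on a fixed generating set. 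This gives a morphism $\sum_{D} Q^D(T)\,(D)$ over $\mathbb{Z}[T]$ whose coefficients depend polynomially on $T$ and on coordinates on $\mathcal{C}har(A)$.

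Next I would assemble these into an $S$-morphism $y$ for $S = \mathcal{C}har(A) \times_k \A^1_k$: on the full subcategory of $\mathcal{C}har(A) \times_k \del$ with objects $(\chi,[U])$, send morphisms $(\chi, (C))$ to the composite built from $(C)$ and the $y_a$-data above, extend additively to formal direct sums and then to the Karoubi envelope, exactly as in Proposition~\ref{inductionst}. That this lands in $\rrep(\Al)$ requires checking the two defining relations \eqref{eq:firstcondition}, \eqref{eq:secondcondition} together with $y_1 = s \otimes \id_M$ and $k$-linearity of $a \mapsto y_a$. Each of these is a polynomial identity in $T$ (and in the $\mathcal{C}har(A)$-coordinates) between morphisms in $\rep'(S_T)$ expressed in the recollement basis; since for every sufficiently large integer $n$ the two sides agree because $V_\chi$ is an honest functor into $\repord(\Al_n)$, and a polynomial vanishing at infinitely many integers is identically zero, the identities hold over $\mathbb{Z}[T]$, hence over $k[T]$ after base change $\mathbb{Z}[T] \to k[T]$. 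The $y_1 = s \otimes \id_M$ condition is immediate from $\chi(1) = 1$. Functoriality (compatibility with composition in the source) is likewise a polynomial identity checked at large integers. Base extension to $\spec k[T]$ (i.e. to $\A^1_k$) then produces the functor over $\A^1_k$, and its fiber at $t = n \in \mathbb{Z}_{\geq 0}$ recovers $V_\chi$ by construction, since Deligne's $[U] \mapsto \injr(U,I)$ was used throughout.

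The main obstacle I expect is purely bookkeeping rather than conceptual: writing the $y_a$-action on $[U]$ correctly in the recollement basis — in particular getting the integer coefficients (multiplicities $\abs{C}$ arising from summing the ``slot $i$'' over $i \in I$, which splits into the cases $i \in \im(f)$ versus $i \notin \im(f)$ just as in the proof of Proposition~\ref{evinter}) — and then verifying that the commutator relation \eqref{eq:secondcondition}, which mixes two tensor factors of $\f{h}$ and the multiplication $\rho$, really does hold after interpolation. Since $A$ is assumed commutative and finitely generated, $[a,b] = 0$ and the right-hand side of \eqref{eq:secondcondition} collapses, which simplifies matters; the relation \eqref{eq:firstcondition} then encodes $y_{a}y_{b} = y_{ab}$, again a polynomial identity. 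Thus no genuinely new idea beyond the template already established is needed, and the proof is complete once these routine verifications are carried out.
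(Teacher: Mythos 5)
Your overall strategy is the paper's: define the $y_{a_i}$-morphisms on objects of $\rep'(S_T)$, verify the relations \eqref{eq:firstcondition}, \eqref{eq:secondcondition} and functoriality by specializing $T \to n$ for large $n$, then pass to the pseudo-abelian envelope and base change. However, your concrete identification of the $y_a$-morphism — the one step you yourself flag as the crux — is wrong. For $V_\chi$, the element $1 \otimes \cdots \otimes a \otimes \cdots \otimes 1$ (with $a$ in slot $i$) acts on $M$ by the scalar $\chi(a)$ for \emph{every} $i$, with no condition relating $i$ to the basis vector being acted on. Hence the element of $[\one] \otimes [U] \otimes [U]$ representing $y_a$ is $\chi(a)\sum_{i \in I,\, f \in \inj(U,I)} y_i \otimes e_f \otimes e_f$, summed over \emph{all} $i \in I$, not only $i \in \im(f)$. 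Your restriction to $i \in \im(f)$ picks out only one of the two diagonal recollements of $\one, U, U$ and would define a different morphism: its specialization is not $V_\chi$, and it already fails the unit axiom $y_1 = s \otimes \id_M$. Relatedly, there are no multiplicities $\card U$ or polynomials in $n$ appearing in the coefficients: each of the two diagonal recollements (image of $\one$ inside, respectively outside, the common image of the two copies of $U$) simply receives the constant coefficient $\chi(a)$.

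Once corrected, the situation is much simpler than your sketch suggests: the sum of those two recollements is exactly the morphism $s \otimes \id_{[U]}$ for $s\colon \f{h} \to \unital$ the standard map, which already exists in $\del$ for all $T$ with no interpolation of coefficients required. So one defines $y_{a_i} = \chi(a_i)(s \otimes \id)$ directly on all of $\rep'(S_T)$ (this is what the paper does), and the remaining verifications go through by your specialization argument exactly as written. In short: right template, but the computational heart of your argument, as stated, produces the wrong functor and must be replaced by the sum over all $i \in I$.
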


\begin{proof} We start by working in the category $\rep'(\Al_T, k[T])$ defined in an obvious way.  
Let $a_1, \dots, a_r$ be a generating set for $A$.  
Let $s\colon \f{h} \to \unital$ be the standard map.  Then if $M \in \rep'(S_T)$, define $y_{a_i}\colon \f{h} \otimes M \to M$ by $\chi(a_i) (s \otimes \id)$.  We claim that this defines a functor $\rep'(S_T) \to \rep'(\Al_T)$.
Indeed, the fact that \eqref{eq:firstcondition} and \eqref{eq:secondcondition} hold can be checked by specialization $T \to n$ for $n$ large, since it is easy to see that the following definition of $y_{a_i}$ is just a category-friendly version of the usual one for the $V_{\chi}$ functor.  It is similarly checked that for a morphism $(C)\colon [U] \to [V]$, the diagram
\[
\xymatrix{
\f{h} \otimes [U] \ar[d]^{\id_{\f{h}} \otimes (C)} \ar[r]^{y_{a_i}} & [U] \ar[d]^{(C)}\\
\f{h} \otimes [V] \ar[r]^{y_{a_i}} & [V] }
\]
 commutes for each $i$, so functoriality follows.   Then we can specialize the functor appropriately, $T \to t$, and extend to the pseudo-abelian envelope.
\end{proof}

\subsection{Induction functors}

We momentarily drop the hypothesis that $A$ be commutative. Now fix $n_2, n_3, \dots, n_l \in \mathbb{Z}_{ \geq 0}$.  Let $N = \sum n_s$.
\begin{proposition} \label{inductionsemidirectat} Fix a cutoff $R$. For transcendental $t$, there is a functor \[\ind\colon \rep(\Al_{t-N})^{(R)} \times \prod_{s=2}^l
\repord(\Al_{n_s})
 \to \rep(\Al_t)^{(R+N)}\] interpolating the usual functors.
\end{proposition}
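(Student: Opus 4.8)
The plan is to follow the template of Propositions~\ref{inductionst} and \ref{inductionht}: describe the ordinary induction functor $\indord_{\Al_m\times\prod_s\Al_{n_s}}^{\Al_{m+N}}$ at integer rank in a ``category-friendly'' way, check that the relevant data depend polynomially on $m$ once the small ranks $n_s$ are frozen, interpolate to $\rep'(\Al_T,k[T])$, and verify the defining relations by testing at large integers and invoking constructibility.

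First I would handle the underlying $\rep(S_t)$-structure. Writing $\Al_m\otimes\bigotimes_s\Al_{n_s}=\bigl(S_m\times\prod_s S_{n_s}\bigr)\ltimes A^{\otimes(m+N)}$ as a subalgebra of $\Al_{m+N}=S_{m+N}\ltimes A^{\otimes(m+N)}$ in which the $A^{\otimes(m+N)}$-factors already coincide, ordinary induction from $\Al_m\otimes\bigotimes_s\Al_{n_s}$ to $\Al_{m+N}$, after restriction to $S_{m+N}$, is exactly $\indord_{S_m\times\prod_s S_{n_s}}^{S_{m+N}}$ applied to the $S$-restrictions of the inputs; this is the analogue of the PBW step in the proof of Proposition~\ref{inductionht}. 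Hence the underlying functor on $\rep(S_t)$-objects is obtained by iterating Proposition~\ref{inductionst}, and it interpolates $\indord_{S_m\times\prod_s S_{n_s}}^{S_{m+N}}$.

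Next I would pin down the interpolated $A$-action. As in the proof of Proposition~\ref{inductionht}, it suffices to work with inputs $([V],[J_2],\dots,[J_l])$, to express the given $y_a$-maps of the factors as elements $\sum C^{g}_{f,i}(a)\,e_f\otimes e_i\otimes e_g\in[V]\otimes[\one]\otimes[V]$ (linear in $a\in A$), and similarly for each $[J_s]$, and then to read off the coefficients of the induced $y_a$-map on $[V\sqcup J_2\sqcup\cdots\sqcup J_l]\otimes[\one]\otimes[V\sqcup J_2\sqcup\cdots\sqcup J_l]$. Normalizing by $S_{m+N}$-invariance, the essential simplification relative to the dAHA case is that the generator $y_{k,a}$ of $\Al_{m+N}$ acts through the single tensor slot $k$ and commutes with the $S_{n_{s'}}$- and $A$-factors of the other blocks, so there are \emph{no} half-transposition correction terms (there is no Jucys--Murphy element in the wreath-product relations): at a recollement compatible with the block decomposition the induced coefficient is simply the coefficient of whichever block contains the marked point, and it is $0$ otherwise. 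In particular the induced $y$-map is linear in $a$ and its coefficients are polynomial in $m$ with the $n_s$ fixed, so it interpolates to a morphism over $\A_k^1$ for all $T$, and $y_1=s\otimes\id_M$ is immediate.

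Finally I would check that the interpolated $y$ obeys \eqref{eq:firstcondition} and \eqref{eq:secondcondition} for a fixed finite generating set of $A$: these cut out closed subschemes $V_1,(V_s)_s$ and $V_{\mathrm{tgt}}$ of coproducts of affine spaces over $k[T]$, and induction is a $k[T]$-morphism $f$ with $f(V_1\times\prod_s V_s)\subset V_{\mathrm{tgt}}$ after base change to large integers; the usual Chevalley-type argument (as in Lemma~\ref{bijisconst}) forces this containment for all but finitely many algebraic $t$, hence for all transcendental $t$. The functor then extends by additivity to the pseudo-abelian envelope, lands in $\rep(\Al_t)^{(R+N)}$ by the size bound built into Proposition~\ref{inductionst}, and one specializes $T\to t$; compatibility with the integer specializations follows from Lemma~\ref{injlemma} together with the identification in the integral case. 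The main obstacle is the one already flagged after Proposition~\ref{inductionst}: since $\indord_{S_m\times\prod_s S_{n_s}}^{S_{m+N}}$ multiplies dimensions by the multinomial coefficient $\binom{m+N}{m,\,n_2,\dots,n_l}$, which is polynomial in the rank only when the small ranks are frozen, exactly one rank can be left non-integral; and, as the statement already concedes, the constructibility argument cannot in general exclude the finite set of bad algebraic $t$ at which \eqref{eq:firstcondition}--\eqref{eq:secondcondition} could fail to interpolate, which is why one passes to transcendental $t$.
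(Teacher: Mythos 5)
Your proposal is correct and follows essentially the same route as the paper: reduce to objects of the form $[U],[J_s]$, compute the induced $y$-coefficients blockwise (with no Jucys--Murphy-type correction terms, exactly as the paper's explicit $E^q_{p,m}$ formulas show), and verify \eqref{eq:firstcondition}--\eqref{eq:secondcondition} by the same specialization-plus-constructibility argument borrowed from Proposition~\ref{inductionht}. The only cosmetic difference is that the paper treats $l=2$ and iterates, whereas you write the general block decomposition directly.
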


\begin{proof} We handle the case $l=2$, because the general case can be obtained by iteration.  Fix $a\in A$. Let $t \in \mathbb{Z}_{\geq 0}$.  Assume for simplicity that the two objects restrict as $S_t$-objects to $[U] \in \repord(S_{t-N}), [J] \in \repord(S_N)$, where $\abs{J} = N$.  Then there are maps
$y\colon [\one] \otimes [U] \to [U], y'\colon [\one] \otimes [J] \to [J]$ that describe the action of $a$; we have dropped the subscript for convenience.  Let 
$y$ be described by $\sum_{f,i,g} C^{g}_{f,i} e_f \otimes e_i \otimes e_g \in [U] \otimes [\one] \otimes [U]$. 
Similarly, the map $y'\colon [J] \otimes [\one] \to [J]$ is described by 
\(  \sum_{h, j, d} D_{h,j}^{l} e_h \otimes e_j \otimes e_d.\) 
Finally, the map $[U + J] \otimes [\one] \to [U+J]$ can be
described by some
\(  \sum_{p, m, q} E^{q}_{p,m} e_p \otimes e_m \otimes e_q.\)
Given $C_{f,i}^{g}, D_{h,j}^{d}$, we need to compute $E^{q}_{p,m}$.

Given $m \in I+J$ corresponding to the image of $\one$, $p \in \inj(U+J, I+J), q \in \inj(U+J, I+J)$, we compute $E^{q}_{p,m}$.  By $S_{I+J}$-invariance, assume $p$ comes from pasting $p'\colon U \to I$, $p'' = \id_J\colon J \to J$.  Suppose first $k \notin \im(p'') = J \subset I+J$.  Then  
 $E^{q}_{p,m}  = C^{q|_U}_{p',m}$ if $q|_J = \id_{J}$.   Otherwise $E^q_{p,m}=0$.

  We do a similar procedure when $m \in \im(p'')$.  Then,
$E^q_{p,m} = D^{q|_J}_{\id_J, m}$ if $q(J) \subset J$ and $p|_U = q|_U$, and $E^q_{p,m}=0$ otherwise.

Using this, we can express the functor in terms of recollements (recall that $E^q_{p,m}$ becomes the coefficient for the recollement defined by $p,q,m$), and argue as in the proof of induction for the dAHA (Proposition~\ref{inductionht}).
\end{proof}

\subsection{Some combinatorics} We start by describing the construction of simple objects in the standard case.  Fix $n \in \mathbb{Z}_{\geq 0}$.  
\begin{theorem}[Classical] \label{classicalsimplesemidirectprod} Suppose $A$ is commutative and finitely generated. Every simple object in $\repord(\Al_n)$ can be obtained from a partition $n = n_1 + \dots + n_l$, characters $\chi_1, \dots, \chi_l$, objects $M_j \in \repord(S_{n_j})$ for $1 \leq j \leq l$, via \\
\[  \ind_{\Al_{n_1}  \otimes \dots \otimes  \Al_{n_l}}^{\Al_n}( V_{\chi_1}(M_1), \dots,  V_{\chi_l}(M_l) ), \]
where the $ M_j \in \rep(S_{n_j})$ are simple, and $  \chi_j \in \cchar(A)$. 
\end{theorem}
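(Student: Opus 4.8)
The statement is classical: it is the Clifford theory of the semidirect-product algebra $\Al_n = S_n \ltimes A^{\otimes n}$, complicated only by the fact that $A$ may be infinite-dimensional and non-reduced. The plan is to fix a simple $\Al_n$-module $M$ (necessarily finite-dimensional) and first show that $A^{\otimes n}$ acts on it through a product of copies of $k$. For this, let $N \subseteq \End_k(M)$ be the image of $A^{\otimes n}$; it is a finite-dimensional commutative, hence Artinian, algebra, so its nilradical $\f{n}$ is a nilpotent ideal, and being characteristic in $N$ it is stable under the conjugation action of $S_n$ on $\End_k(M)$. Hence $\f{n}M$ is an $\Al_n$-submodule, Nakayama's lemma forces $\f{n}M = 0$, and faithfulness of $N$ on $M$ gives $\f{n} = 0$. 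Thus $N$ is reduced, so $N \simeq k^m$ as $k$ is algebraically closed, and $M$ decomposes into honest eigenspaces $M = \bigoplus_{\theta \in \Omega} M_\theta$ over a finite set $\Omega$ of characters of $A^{\otimes n}$, i.e. of $n$-tuples $\theta = (\theta_1,\dots,\theta_n)$ with $\theta_i \in \cchar(A)$.

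Next I would run the Mackey argument. Since $\sigma \in S_n$ carries $M_\theta$ isomorphically onto $M_{\sigma\theta}$, the set $\Omega$ is $S_n$-stable, and because the sum of the $M_\theta$ over any $S_n$-orbit is an $\Al_n$-submodule, simplicity of $M$ forces $\Omega$ to be a single orbit. Choose $\theta^0 \in \Omega$; after permuting coordinates it reads $(\chi_1,\dots,\chi_1,\dots,\chi_l,\dots,\chi_l)$ with the $\chi_j \in \cchar(A)$ distinct, $\chi_j$ occurring $n_j$ times and $n = \sum_j n_j$, so its $S_n$-stabilizer is the Young subgroup $S_{n_1}\times\cdots\times S_{n_l}$. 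Set $P = (S_{n_1}\times\cdots\times S_{n_l})\ltimes A^{\otimes n} \subseteq \Al_n$. Then $M_{\theta^0}$ is a $P$-submodule of $M$, the canonical map $\Al_n \otimes_P M_{\theta^0} \to M$ is surjective (its image is a nonzero submodule of the simple $M$), and it is an isomorphism by a dimension count: $\Al_n$ is free of rank $\binom{n}{n_1,\dots,n_l} = \abs{\Omega}$ over $P$, and the spaces $M_\theta$ all have the same dimension. The same observation — that distinct cosets land in distinct eigenspaces — shows $M_{\theta^0}$ is simple over $P$.

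Finally I would identify $M_{\theta^0}$. Regrouping the $n$ tensor factors of $A^{\otimes n}$ into consecutive blocks of sizes $n_1,\dots,n_l$ identifies $P$ with $\Al_{n_1}\otimes\cdots\otimes\Al_{n_l}$. On $M_{\theta^0}$ the subalgebra $A^{\otimes n}$ acts via the single character $\theta^0$, so $A^{\otimes n_j}$ acts on the $j$-th block by the scalar character $\chi_j^{\otimes n_j}$; as this character is $S_{n_j}$-invariant, the $\Al_{n_j}$-action factors through $kS_{n_j}$, and $M_{\theta^0}$ is in fact a simple module over $k(S_{n_1}\times\cdots\times S_{n_l})$, hence an outer tensor product $M_1\boxtimes\cdots\boxtimes M_l$ of simple $S_{n_j}$-modules. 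Unwinding the definition of $V_\chi$, this exactly says $M_{\theta^0} \simeq V_{\chi_1}(M_1)\boxtimes\cdots\boxtimes V_{\chi_l}(M_l)$ as a $P$-module, and combining with the previous step yields $M \simeq \ind_{\Al_{n_1}\otimes\cdots\otimes\Al_{n_l}}^{\Al_n}(V_{\chi_1}(M_1),\dots,V_{\chi_l}(M_l))$, which is the asserted form.

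The only step that is more than bookkeeping — the main obstacle — is the reducedness of $N$ in the first paragraph: without it one would get only a \emph{generalized} eigenspace decomposition, and the hypothesis that $A$ is merely commutative and finitely generated (not reduced, not finite-dimensional) is exactly what makes this point non-automatic. Once $A^{\otimes n}$ is known to act diagonalizably, the remainder is the textbook Clifford/Mackey machinery for a semidirect product by an abelian part.
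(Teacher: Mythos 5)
Your proof is correct and complete. Note that the paper does not actually prove this theorem: it labels it ``Classical'' and refers to \cite{EtVaetal} only for the special case where $A$ is the group algebra of a finite group, so there is nothing in the text to compare step by step. Your argument is the standard Mackey/Clifford machine for a semidirect product with commutative ``normal'' part, and you have correctly isolated the one point that is not automatic when $A$ is merely commutative, finitely generated, and possibly non-reduced: that the image $N$ of $A^{\otimes n}$ in $\End_k(M)$ is reduced, hence split semisimple, so that $M$ decomposes into genuine (not generalized) weight spaces for $n$-tuples of characters of $A$. The nilradical argument ($\f{n}$ is $S_n$-stable, so $\f{n}M$ is a submodule, nilpotence plus simplicity forces $\f{n}M=0$, faithfulness forces $\f{n}=0$) is exactly right, and the remaining steps --- transitivity of $S_n$ on the occurring characters, the dimension count showing $\Al_n\otimes_P M_{\theta^0}\to M$ is an isomorphism, simplicity of $M_{\theta^0}$ over $P$ via the eigenspace grading, and the identification of $M_{\theta^0}$ with $V_{\chi_1}(M_1)\boxtimes\cdots\boxtimes V_{\chi_l}(M_l)$ using the $S_{n_j}$-invariance of $\chi_j^{\otimes n_j}$ --- are all sound. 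This gives a self-contained proof in the generality the paper actually needs, rather than only the finite-group-algebra case covered by the cited reference.
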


For a proof when $A$ is  the group algebra of a finite group, cf. \cite{EtVaetal}.  We shall derive a similar classification in transcendental rank.
 In order to apply the machinery developed, we will need corresponding results on the filtration of these categories.

\begin{corollary} \label{serrelocalfieldsisawesome} Suppose $n>2N$.  Then every simple object in $\repord(\Al_n)^{(N)}$ is of the form  $\ind( V_{\chi_1}(M_1), \dots,  V_{\chi_l}(M_l) )$ for $n_2 + \dots + n_l \leq N$ and $M_1 \in \repord(S_{n_1})^{(N)}$ (all $M_j$ simple).
\end{corollary}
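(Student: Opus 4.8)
## Proof plan for Corollary~\ref{serrelocalfieldsisawesome}

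The plan is to deduce this from Theorem~\ref{classicalsimplesemidirectprod} by controlling the $S_n$-restriction of the induced objects and invoking the combinatorics of the symmetric-group filtration already recalled in \S 2.1. First I would fix a simple object $L$ in $\repord(\Al_n)^{(N)}$; by Theorem~\ref{classicalsimplesemidirectprod} it has the form $\ind_{\Al_{n_1} \otimes \dots \otimes \Al_{n_l}}^{\Al_n}(V_{\chi_1}(M_1), \dots, V_{\chi_l}(M_l))$ with each $M_j \in \repord(S_{n_j})$ simple. The key point is to understand $\res^{\Al_n}_{S_n} L$. Since $V_{\chi_j}(M_j)$ restricts to $M_j$ as an $S_{n_j}$-module, the analogue of the PBW/Mackey computation used in Proposition~\ref{inductionht} (here trivial, since $A^{\otimes n}$ acts by characters and the relevant subalgebra is $k[S_{n_1}] \otimes \dots \otimes k[S_{n_l}]$) gives $\res^{\Al_n}_{S_n} L \simeq \ind_{S_{n_1} \times \dots \times S_{n_l}}^{S_n}(M_1, \dots, M_l)$.

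Next I would translate the hypothesis $L \in \repord(\Al_n)^{(N)}$ — i.e. every $S_n$-constituent of $L$ is a Specht module with at most $N$ boxes below the first row — into a constraint on the $n_j$. By the Littlewood--Richardson rule, an irreducible constituent of $\ind_{S_{n_1} \times \dots \times S_{n_l}}^{S_n}(M_1, \dots, M_l)$ corresponds to a partition obtained by adding the Young diagrams of the $M_j$; the number of boxes strictly below the first row of such a partition is at least $\sum_{j} (\text{boxes below the first row of } M_j) + \big((\text{number of } j \text{ with } n_j \geq 1 \text{ chosen to contribute a second row}) \big)$. More concretely, with $n > 2N$ at most one of the $n_j$ can exceed $N$; relabel so that $n_1$ is the large one, and then every constituent of $\ind_{S_{n_1} \times \dots}^{S_n}(M_1,\dots,M_l)$ has at least $n_2 + \dots + n_l$ boxes not in the first row (the rows/columns of $M_2, \dots, M_l$ must be grafted below the first row of the huge near-rectangular $M_1$, since $n_1 > n - N > N \geq n_j$ for $j \geq 2$). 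Hence $n_2 + \dots + n_l \leq N$. With that established, the same inequality forces $M_1 \in \repord(S_{n_1})^{(N)}$, because the diagram of $M_1$ together with at most $N$ further boxes must still have $\leq N$ boxes below its first row, so $M_1$ already does.

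The step I expect to require the most care is the precise Littlewood--Richardson bookkeeping in the second paragraph: one must check that when $n > 2N$ a "large'' factor $n_1$ is forced, that all boxes of the small factors genuinely land below the first row of the constituent partition (no cancellation or reshuffling into the first row is possible given the size gap), and that one cannot evade the bound by distributing a small amount of mass cleverly among several medium-sized $n_j$. This is exactly the combinatorics underlying the remark after Theorem~1.1 of \cite{De} identifying $\rep(S_n)^{(N)}$ with Specht modules having $\leq N$ boxes below the first row, so I would cite \cite{De}, \S 6 for the relevant count rather than reprove it. Everything else — existence and exactness of the parabolic induction, compatibility of $V_\chi$ with restriction — is formal and already packaged in Theorem~\ref{classicalsimplesemidirectprod} and the constructions of this section.
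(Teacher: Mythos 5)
Your overall strategy matches the paper's: reduce to Theorem~\ref{classicalsimplesemidirectprod}, observe that the $S_n$-restriction of the induced object is $\indord_{S_{n_1}\times\dots\times S_{n_l}}^{S_n}(M_1\otimes\dots\otimes M_l)$, and then extract the bound $n_2+\dots+n_l\le N$ from symmetric-group combinatorics (this is exactly the paper's Lemma~\ref{keyboundkey}). But your key combinatorial claim is false as stated. You assert that \emph{every} constituent of the induced module has at least $n_2+\dots+n_l$ boxes below the first row, justified by saying the boxes of $M_2,\dots,M_l$ ``must be grafted below the first row'' of $M_1$. They need not be: by Pieri/Littlewood--Richardson a horizontal strip may be adjoined to the first row. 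Concretely, if $M_1$ and $M_2$ are the trivial representations of $S_{n_1}$ and $S_{n_2}$, the induced module contains the trivial representation $V_{(n)}$, which has zero boxes below the first row no matter how large $n_2$ is. Likewise, the assertion that $n>2N$ alone forces at most one $n_j$ to exceed $N$ is false (take $n_1=n_2=n/2>N$); that fact is part of the conclusion, not a consequence of the hypothesis on $n$.

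The correct statement, and the one the paper uses, is that \emph{some} constituent --- namely the lexicographically minimal one, which by the branching rule is $V_\lambda$ for $\lambda$ the partition obtained by splicing together the Young diagrams of the $M_j$ --- has first row of length at most $\max_j n_j$ and hence at least $n-\max_j n_j$ boxes below the first row. Since membership in $\repord(S_n)^{(N)}$ is upward-closed in the lexicographic order, the hypothesis that the restriction lies in $\repord(S_n)^{(N)}$ is exactly the statement that this spliced constituent has at most $N$ boxes below the first row; hence $\max_j n_j\ge n-N>N$, there is a unique large block (relabel it $n_1$), and $n_2+\dots+n_l\le N$. Your argument for the second conclusion ($M_1\in\repord(S_{n_1})^{(N)}$, via $\lambda_1\subset\nu$ for every constituent $\nu$) is fine once that relabeling has been justified. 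So the gap is localized but real: the quantifier in the combinatorial step must be ``some (the lex-minimal) constituent,'' not ``every constituent,'' and the identification of the large block has to come out of that same argument rather than from $n>2N$ alone.
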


The corollary is a consequence of Theorem~\ref{classicalsimplesemidirectprod} and the next lemma.
 
\begin{lemma} \label{keyboundkey} Let $n>2N$.  Suppose $\mu\colon n_1 + n_2 + \dots + n_l =  n $ is an ordered partition of $n$, $V_j \in \repord(S_j)$ is nonzero, and 
\( \ind_{S_{n_1}   \times \dots \times S_{n_l}}^{S_n}(\bigotimes V_j) \in \repord(S_n)^{(N)} .\)
Then $n_2 + \dots + n_l \leq N$ and $V_1 \in \repord(S_{n_1})^{(N)}$.
\end{lemma}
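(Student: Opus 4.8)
The plan is to push everything through the Littlewood--Richardson rule while tracking a single invariant of a Specht module $S^\lambda$ of $S_n$, namely its first part $\lambda_1$ (equivalently, the number $n-\lambda_1$ of boxes below the first row). By definition, $\repord(S_n)^{(N)}$ is the class of $S_n$-modules all of whose composition factors $S^\lambda$ satisfy $\lambda_1\geq n-N$, and this class is closed under subquotients. The first step is to reduce to the case that each $V_j=S^{\mu^{(j)}}$ is irreducible: since the characteristic is $0$, induction is exact, and every irreducible constituent of $\bigotimes_j V_j$ over $\prod_j S_{n_j}$ has the form $\bigotimes_j S^{\mu^{(j)}}$ with $S^{\mu^{(j)}}$ a constituent of $V_j$; hence $\ind_{\prod_j S_{n_j}}^{S_n}\!\bigl(\bigotimes_j S^{\mu^{(j)}}\bigr)$ is a subquotient of $\ind_{\prod_j S_{n_j}}^{S_n}(\bigotimes_j V_j)$ and so still lies in $\repord(S_n)^{(N)}$. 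It therefore suffices to fix one constituent $S^{\mu^{(j)}}$ of each $V_j$ and argue with these.

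The crux is to exhibit, among the composition factors of $\ind_{\prod_j S_{n_j}}^{S_n}(\bigotimes_j S^{\mu^{(j)}})$, one whose first part is as small as possible. I claim the minimum of $\lambda_1$ over these factors is $\max_j \mu^{(j)}_1$, attained by the partition $\tau:=\mu^{(1)}\cup\cdots\cup\mu^{(l)}$ whose multiset of parts is the union of the parts of all the $\mu^{(j)}$ (so $\tau_1=\max_j\mu^{(j)}_1$). Indeed, the multiplicity of $S^\tau$ is the iterated Littlewood--Richardson number $\langle s_{\mu^{(1)}}\cdots s_{\mu^{(l)}},\, s_\tau\rangle$; applying the ring involution $\omega$ on symmetric functions ($s_\nu\mapsto s_{\nu'}$) and using $\tau'=(\mu^{(1)})'+\cdots+(\mu^{(l)})'$ identifies this with $\langle s_{(\mu^{(1)})'}\cdots s_{(\mu^{(l)})'},\, s_{\tau'}\rangle$, which equals $1$ because the componentwise sum $(\mu^{(1)})'+\cdots+(\mu^{(l)})'$ is the unique dominance-maximal term of $s_{(\mu^{(1)})'}\cdots s_{(\mu^{(l)})'}$ and occurs with coefficient $1$. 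Thus $S^\tau$ is a composition factor of our induced module, and since the latter lies in $\repord(S_n)^{(N)}$ we obtain $\max_j\mu^{(j)}_1=\tau_1\geq n-N$.

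The remainder is arithmetic with the hypothesis $n>2N$. Choose $j_0$ with $\mu^{(j_0)}_1\geq n-N$; then $n_{j_0}\geq\mu^{(j_0)}_1\geq n-N>N$. No two indices can satisfy this, since then $n$ would be at least $2(n-N)>n$; so $j_0$ is forced, and after relabelling it as $1$ we get $n_2+\cdots+n_l=n-n_1\leq N$, the first assertion. For the second: for every $j\geq 2$ we now have $\mu^{(j)}_1\leq n_j\leq n_2+\cdots+n_l\leq N<n-N\leq\mu^{(1)}_1$, so in fact $\tau_1=\mu^{(1)}_1$, and the number of boxes of $\mu^{(1)}$ below its first row is $n_1-\mu^{(1)}_1=n_1-\tau_1\leq n_1-(n-N)=N-(n_2+\cdots+n_l)\leq N$. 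Since $S^{\mu^{(1)}}$ was an arbitrary constituent of $V_1$, this gives $V_1\in\repord(S_{n_1})^{(N)}$.

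The step I expect to be the main obstacle is the crux: recognizing that $\lambda_1$ is the invariant to track, and that its minimum over the support of an iterated Schur product $s_{\mu^{(1)}}\cdots s_{\mu^{(l)}}$ is $\max_j\mu^{(j)}_1$, realized by the ``merge the parts'' partition $\tau$ (the conjugation trick reducing it to the standard fact that the componentwise sum is the top term). One caveat: the conclusion singles out $n_1$, and the argument shows it is precisely the unique part exceeding $N$ that must play that role, so the statement should be read with the factors ordered accordingly; reindexing is harmless since induction is insensitive to the order of the factors.
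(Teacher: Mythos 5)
Your proof is correct and follows essentially the same route as the paper: reduce to irreducible $V_j$, observe that the induced module contains the constituent $S^\tau$ for $\tau$ the union of the parts of the $\mu^{(j)}$ (the paper's ``spliced'' partition, which it identifies as the lexicographically lowest constituent via an auxiliary branching-rule lemma), and then run the same arithmetic with $n>2N$. Your conjugation/dominance argument for the multiplicity of $S^\tau$ just supplies the detail the paper delegates to that unproved lemma, and your caveat that the statement must be read with $n_1$ reindexed to be the unique part exceeding $N$ is a point the paper's formulation also silently assumes.
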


\begin{proof}  Without loss of generality, we may assume each $V_j$ simple.
Recall what it means for an irreducible object in $\repord(S_n)$, parametrized by a Young diagram, 
to belong to $\repord(S_n)^{(N)}$.  It means that below the first row, there are at most $N$ squares.  In particular, if $\lambda, \mu$ are Young diagrams and $\lambda > \mu$ in the lexicographical order, then $\lambda \in \repord(S_n)^{(N)}$ if $\mu \in \repord(S_n)^{(N)}$.  

Now, by the following lemma, the lowest simple object in the $S_n$-representation 
$\ind_{S_{n_1} \times S_{n_2} \times \dots \times S_{n_l}}^{S_n}( V_1 \otimes \dots \otimes V_l)$, with respect to the lexicographical order, has a top row in the associated Young diagram of length at most $n_1$.  In particular, the rows below it  have total length at least $n_2 + \dots + n_l$, so this is at most $N$.  Using similar reasoning, the number of boxes in the Young diagram of $V_1$ below the first row can be at most $N$. 
\end{proof}

\begin{lemma} Let $n = n_1 + \dots + n_l$, let $V_{\lambda_i}  \in \repord(S_{n_i})$ be the irreducible object associated to the partition $\lambda_i$ of $n_i$, and let $V = \ind_{S_{n_1} \times \dots \times S_{n_l}}^{S_n} ( \bigotimes_i V^i_{\lambda_i})$.
Then for some $C_\mu$,
\( V = V_{\lambda} \oplus \bigoplus_{\mu\colon \mu > \lambda} C_{\mu} V_{\mu } \)
where $\lambda$ is the partition of $n$ obtained by splicing together all the $\lambda_i$ (and possibly reordering) and $V_{\lambda} \in \repord(S_n)$ the associated simple representation.
\end{lemma}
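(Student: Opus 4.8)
The plan is to exhibit $V$ as a direct summand of an $S_n$-permutation module and then read off its composition factors from Young's rule, using Frobenius reciprocity to pin down the multiplicity of the extreme factor.

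First I would observe that, in characteristic zero, the irreducible $V_{\lambda_i}$ occurs (with multiplicity one, hence as a direct summand) in the permutation module $M^{\lambda_i}=\ind_{S_{\lambda_i}}^{S_{n_i}}\unital$, where $S_{\lambda_i}\leq S_{n_i}$ is the Young subgroup attached to $\lambda_i$. Therefore $\bigotimes_i V_{\lambda_i}$ is a summand of $\bigotimes_i M^{\lambda_i}\cong \ind_{\prod_i S_{\lambda_i}}^{\prod_i S_{n_i}}\unital$; inducing up to $S_n$ and using transitivity of induction, $V$ is a direct summand of $\ind_{\prod_i S_{\lambda_i}}^{S_n}\unital\cong M^{\lambda}$, because $\prod_i S_{\lambda_i}$, as a subgroup of $S_{n_1}\times\cdots\times S_{n_l}\leq S_n$, is conjugate to the Young subgroup of $S_n$ attached to the spliced partition $\lambda$.

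Next I would invoke Young's rule: $M^{\lambda}\cong V_\lambda\oplus\bigoplus_{\mu}K_{\mu\lambda}V_\mu$, where the Kostka number $K_{\mu\lambda}$ vanishes unless $\mu\trianglerighteq\lambda$ in the dominance order, and $K_{\lambda\lambda}=1$. Since $\mu\trianglerighteq\lambda$ implies $\mu\geq\lambda$ lexicographically, and strict dominance implies strict lexicographic inequality, it follows at once that every composition factor of the summand $V\subseteq M^\lambda$ is indexed by some $\mu\geq\lambda$, that every factor other than a single copy of $V_\lambda$ is indexed by some $\mu>\lambda$, and that $V_\lambda$ occurs in $V$ with multiplicity at most one. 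To see that it occurs at least once, I would apply Frobenius reciprocity, $\hom_{S_n}(V_\lambda,V)\cong\hom_{\prod_i S_{n_i}}\big(\res_{\prod_i S_{n_i}}V_\lambda,\ \bigotimes_i V_{\lambda_i}\big)$, reducing the claim to the non-vanishing of the iterated Littlewood--Richardson coefficient $c^{\lambda}_{\lambda_1,\dots,\lambda_l}$. This follows by induction on $l$ from the classical fact that $c^{\alpha\cup\beta}_{\alpha,\beta}=1$ for partitions $\alpha,\beta$: writing $\lambda'=\lambda_1\cup\cdots\cup\lambda_{l-1}$, so that $\lambda=\lambda'\cup\lambda_l$, one has $c^{\lambda}_{\lambda_1,\dots,\lambda_l}\geq c^{\lambda'}_{\lambda_1,\dots,\lambda_{l-1}}\,c^{\lambda}_{\lambda',\lambda_l}=1$. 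Combining the three steps yields the asserted decomposition, the sum running over $\mu>\lambda$ with $C_\mu=0$ whenever $\mu$ merely exceeds $\lambda$ lexicographically without dominating it.

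The only genuinely delicate point is the order-theoretic bookkeeping — that the dominance order is contained in the lexicographic order, so that the spliced partition $\lambda$ is simultaneously the dominance-least and the lexicographically-least possible index of a constituent — together with nailing the multiplicity of $V_\lambda$ to exactly one; the remaining ingredients (Young's rule, transitivity of induction, Frobenius reciprocity, and $c^{\alpha\cup\beta}_{\alpha,\beta}=1$) are classical and combine without difficulty.
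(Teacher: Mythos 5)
Your proof is correct, and it takes a different route from the one in the paper. The paper disposes of this lemma essentially by citation (``this is the branching rule''), and its intended argument is a descending induction on the lexicographic order: starting from the case where every $\lambda_i$ is the one-row partition (where the claim is exactly the vanishing/normalization of Kostka numbers for the permutation module $\ind_{S_\lambda}^{S_n}\unital$), one writes $W_i = \ind_{S_{\lambda_i}}^{S_{n_i}}(\unital) = V_{\lambda_i}\oplus(\text{lex-higher terms})$, induces the $W_i$ up to $S_n$ to recover $\ind_{S_\lambda}^{S_n}(\unital)$, and peels off the lex-higher contributions using the inductive hypothesis. Your argument instead realizes $V$ directly as a summand of $M^\lambda=\ind_{S_\lambda}^{S_n}\unital$ and splits the problem into an upper bound (Young's rule plus the inclusion of dominance order in lexicographic order, giving that all constituents are $V_\mu$ with $\mu\geq\lambda$ and that $V_\lambda$ occurs at most once) and a lower bound (Frobenius reciprocity plus non-vanishing of the iterated Littlewood--Richardson coefficient $c^{\lambda}_{\lambda_1,\dots,\lambda_l}$, via $c^{\alpha\cup\beta}_{\alpha,\beta}=1$). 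The trade-off is that the paper's induction needs only Young's rule, at the price of some bookkeeping about how splicing interacts with the lexicographic order, whereas yours imports one additional classical fact about LR coefficients but is more direct and makes the multiplicity-one statement transparent. One cosmetic point: in your inductive chain the final ``$=1$'' should read ``$\geq 1$'' (the inductive hypothesis only gives non-vanishing), but this is harmless since the exact multiplicity is already forced by the Kostka bound $K_{\lambda\lambda}=1$.
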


This is essentially the branching rule for the symmetric group.

\subsection{Generic classification of simple objects} 
Using the machinery developed, we can now extend this result to complex rank generically:

\begin{theorem} For $t$ transcendental, every simple object in $\rep(\Al_t)$ is of the form 
\( \ind( V_{\chi_1}(M_1), V_{\chi_2}(M_2), \dots,  V_{\chi_l}(M_l) ) \)
where
$M_1 \in \rep(S_{t-N})$ is simple, $ M_j \in \repord(S_{n_j})$ is simple for $j >1$, $n_2 \geq n_3 \geq \dots$,   $N = \sum_2^l n_j$, and each $\chi_j \in \cchar(A)$.
Moreover, different choices yield non-isomorphic objects.
\end{theorem}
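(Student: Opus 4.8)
The plan is to mimic the strategy used for the dAHA classification (Theorem~\ref{bigthmyeah}), transferring the classical statement Theorem~\ref{classicalsimplesemidirectprod} and its truncated refinement Corollary~\ref{serrelocalfieldsisawesome} to complex rank by a combination of the constructibility machinery of \S\ref{secpar} and a Galois-descent argument over $\mathbb{Q}$. First I would fix a cutoff $N$ and work inside $\rrep(\Al)^{(R,S)}$ for $R,S$ large (depending on $N$), which is a finite-type $\mathbb{A}^1_k$-category. Using Proposition~\ref{simplicityisgeneric} there is a constructible $Sim\subset\ob$ whose fibers over $s$ pick out the simple objects of $\rep(\Al_s)$; using Proposition~\ref{inductionsemidirectat}, the $V_\chi$ functors, and Proposition~\ref{injisgeneric}, there is a constructible $Ind\subset\ob$ parametrizing (images of) the objects $\ind(V_{\chi_1}(M_1),\dots,V_{\chi_l}(M_l))$ with $n_2+\dots+n_l\le N$, $M_1\in\rep(S_{t-N})^{(N)}$ simple, $M_j\in\repord(S_{n_j})$ simple. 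By Corollary~\ref{serrelocalfieldsisawesome}, for $s\in\mathbb{Z}_{\ge 0}$ sufficiently large one has $Sim_{\bar s}\subset Ind_{\bar s}$; since both are constructible subsets of $\ob$, the containment $Sim\subset Ind$ holds on a dense open, hence at some transcendental $t$, hence — by the action of $\mathrm{Gal}(\mathbb{C}/\mathbb{Q})$, all the schemes being defined over $\mathbb{Q}$ — at every transcendental $t$. Taking the union over $N$ (using $\rep(\Al_t)=\bigcup_N\rep(\Al_t)^{(N)}$) gives the first assertion.

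For the uniqueness ("different choices yield non-isomorphic objects") I would argue again by specialization and Galois descent, but here one must be careful because isomorphism classes, not just the objects themselves, are at issue. The clean way: build a constructible subset $Iso\subset\ob\times_{\mathbb{A}^1}\ob$ of pairs of objects that are isomorphic in the fiber (this follows from Proposition~\ref{injisgeneric} and Proposition~\ref{limisgeneric}, or simply by parametrizing invertible morphisms — a morphism $f$ is invertible iff it is both monic and epic and a two-sided inverse exists, all constructible conditions). Parametrize the data $(n_1,\dots,n_l;\chi_1,\dots,\chi_l;M_1,\dots,M_l)$ — the discrete parts being finite in number for a fixed cutoff, the $\chi_j$ ranging over $\mathcal{C}har(A)^{\times l}$, and $M_1$ over the finitely many simple objects of $\rep(S_{t-N})^{(N)}$ — by a finite-type scheme mapping to $\ob$, and consider the constructible locus where two such inductions become isomorphic. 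This locus must be empty for $s\in\mathbb{Z}_{\ge 0}$ large (that is precisely the classical statement that the $\ind(V_{\chi_i}(M_i))$ are pairwise non-isomorphic, since the $\chi_i$ and the partition can be recovered from the module — e.g. from the action of the center of $A^{\otimes n}$ and the $S_n$-socle), hence it is empty on a dense open, hence empty at transcendental $t$ by the Galois argument. One small subtlety is that the number of simple $S_t$-objects of bounded size is itself constant generically (by Deligne's theorem and Proposition~\ref{simplicityisgeneric}), matching the integer case, so the parametrizing scheme really does have the right fibers.

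The main obstacle I expect is not any single step but keeping the bookkeeping honest: one needs the classical truncated statement (Corollary~\ref{serrelocalfieldsisawesome}, resting on Lemma~\ref{keyboundkey}) to guarantee that the cutoff $N$ controls \emph{all} the data simultaneously — namely that $n_2+\dots+n_l\le N$ and $M_1$ itself lies in the truncated category — so that the entire classification at level $N$ takes place inside a single finite-type category $\rrep(\Al)^{(R,S)}$ to which the constructibility propositions apply. Without that uniform bound the parametrizing scheme would fail to be of finite type and the Chevalley/constructibility arguments would collapse. A secondary point requiring care is that $A$ is fixed and finitely generated but not assumed commutative in Proposition~\ref{inductionsemidirectat} (commutativity is only used to make sense of $\cchar(A)$ and the $V_\chi$), so the statement as phrased — with characters $\chi_j$ — implicitly requires $A$ commutative; I would state that hypothesis explicitly, matching Theorem~\ref{classicalsimplesemidirectprod}, and then everything goes through as above.
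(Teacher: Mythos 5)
Your proposal is correct and follows essentially the same route as the paper: the paper reduces the theorem to Proposition~\ref{cutoffprop}, whose proof is exactly your constructibility argument ($Sim_{\bar t}\subset T_{\bar t}$ generically because it holds at large integers by Corollary~\ref{serrelocalfieldsisawesome}), and disposes of the non-isomorphism claim by the same specialization-to-integers reasoning you spell out with the $Iso$ locus. Your remarks on the commutativity hypothesis and on the cutoff keeping everything in a finite-type category are consistent with how the paper organizes the argument.
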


In other words, the simple objects are indexed by families of Young diagrams of arbitrary size, together with the assignment of a character to each diagram.  To the top one, one can append a ``very long line'' as usual.

The result follows from the next proposition, 
since  the difference between objects associated to different Young diagrams or character arrangements is implied by  the classification at the integers.

\begin{proposition} \label{cutoffprop}
If $W$ is fixed, then for all but finitely many $t$ the simple objects in 
$\rep(\Al_t)^{(W,W)}$
are of the form 
\( \ind( V_{\chi}(M_1), V_{\chi_2}(M_2), \dots,  V_{\chi_k}(M_k) ) \)
with $M_1 \in \rep(S_{t-N})^{(N)}, M_j \in \repord(S_{n_j})$ simple, and $N \leq W$.
\end{proposition}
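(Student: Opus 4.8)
The plan is to run the same constructibility-plus-Galois machine used in the proof of Theorem~\ref{bigthmyeah}(1), now with the classical input provided by Corollary~\ref{serrelocalfieldsisawesome} in place of Theorem~\ref{zv}. First I would fix the cutoff $W$ and work inside the finite-type $\A_k^1$-category $\rrep(\Al)^{(W,W)}$, with object scheme $\ob$. By Proposition~\ref{simplicityisgeneric}, $Sim \subset \ob$ is constructible, and $Sim_{\b s}$ picks out the simple objects of $\rep(\Al_s)^{(W,W)}$ for each $s$. Next I would build a constructible set $Ind \subset \ob$ parametrizing the objects of the form $\ind(V_{\chi_1}(M_1), \dots, V_{\chi_k}(M_k))$ with $N = n_2 + \dots + n_k \leq W$: there are only finitely many choices of the partition data $(n_1, \dots, n_k)$ with $n_j \leq W$ for $j>1$ and finitely many simple $M_j \in \repord(S_{n_j})$ for $j>1$ (here $n_1 = t - N$ varies), so $Ind$ is a finite union of images of maps $\mathcal{C}har(A)^{\times k} \times_k (\A^1_k \times \text{(finite discrete)}) \to \ob$ built from the interpolated functors $V$ (the $V_\chi$-interpolation proposition) and $\ind$ (Proposition~\ref{inductionsemidirectat}), pushed into the object scheme; each such map has finite-type source, so $Ind$ is constructible by Chevalley.

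Then, as in the dAHA argument, applying Proposition~\ref{injisgeneric} (the $Epi$ version) I would produce a constructible set $Q \subset \ob$ whose fiber $Q_{\b s}$ consists exactly of the quotients of objects in $Ind_{\b s}$. The key containment is $Sim_{\b s} \subset Q_{\b s}$ for all sufficiently large integers $s$: this is precisely Corollary~\ref{serrelocalfieldsisawesome}, once one checks that the interpolated $V_\chi$ and $\ind$ functors specialize at large integers to the classical ones (the specialization assertions in Proposition~\ref{inductionsemidirectat} and the $V$-functor proposition), so that the classical simple object $\ind(V_{\chi_1}(M_1),\dots)$ with $M_1 \in \repord(S_{n_1})^{(N)}$ actually lies in $Ind_{\b s}$, and hence trivially is a quotient of something in $Ind_{\b s}$. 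Since $Sim$ and $Q$ are both constructible in $\ob$, the locus of $s \in \A^1_k$ over which $Sim_{\b s} \subset Q_{\b s}$ fails is constructible in $\A^1_k$; it misses all large integers, so it is not dense, hence is contained in a finite set. For $t$ outside that finite set, every simple object of $\rep(\Al_t)^{(W,W)}$ is a quotient of some $\ind(V_{\chi_1}(M_1),\dots,V_{\chi_k}(M_k))$ with $N \leq W$. A final refinement is needed to upgrade ``quotient of $\ind(\dots)$'' to ``is of the form $\ind(\dots)$'': for $t$ transcendental the categories are semisimple (Theorem~\ref{ssgeneric} applied to $\rrep(\Al)^{(W,W)}$, using Corollary~\ref{serrelocalfieldsisawesome} to see semisimplicity at large integers, together with $\repord(\Al_n)$ being semisimple there), so a quotient of $\ind(\dots)$ is a direct summand of it, and decomposing $\ind(\dots)$ into summands — each again of induced form by the classical decomposition, which holds generically by the same constructibility argument — gives the stated form; intersecting the finitely many excluded loci over the relevant finite set of summand-data keeps the bad set finite.

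The main obstacle I expect is not the constructibility bookkeeping, which is routine given the framework, but verifying that the constructible set $Ind$ genuinely captures, at large integers, \emph{all} the classical induced simples appearing in Corollary~\ref{serrelocalfieldsisawesome}: one must match the interpolated $\ind$ (which takes one transcendental-type slot $\rep(\Al_{t-N})^{(R)}$ and several fixed integral slots $\repord(\Al_{n_s})$) against the classical $\ind_{\Al_{n_1}\otimes\cdots\otimes\Al_{n_l}}^{\Al_n}$, and check the specialization compatibility $\ind(\ssp_m(A), B_2, \dots, B_l) \simeq \ssp_m(\ind(A, B_2, \dots, B_l))$ at almost all integers $m$ — exactly the content baked into Proposition~\ref{inductionsemidirectat}, but one should confirm it is uniform enough across the finitely many discrete choices of $(n_s)$ and $(M_j, \chi_j)$ so that a single cofinite set of integers works simultaneously. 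Everything else — Chevalley, denseness of $\mathbb{Z}_{\geq 0}$ in $\A^1_k$, and the $\mathrm{Gal}(\mathbb{C}/\mathbb{Q})$ transitivity on transcendentals to pass from ``one transcendental $t$'' to ``all transcendental $t$'' — is as in the earlier proofs.
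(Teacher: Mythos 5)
Your core mechanism is the paper's: build a constructible subset of the object scheme of $\rrep(\Al)^{(W,W)}$ parametrizing the images of the induced objects, observe via Corollary~\ref{serrelocalfieldsisawesome} that it contains $Sim_{\b{s}}$ at all sufficiently large integers $s$, and conclude by constructibility/density that the containment holds for all but finitely many $t$. Where you diverge is in importing the dAHA template wholesale: you pass to the constructible set $Q$ of \emph{quotients} of induced objects and then try to upgrade ``quotient of $\ind(\dots)$'' back to ``is of the form $\ind(\dots)$'' via generic semisimplicity and a claim that summands of induced objects are again of induced form. That last step is the weak point: direct summands of induced objects are not in general themselves induced (compare the Littlewood--Richardson decomposition for $\ind_{S_{n_1}\times S_{n_2}}^{S_n}$), and justifying it ``by the classical decomposition, which holds generically by the same constructibility argument'' is essentially assuming the proposition being proved. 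The detour is unnecessary, and the paper does not take it: unlike Theorem~\ref{zv}, where simples are only \emph{quotients} of standard modules, Corollary~\ref{serrelocalfieldsisawesome} says every simple object of $\repord(\Al_n)^{(N)}$ literally \emph{is} an object $\ind(V_{\chi_1}(M_1),\dots,V_{\chi_l}(M_l))$ with the $M_j$ simple, so at large integers one has $Sim_{\b{s}}\subset T_{\b{s}}$ directly for $T$ the (isomorphism-closed) constructible image of the induced objects --- a fact you yourself note in passing when you say the classical simple ``actually lies in $Ind_{\b{s}}$.'' Deleting the $Q$-and-upgrade material and running the argument with $Sim\subset T$ in place of $Sim\subset Q$ yields exactly the paper's proof; as written, your proof is repairable but the final refinement, taken at face value, is a gap.
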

The object $\ind( V_{\chi}(M_1), V_{\chi_2}(M_2), \dots,  V_{\chi_k}(M_k) )$ may be defined by choosing $M_1' \in \rep(\Al_{T-N}, \C(T))$ specializing to $M_1$, applying the functor of Proposition~\ref{inductionsemidirectat}, and specializing.

\begin{proof} 
The images of such objects form a constructible subset $T$ of $\ob$, the object scheme of $\rrep(\Al)^{(W,W)}$, in view of Proposition~\ref{simpleisgeneric}.  Applying Proposition~\ref{simpleisgeneric} again to $\rrep(\Al)^{(W,W)}$, we see that  $T_{\bar{t}} \supset Sim_{\bar{t}}$ for all but finitely many $t$, since this is true for the (Zariski dense) subset of sufficiently large integers by Corollary~\ref{serrelocalfieldsisawesome}.
\end{proof}

\subsection{The case of $A$ noncommutative}

We shall briefly indicate the changes necessary when $A$ is noncommutative.  For $n \in \mathbb{Z}_{\geq 0}$, it is known that the finite-dimensional representations of $\Al_n$ can be obtained via a similar process of parabolic induction, but where characters $\chi$ are replaced with irreducible representations of $A$. 
If $\chi$ is an irreducible representation of $A$, define the functor $V_{\chi}\colon \repord(S_n) \to \repord(\Al_n)$ by sending $M \in \repord(S_n)$ to $M \otimes \chi^{\otimes n}$, where $S_n$ permutes the factors of $\chi^{\otimes n}$.
Every finite-dimensional irreducible $\Al_n$ representation can be obtained from a partition $n = n_1 + \dots + n_l$, irreducible representations $\chi_1, \dots, \chi_l$ of $A$, irreducible $S_{n_j}$ representations $M_j$ for $1 \leq j \leq l$, via the familiar formula: 
$\ind_{\Al_{n_1} \otimes \dots \otimes \Al_{n_l}}^{\Al_n}(V_{\chi_1}(M_1), \dots, V_{\chi_l}(M_l))$.

Now take $n  \gg N$.
It follows that the simple $\Al_n$ representations that restrict to the trucation $\repord(S_n)^{(N,N)}$ are obtained via such a procedure with $n_2  + \dots + n_l \leq N$, $M_1 \in \repord(S_{n_1})^{(N)}$ simple, the dimensions of the $\chi_j, 1 \leq j \leq l$ bounded by some bound depending on $N$, and $\chi_1$ a one-dimensional representation (i.e., a character).  Indeed, the bound on $n_2 + \dots + n_l$ and the fact that $M_1 \in \repord(S_{n_1})^{(N)}$ follow from Lemma~\ref{keyboundkey}.  It follows that $\chi_1$ is one-dimensional, or otherwise $V_{\chi_1}(M_1)$ would be too large because of the tensor power and the induced object would not belong to $\repord(S_n)^{(N,N)}$.  Similarly, the dimensions of the $\chi_j, 2 \leq j \leq l$, must be bounded in terms of $N$ for $n$ sufficiently large.

Thus, in the same way, a version of Proposition~\ref{cutoffprop} holds for $A$ noncommmutative (but with $\chi_1$ required to be one-dimensional), because the $\chi_j$ can all be parametrized by suitable schemes, since their dimensions are bounded depending only on $N$.
It follows that:
\begin{theorem} \label{noncommutative} For $t$ transcendental, the simple objects in $\rep(\Al_t)$ for $A$ finitely generated are obtained via
$\ind(V_{\chi_1}(M_1), \dots, V_{\chi_l}(M_l))$ for each $M_j$ simple and $\chi_1$ one-dimensional.
\end{theorem}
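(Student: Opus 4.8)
We only sketch the argument; it parallels the commutative case. The plan is to fix a cutoff $W$ and prove the noncommutative analogue of Proposition~\ref{cutoffprop}: for all but finitely many $t$, every simple object of $\rep(\Al_t)^{(W,W)}$ is of the form $\ind(V_{\chi_1}(M_1),\dots,V_{\chi_l}(M_l))$ with $\chi_1$ one-dimensional, $M_1\in\rep(S_{t-N})^{(N)}$ simple, $M_j\in\repord(S_{n_j})$ simple and $\chi_j$ an irreducible $A$-module for $j\geq 2$, and $N=\sum_{j\geq 2}n_j$ bounded by a constant depending only on $W$. Since $\rep(\Al_t)=\bigcup_W\rep(\Al_t)^{(W,W)}$ and, for each $W$, the exceptional set is finite, the Galois-group argument used in the proofs of Theorem~\ref{bigthmyeah} and the preceding wreath-product theorem then yields the statement for all transcendental $t$.

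The first step is to assemble the interpolated functors realizing the right-hand side. By the bounds established in the discussion of the noncommutative case above (which rest on Lemma~\ref{keyboundkey}), the ``long-line'' representation $\chi_1$ is forced to be one-dimensional; hence $V_{\chi_1}$ is the character-evaluation functor and admits an interpolation $V\colon\mathcal{C}har(A)\times_k\del\to\rrep(\Al)$ constructed exactly as for commutative $A$ --- the induced $y$-morphism is given on the objects $[U]$ by the same recollement formula now carrying the scalar $\chi_1(a_i)$, and relations \eqref{eq:firstcondition}--\eqref{eq:secondcondition} and functoriality are checked by specializing $T\to n$ for $n$ large; $\mathcal{C}har(A)$ is of finite type since $A$ is finitely generated. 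The remaining factors $V_{\chi_j}(M_j)$ with $j\geq 2$ live at fixed integer rank $n_j$ in $\repord(\Al_{n_j})$ and require no interpolation; for fixed $W$ there are only finitely many admissible tuples $(n_j)$ and $(M_j)$, and, $\dim\chi_j$ being bounded in terms of $W$, the $\chi_j$ range over a finite-type representation scheme of $A$. Composing with the (iterated) induction functor of Proposition~\ref{inductionsemidirectat} and defining the induced object as $\sp_t$ of its $\C(T)$-version, as after Proposition~\ref{cutoffprop}, I obtain for each of the finitely many combinatorial types a morphism from a finite-type $\A_k^1$-scheme into the object scheme $\ob$ of $\rrep(\Al)^{(W,W)}$ whose image parametrizes the corresponding objects.

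The second step is the constructibility argument. The union over the finitely many types of the images of these morphisms is a constructible set $T\subset\ob$ by Chevalley's theorem, so by Proposition~\ref{simpleisgeneric} the set of $s$ with $Sim_{\b{s}}\subseteq T_{\b{s}}$ is constructible; if one prefers to work with simple quotients of the induced objects rather than the objects themselves, one enlarges $T$ using Proposition~\ref{injisgeneric}, though in fact the induced objects attached to distinct data are simple by standard Clifford theory. By the classical classification recalled above --- in particular the bounds $n_2+\dots+n_l\leq N$, $M_1\in\repord(S_{n_1})^{(N)}$, $\chi_1$ one-dimensional and $\dim\chi_j$ bounded, all consequences of Lemma~\ref{keyboundkey} --- this set contains every sufficiently large integer; being constructible and Zariski dense it contains a dense open subset of $\A_k^1$, hence has finite complement, hence contains every transcendental $t$. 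Taking the union over $W$ completes the proof.

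I expect the only real difficulty to be organizational: one must verify that when $\chi_1$ is one-dimensional the category-friendly formulas for $V_{\chi_1}$ genuinely collapse to the construction used for commutative $A$, so that its interpolation proof applies verbatim; one must keep track of truncation levels, since the iterated induction of Proposition~\ref{inductionsemidirectat} raises the cutoff by $N\leq W$ and so lands in some $\rep(\Al_t)^{(W',W')}$ with $W'$ controlled by $W$; and one must recall that Proposition~\ref{inductionsemidirectat} furnishes only a generically defined induction, which merely adds a handful of further bad fibers, harmlessly absorbed into the exceptional set. The remaining ingredients --- constructibility of the categorical properties, Zariski density of the integers, and the passage to all transcendental $t$ via the Galois action --- are formally identical to the commutative case already treated.
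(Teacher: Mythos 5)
Your proposal is correct and follows essentially the same route as the paper: reduce to a truncated version of Proposition~\ref{cutoffprop}, use Lemma~\ref{keyboundkey} together with the tensor-power dimension count to force $\chi_1$ one-dimensional and to bound $n_2+\dots+n_l$ and $\dim\chi_j$, parametrize the bounded-dimensional $\chi_j$ by finite-type representation schemes, and conclude by constructibility, density of the large integers, and the Galois action. The paper's own argument is just a compressed version of this, so no further comparison is needed.
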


Curiously, if $A$ has no one-dimensional representations (e.g., is a simple algebra), then $\rep(\Al_t)$ is empty for $t$ transcendental; this is an immediate corollary of Theorem~\ref{noncommutative}.
The author is indebted to Etingof for pointing out this generalization.

\subsection{Tensor structure   for $A$ a Hopf algebra}  We now  prove:
\begin{theorem} \label{hopfalgebraimpliestenscat} \label{tensorstructure} If $A$ is a Hopf algebra, 
then $\rep(\Al_t)$ can be  made into a tensor category, in a manner interpolating the usual tensor structure (using the Hopf algebra structure on $\Al_n$ for $n \in \mathbb{Z}_{\geq 0}$).  Indeed, the tensor structure becomes a bifunctor over $\A_k^1$.
\end{theorem}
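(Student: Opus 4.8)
The plan is to follow the pattern used throughout \S5: build the tensor bifunctor at the ``category-friendly'' polynomial level $\rep'(\Al_T, k[T])$, check every identity required of a tensor category by specializing $T\to n$ for large integers $n$ (where it reduces to a true statement about the \emph{genuine} Hopf algebra $\Al_n$), and then extend to the pseudo-abelian envelope and base change along $k[T]\to k$, $T\to t$.

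First I would record the classical input. If $A$ is a Hopf algebra then so is $A^{\otimes n}$, and $S_n$ acts on it by Hopf-algebra automorphisms permuting the tensor factors; hence the smash product $\Al_n = k[S_n]\ltimes A^{\otimes n}$ is a Hopf algebra, with $\Delta(\sigma)=\sigma\otimes\sigma$ for $\sigma\in S_n$, $\Delta(y_{i,a})=\sum_{(a)} y_{i,a_{(1)}}\otimes y_{i,a_{(2)}}$, counit $\epsilon(\sigma)=1$, $\epsilon(y_{i,a})=\epsilon_A(a)$, and antipode inherited from those of $k[S_n]$ and $A$. Consequently $\repord(\Al_n)$ is a monoidal category: $M\otimes_k N$ carries the action through $\Delta$, the unit object is the trivial module $V_{\epsilon_A}(\unital)$, and the associativity and unit constraints are those of $\mathrm{Vect}_k$. (If $A$ is moreover cocommutative the category is symmetric; in general, and in the statement of the theorem, only a tensor category is claimed.)

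Next, the interpolation. Working in the $k[T]$-linear category $\rep'(\Al_T,k[T])$ used above, given objects $(M,y_M)$, $(N,y_N)$ I take the underlying object of $M\otimes N$ to be the tensor product in $\rep'(S_T,k[T])$ and define $y_{M\otimes N}\colon A\otimes\f{h}\otimes M\otimes N\to M\otimes N$ to be the composite
\[ A\otimes\f{h}\otimes M\otimes N \xrightarrow{\Delta_A\otimes r\otimes\id} A\otimes A\otimes\f{h}\otimes\f{h}\otimes M\otimes N \xrightarrow{\ \tau\ } (A\otimes\f{h}\otimes M)\otimes(A\otimes\f{h}\otimes N) \xrightarrow{y_M\otimes y_N} M\otimes N, \]
where $\Delta_A$ is the coproduct of $A$ (a plain linear map of vector spaces), $r\colon\f{h}\to\f{h}\otimes\f{h}$ is the comultiplication dual to $\rho$ introduced at the start of this section, and $\tau$ is the shuffle built from the symmetry of $\rep'(S_T)$ that matches the two copies of $\f{h}$ with the two tensor factors. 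On a morphism one uses the monoidal structure on the morphisms of $\rep'(S_T,k[T])$. The unit object is $\unital=[\emptyset]$ equipped with $y=\epsilon_A\otimes s$, $s\colon\f{h}\to\unital$ the standard map. I would then verify, in this order: (i) $(M\otimes N,y_{M\otimes N})$ actually lies in $\rep'(\Al_T,k[T])$, i.e.\ $y_{M\otimes N}$ satisfies \eqref{eq:firstcondition}, \eqref{eq:secondcondition} and $y_1=s\otimes\id$; (ii) morphisms of $\Al_T$-modules tensor to morphisms of $\Al_T$-modules (functoriality); (iii) the associativity and unit constraints of $\rep'(S_T)$ are morphisms of $\Al_T$-modules and satisfy the triangle and pentagon axioms. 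After unwinding the recollement descriptions of all the morphisms involved, each of (i)--(iii) is an identity of polynomials in $T$, and each reduces, upon specializing $T\to n$ and restricting to objects of bounded size (so that $\rep(S_n)^{(M,N)}$ is a full subcategory of $\repord(S_n)$ for $n$ large), to one of the classical Hopf-algebra identities for $\Al_n$ recorded above, which hold for all $n$. Hence (i)--(iii) hold identically. The resulting bifunctor extends to the additive and Karoubi envelopes by Proposition~\ref{karoubiandps}, equipping $\ps(\rep'(\Al_T,k[T]))$ with a tensor structure; base changing $T\to t$ yields the tensor structure on $\rep(\Al_t)$, and since the construction was carried out entirely over $k[T]$ it is visibly a bifunctor $\rrep(\Al)\times_{\A^1_k}\rrep(\Al)\to\rrep(\Al)$ over $\A^1_k$. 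Compatibility with specialization --- the interpolation claim --- is immediate, since the category-friendly formula was chosen precisely to match $\Delta(y_{i,a})=\sum y_{i,a_{(1)}}\otimes y_{i,a_{(2)}}$, so that under the functors to $\repord(\Al_n)$ for $n$ large the bifunctor recovers the classical tensor product on bounded-size objects.

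The main obstacle is step (i): showing that the interpolated $y_{M\otimes N}$ really defines an object of $\rep(\Al_t)$. Morally this is just the statement that $\Delta\colon\Al_n\to\Al_n\otimes\Al_n$ is an algebra homomorphism, but the delicate point is to pin down the category-friendly formula exactly --- in particular, to place $r$ (the interpolation of the ``diagonal'' $e_i\mapsto e_i\otimes e_i$ on $\f{h}$) and the shuffle $\tau$ so that the two copies of $\f{h}$ are fed the \emph{same} ``index'' and paired with the correct tensor factors, and to keep the relative positions of $\Delta_A$, $\rho$, $r$ and $P_{1,2}$ consistent with \eqref{eq:secondcondition}. A wrong shuffle would still satisfy \eqref{eq:firstcondition} but violate \eqref{eq:secondcondition}. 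Once the formula is fixed, the verification is the routine specialization argument used in Propositions~\ref{inductionht} and~\ref{inductionsemidirectat}.
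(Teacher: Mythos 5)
Your construction of the monoidal structure is essentially sound and follows the same underlying method as the paper (reduce every identity to the integral case for large $n$); the paper merely organizes it differently, factoring the tensor product into an external product $\rep(\Al_t)\times\rep(\mathbf{B}_t)\to\rep((\mathbf{A\otimes B})_t)$ (Proposition~\ref{tensprodfirstcase}) followed by restriction along the algebra map $\Delta\colon A\to A\otimes_k A$ (Proposition~\ref{yeah}), and isolating the verification of \eqref{eq:secondcondition} into two reusable diagram lemmas about the rings $Q_1,Q_2$. That modularity is a matter of taste; your direct formula for $y_{M\otimes N}$ via $\Delta_A$, $r$ and a shuffle specializes to the correct classical coproduct, and your worry about getting the shuffle exactly right is legitimate but resolvable by the same specialization argument.

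The genuine gap is that you never use the antipode, and correspondingly never construct duals. In this paper (following Deligne's conventions) ``tensor category'' means \emph{rigid} monoidal, and the paper's proof devotes its entire second half to exhibiting right (and left) duals: one defines an anti-involution $\tau$ on $Q_1(M)$ coming from the self-duality of objects in $\rep(S_t)$, sets the $A$-action on $M^\vee$ to be $y'=\tau\circ y\circ S$ for $S$ the antipode, checks that $y'$ still satisfies \eqref{eq:secondcondition} (using $\theta(\tau f\otimes\tau g)=-\tau\theta(f\otimes g)$ and $\phi(\tau f)=\tau\phi(f)$), and then verifies that the evaluation and coevaluation morphisms of $\rep(S_t)$ intertwine the $y$- and $y'$-actions, which is exactly where the Hopf identity $m\circ(\id\otimes S)\circ\Delta=\eta\circ\epsilon$ enters. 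Your construction, as written, uses only the bialgebra structure of $A$ (coproduct and counit), so it cannot be the whole proof of a statement whose hypothesis is ``$A$ is a Hopf algebra'': it establishes a monoidal category with unit, but not the duality data. You should add the dual construction, or else explicitly weaken the conclusion to a monoidal (non-rigid) structure, which is not what the theorem asserts.
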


For any object $M \in \rep(S_t)$, set \( Q_1(M) = \hom( \f{h} \otimes M, M), \) and \(   Q_2(M) = \hom( \f{h} \otimes \f{h} \otimes M, M).\)
Both these can be made into associative rings in view of the map $r\colon \f{h} \otimes \f{h} \to \f{h}$ and its dual $\rho\colon \f{h} \to \f{h} \otimes \f{h}$. 
When $t \in \mathbb{Z}_{\geq 0}$ is very large, we can think of $Q_1(M)$ as parametrizing families of maps $f_i\colon M \to M, 1 \leq i \leq n$, such that $\sigma f_i \sigma^{-1} = f_{\sigma(i)}$ for $\sigma \in S_n$.  The ring structure comes from composing these families of maps.  Similarly, $Q_2(M)$ parametrizes families of maps $f_{ij}\colon M \to M, 1 \leq i,j \leq n$ such that  $\sigma f_{ij} \sigma^{-1} = f_{\sigma(i)\sigma(j)}$.  To check that the ring is associative in general, one reduces to the case $M = [S]$ for $S$ a finite set.  Then associativity is a polynomial condition in $t$, and its holding at sufficiently large natural numbers implies it at all $t$.  Similarly, it can be checked that there are algebra-homomorphisms $Q_1(X) \otimes_k Q_1(Y) \to Q_1(X \otimes Y), Q_2(X) \otimes_k Q_2(Y) \to Q_2(X \otimes Y)$ for all $X,Y$.

Suppose given a $y$-map $A \to Q_1(M)$ for $M \in \rep(S_t)$.  In order that it define a structure of $ \rep(\Al_t)$, there are certain conditions it must satisfy.  We shall first introduce some notation.

Define a $k$-linear map $\theta \colon Q_1(M) \otimes_k Q_1(M) \to Q_2(M)$ as follows.  Given $f,g\colon \f{h} \otimes M \to M$, define $\theta(f \otimes g)\colon \f{h} \otimes \f{h} \otimes M \to M$ by
\[ \theta(f \otimes g) =  f \circ (\id_{\f{h}} \otimes g ) -   g \circ (\id_{\f{h}} \otimes f ) \circ P_{1,2}.\]
Define $\gamma, \gamma'\colon Q_1(M) \otimes_k Q_1(M) \to Q_2(M)$ by 
\[ \gamma(f \otimes g) = f \circ (\id_{\f{h}} \otimes g) , \quad \gamma'(f \otimes g) = g \circ (\id_{\f{h}} \otimes f)   \circ P_{1,2}   .\]
Note that $\theta = \gamma - \gamma'$.  In the integer case, $\theta$ sends $\{f_i\} \otimes \{g_j\}$ to $f_i g_j - g_j f_i$, and there are similar computations for $\gamma, \gamma'$.
Define $\phi\colon Q_1(M) \to Q_2(M)$ by setting $\phi(f)\colon \f{h} \otimes \f{h} \otimes M \to M$ to be
\( \phi(f) = f \circ (\rho \otimes \id_M).\)
It is easy to check that $\phi$ is a ring-homomorphism, by reduction to the integral case.

Finally, we may state the two conditions for when $y\colon A \to Q_1(M)$ defines on $M \in \rep(S_t)$ the structure of an object in $\rep(\Al_t)$.
First,  $y$ must be a (unital) ring-homomorphism.
Second, the following diagram must commute:
\begin{equation} \label{hardcondition}
\xymatrix{
A \otimes_k A \ar[d]^{[\cdot, \cdot]} \ar[r]^-{y \otimes y} & Q_1(U) \otimes_k Q_1(U) \ar[r]^-{\theta} &  Q_2(U) \\
A \ar[r] &    Q_1(U) \ar[ur]_-{\phi} } \end{equation}
Here, of course, $[\cdot, \cdot]\colon A \otimes_k A \to A$ is the commutator map.

As a result, the following is immediate:
\begin{proposition} \label{yeah} An algebra homomorphism $A \to B$ induces a functor $\rep(\mathbf{B}_t) \to \rep(\Al_t)$.
\end{proposition}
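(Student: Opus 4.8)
The plan is to exhibit the induced functor directly on objects and morphisms and then verify that the two defining conditions for an object of $\rep(\Al_t)$ are preserved. Let $\psi\colon A \to B$ be the given algebra homomorphism. Given an object $M \in \rep(\bd{B}_t)$, which by definition is an object $M \in \rep(S_t)$ together with a $k$-linear map $y^B\colon B \to Q_1(M)$, I would define the induced object to have the same underlying $M \in \rep(S_t)$ and the composite $y^A = y^B \circ \psi \colon A \to Q_1(M)$. On morphisms the functor is the identity: a morphism in $\rep(\bd{B}_t)$ is a morphism in $\rep(S_t)$ commuting with the $y^B_b$ for all $b \in B$, hence in particular with the $y^B_{\psi(a)} = y^A_a$ for all $a \in A$, so it is a morphism in $\rep(\Al_t)$.

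The content of the proof is then just checking that $y^A = y^B \circ \psi$ satisfies the axioms. First, $y^B$ is a unital ring homomorphism and $\psi$ is a unital algebra homomorphism, so the composite $y^A$ is a unital ring homomorphism; this encodes both relation \eqref{eq:firstcondition} (which is the statement $y_a \circ (\id_{\f h} \otimes y_b) \circ r = y_{ab}$, i.e. multiplicativity of $y$ into $Q_1(M)$) and the unit condition $y_1 = s \otimes \id_M$. Second, for the commutativity of the square \eqref{hardcondition}, one uses that $\psi([a,b]) = [\psi(a),\psi(b)]$ since $\psi$ is an algebra homomorphism, together with the fact that the corresponding square for $y^B$ commutes by hypothesis; precomposing that square with $\psi \otimes \psi$ on the top-left corner and $\psi$ on the bottom-left corner, and using naturality of $[\cdot,\cdot]$, $\theta$, and $\phi$ (these are fixed $k$-linear maps built from $\f h$, not depending on $A$ or $B$), yields exactly the square for $y^A$. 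Functoriality in the two-categorical sense, $\rep((\psi' \circ \psi)_t) = \rep(\psi_t) \circ \rep(\psi'_t)$ and $\rep(\id_A) = \id$, is then immediate from associativity of composition of algebra maps.

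There is essentially no obstacle here: the statement is formal once one observes that the structure of an object of $\rep(\Al_t)$ has been repackaged (in the discussion preceding the proposition) as a unital ring homomorphism $A \to Q_1(M)$ plus one commuting square, and that the target data $Q_1(M)$, $Q_2(M)$, $\theta$, $\gamma$, $\gamma'$, $\phi$ depend only on $M \in \rep(S_t)$ and not on the algebra acting. The only point requiring a word of care is that $\psi$ need not be injective or surjective, but neither is used: pullback of a module structure along an arbitrary algebra homomorphism is always a module structure, and the categorical reformulation above makes this transparent. If one wanted to be scrupulous about the parametrized-scheme picture, one could note that when $A$ and $B$ are finitely generated the functor is induced by a morphism of the corresponding $\A_k^1$-categories $\rrep(\bd{B}) \to \rrep(\Al)$ over $\A_k^1$, obtained by pulling back the defining closed-subscheme conditions along $\psi$, but this is not needed for the bare statement.
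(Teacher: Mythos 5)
Your proof is correct and is exactly the argument the paper has in mind: the paper states the proposition as "immediate" from the preceding reformulation of an object of $\rep(\Al_t)$ as a unital ring homomorphism $A \to Q_1(M)$ satisfying the square \eqref{hardcondition}, and your write-up simply makes explicit the pullback along $\psi$, the compatibility with $[\cdot,\cdot]$, and the behavior on morphisms. No gaps.
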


We shall now derive certain commutation relations in $\rep(S_t)$ necessary to the proof of Theorem~\ref{tensorstructure}.

\begin{lemma} The following diagram is commutative for all $X,Y$:\\
\begin{equation} \label{grothendieck} \xymatrixcolsep{7pc}
\xymatrix{
Q_1(X) \otimes_k Q_1(Y) \otimes_k Q_1(X) \otimes_k Q_1(Y) \ar[d] \ar[r]^-{\theta_X \otimes \gamma_Y + \gamma'_X \otimes \theta_Y } & Q_2( X ) \otimes_k Q_2(Y) 
\ar[d] \\
Q_1(X \otimes Y) \otimes_k Q_1(X \otimes Y) \ar[r]^-{\theta_{X \otimes Y}} &  Q_2(X \otimes Y) 
} \end{equation} 
\end{lemma}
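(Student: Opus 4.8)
The strategy is to reduce the claim, as with all the combinatorial identities in this paper, to the case of very large integer rank $t=n$, where both $Q_1$ and $Q_2$ literally parametrize families of morphisms $\{f_i\},\{f_{ij}\}$ satisfying the stated $S_n$-equivariance, and where the algebra homomorphisms $Q_j(X)\otimes_k Q_j(Y)\to Q_j(X\otimes Y)$ are given by $\{f_i\}\otimes\{g_i\}\mapsto\{f_i\otimes g_i\}$ (respectively in two indices for $j=2$). Since the vertical maps, the maps $\theta,\gamma,\gamma'$, and the tensor-product structure maps are all defined by fixed morphisms in $\rep'(S_T)$ expressed via recollements, the commutativity of \eqref{grothendieck} is a polynomial identity in $T$; if it holds for all sufficiently large $n\in\mathbb{Z}_{\geq 0}$ it holds identically, hence for all $t$. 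So it suffices to prove the integer case, and there it reduces further to $X=[S]$, $Y=[S']$ for finite sets $S,S'$.

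Concretely, in the integer case one takes $f\otimes g\in Q_1(X)\otimes Q_1(Y)$ and $f'\otimes g'\in Q_1(X)\otimes Q_1(Y)$, representing families $\{f_i\},\{g_i\},\{f'_i\},\{g'_i\}$, and chases both ways. Along the left-then-bottom route, the pair maps first to $\{f_i\otimes g_i\},\{f'_i\otimes g'_i\}\in Q_1(X\otimes Y)$ and then under $\theta_{X\otimes Y}$ to the family
\[
(f_i\otimes g_i)(f'_j\otimes g'_j)-(f'_j\otimes g'_j)(f_i\otimes g_i)
=(f_if'_j)\otimes(g_ig'_j)-(f'_jf_i)\otimes(g'_jg_i),
\]
indexed by $(i,j)$. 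Along the top-then-right route one computes $\theta_X\otimes\gamma_Y+\gamma'_X\otimes\theta_Y$ applied to $f\otimes g\otimes f'\otimes g'$, which on the $(i,j)$ component is
\[
(f_if'_j-f'_jf_i)\otimes(g_ig'_j)\;+\;(f'_jf_i)\otimes(g_ig'_j-g'_jg_i),
\]
and then pushes this forward under $Q_2(X)\otimes Q_2(Y)\to Q_2(X\otimes Y)$. Expanding both expressions one sees the cross terms $-(f'_jf_i)\otimes(g_ig'_j)$ cancel and the two sides agree. Thus the diagram commutes for large $n$, and the polynomiality argument finishes the proof.

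The main obstacle — such as it is — is purely bookkeeping: one must make sure the tensor-product structure maps $Q_j(X)\otimes_k Q_j(Y)\to Q_j(X\otimes Y)$ really are the naive index-wise maps in the integer picture (this was asserted in the text preceding Theorem~\ref{tensorstructure}, in the discussion of $Q_1,Q_2$, and holds by the same reduction to $M=[S]$), and that $\gamma,\gamma'$ are the right ``one-sided'' halves of $\theta$ so that the asymmetric combination $\theta_X\otimes\gamma_Y+\gamma'_X\otimes\theta_Y$ is precisely what is needed for the $f'_jf_i\otimes g'_jg_i$ term to be routed correctly. Once the indexing conventions for $P_{1,2}$ in the definition of $\theta$ (and hence the ordering in products like $f_if'_j$ versus $f'_jf_i$) are pinned down, the identity is forced; no genuine analytic or categorical input beyond the already-established constructibility/polynomiality principle is required.
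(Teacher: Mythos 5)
Your proposal is correct and follows essentially the same route as the paper: reduce to the integral case by the polynomiality/finitely-many-roots argument, then compute the $(i,j)$-component of both composites index-wise and observe that $(f_if'_j)\otimes(g_ig'_j)-(f'_jf_i)\otimes(g'_jg_i)$ regroups exactly as $\theta_X\otimes\gamma_Y+\gamma'_X\otimes\theta_Y$. The paper's proof is the same computation, only stated more tersely.
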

\begin{proof} 
The result may be seen by reducing to the integral case, since a nonzero 1-variable polynomial has finitely many roots; this argument will recur.
Fix $X, Y \in \repord(S_n)$ and families $f_i, f'_j\colon X \to X, g_i, g'_j\colon Y \to Y$ corresponding to elements $f, f', g, g'$ of $Q_1(X), Q_1(Y)$.
Then going down and right in the diagram yields the family of maps
\begin{align*} h_{ij} & = (f_i \otimes g_i ) \circ (f'_j \otimes g'_j) - 
(f'_j \otimes g'_j) \circ (f_i \otimes g_i ) \\
& = ( f_i f'_j - f'_j f_i) \otimes  g_i g'_j - f'_j f_i \otimes (g'_j g_i - g_i g'_j) \\
& = \theta_X(f \otimes f') \otimes \gamma_Y(g \otimes g') + \gamma'_X(f \otimes f') \otimes \theta_Y(g \otimes g') . \end{align*}
\end{proof}

\begin{lemma} For any $X,Y$, the following diagram is commutative:
\begin{equation} \label{serre}
\xymatrix{
Q_1(X) \otimes_k Q_1(X) \otimes_k Q_1(Y) \ar[d]^{m_{Q_1(X)} \otimes \id} \ar[r]^-{\gamma_X \otimes \id} & Q_2(X) \otimes_k Q_1(Y) \ar[r]^-{\id \otimes \phi_Y} &  Q_2(X) \otimes_k Q_2(Y) \ar[d]  \\
Q_1(X) \otimes_k Q_1(Y) \ar[r] & Q_1(X \otimes Y) \ar[r]^{\phi_{X \otimes Y}} & Q_2(X \otimes Y) 
} \end{equation}
\end{lemma}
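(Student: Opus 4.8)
The plan is to reduce everything to the integral case $t = n \in \mathbb{Z}_{\geq 0}$ with $n$ large, exactly as in the proof of the previous lemma: both routes around the square \eqref{serre} are, after fixing $X = [P], Y = [Q]$ for finite sets $P, Q$, given by polynomial expressions in $t$ with coefficients determined by recollements, so it suffices to verify the identity on families of actual linear maps for $n \gg 0$, where a nonzero one-variable polynomial has only finitely many roots. So first I would fix $X, Y \in \repord(S_n)$ and families $f_i, f'_i \colon X \to X$ and $g_i \colon Y \to Y$ for $1 \le i \le n$, representing elements $f, f' \in Q_1(X)$ and $g \in Q_1(Y)$, and unwind what each composite does to these data.

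Going around the top-right path: first $m_{Q_1(X)}$ multiplies $f, f'$ to the family $\{f_i f'_i\}$; then $\gamma_X$ sends this together with $g$ — but here I need to be careful, since $\gamma_X$ takes two arguments in $Q_1(X)$, so the top horizontal arrow is really $\gamma_X$ applied to $\{f_i f'_i\}$ against... no: reading the diagram, $\gamma_X \otimes \id$ means $\gamma_X$ is applied to the two $Q_1(X)$-factors $f, f'$ and the $Q_1(Y)$-factor $g$ is carried along; then $m_{Q_1(X)} \otimes \id$ on the left edge multiplies $f, f'$ first. So the composite $\gamma_X \otimes \id$ sends $\{f_i\} \otimes \{f'_j\} \otimes \{g_k\}$ to the family $\{f_i f'_j\}_{ij} \otimes \{g_k\}$ in $Q_2(X) \otimes_k Q_1(Y)$, then $\id \otimes \phi_Y$ sends $\{g_k\}$ to the $Q_2(Y)$-family $\{g_k \delta_{kl}\}$ (the image of a single index under $\phi_Y$, which in the integral picture is $g \mapsto \{g_i\}$ placed diagonally via $\rho$), giving $\{f_i f'_j\}_{ij} \otimes \{g_k\}_{kl}$ with $k = l$; finally the right vertical map into $Q_2(X \otimes Y)$ combines these into $\{(f_i f'_j) \otimes g_k\}$. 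Going around the bottom-left: $m_{Q_1(X)} \otimes \id$ gives $\{f_i f'_i\} \otimes \{g_k\}$, the lower horizontal into $Q_1(X \otimes Y)$ gives the family $\{f_i f'_i \otimes g_i\}$, and $\phi_{X \otimes Y}$ spreads this diagonally to $\{(f_i f'_j) \otimes (g_i \text{ or } g_j)\}$ — and matching the two sides forces me to pin down precisely how $\phi$ interacts with the index bookkeeping. The identity I expect is that both sides equal $\gamma_X(f \otimes f') \otimes \phi_Y(g)$ in $Q_2(X \otimes Y)$ after applying the algebra-homomorphism $Q_2(X) \otimes_k Q_2(Y) \to Q_2(X \otimes Y)$, using that $\phi$ is a ring-homomorphism (established just before) and that $\phi_{X \otimes Y}$ is compatible with the tensor homomorphisms $Q_1(X) \otimes Q_1(Y) \to Q_1(X \otimes Y)$ and $Q_2(X) \otimes Q_2(Y) \to Q_2(X \otimes Y)$.

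The main obstacle I anticipate is purely bookkeeping: getting the index conventions in $\gamma$ versus $\phi$ to line up, since $\gamma_X(f \otimes f')$ lives in $Q_2(X)$ indexed by an ordered pair $(i,j)$ coming from the two inputs, whereas $\phi_Y(g)$ lands in $Q_2(Y)$ by a genuinely different mechanism (precomposition with $\rho \otimes \id$, i.e. the diagonal $g_i \mapsto g_i \delta_{ij}$ in the integral model), and one must check these two embeddings into $Q_2(X \otimes Y)$ are the ones that make the square commute. Once the integral statement is phrased correctly — which amounts to the trivial computation $(f_i f'_j) g_i = (f_i f'_j)(g_i)$ read off from the definitions of $\gamma$ and $\phi$ — the reduction argument (polynomiality in $t$, testing at large integers, then extending to the pseudo-abelian envelope and to all $t$) closes the proof verbatim as in the preceding lemma.
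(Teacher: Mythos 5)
Your overall strategy is the paper's: reduce to the integral case by polynomiality and verify the identity on families of maps. But the verification itself --- which is the entire content of the lemma --- is left unresolved, and where you do attempt it both of your final formulas are wrong. The key fact is that $\phi$ \emph{concentrates on the diagonal}: since $\rho(e_i\otimes e_j)=\delta_{ij}e_i$, applying $\phi$ to a $Q_1$-family $\{h_i\}$ yields the $Q_2$-family whose $(i,j)$-entry is $\delta_{ij}h_i$. Your description of the bottom path, where $\phi_{X\otimes Y}$ ``spreads $\{f_if'_i\otimes g_i\}$ diagonally to $\{(f_if'_j)\otimes(g_i\text{ or }g_j)\}$,'' has this backwards: no off-diagonal term $f_if'_j$ with $i\neq j$ can appear; the bottom path gives exactly $h_{ij}=\delta_{ij}\,(f_if'_i)\otimes g_i$. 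Your top-path answer $\{(f_if'_j)\otimes g_k\}$ has a floating index $k$ and never records the point that makes the square commute, namely that combining the full matrix $\gamma_X(f\otimes f')=\{f_if'_j\}_{ij}$ entrywise with the diagonal matrix $\phi_Y(g)=\{\delta_{kl}g_k\}$ under $Q_2(X)\otimes_k Q_2(Y)\to Q_2(X\otimes Y)$ kills every off-diagonal term, leaving the same family $\delta_{ij}\,(f_if'_i)\otimes g_i$. Finally, the identity you offer as the crux, $(f_if'_j)g_i=(f_if'_j)(g_i)$, is a tautology rather than the required statement; the statement to check is precisely the equality of the two diagonal families above. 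With that computation supplied, the reduction-to-integers argument does close the proof as in the preceding lemma, so the gap is in the execution of the integral check, not in the plan.
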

Here $m_{Q_1(X)}$ denotes the multiplication map.
\begin{proof} As usual, we need only consider the integral case.
Consider $f, f' \in Q_1(X), g \in Q_1(Y)$ parametrized by maps $\{f_i\}, \{f'_i\}, \{ g_i \}$.  Then going down and right yields the family of maps 
\( h_{ii} =   f_i f'_i  \otimes g_i  \) and $h_{ij}=0$ if $i \neq j$.  
Going one step to the right from the top-left square yields  $
\{ f_i f'_j   \} \otimes g \in Q_2(X) \otimes Q_1(Y)$, where the first $\{f_i f'_j   \}$ is intepreted as a family of maps (hence is in $Q_2(X)$).  Then, going right again and down yields
the same $h_{ij}$, as is easily checked. 
\end{proof}

\begin{proposition} \label{tensprodfirstcase} Let $C = A \otimes_k B$. There is a tensor product bifunctor
\( \rep(\Al_t) \times \rep(\mathbf{B}_t) \to \rep(\mathbf{C}_t) \)
sending $X,Y \to X \otimes Y$ such that if $A \to Q_1(X), y\colon B \to Q_1(Y)$ are the appropriate homomorphisms, then one takes $y\colon A \otimes_k B \to Q_1(X) \otimes_k Q_1(Y) \to Q_1( X \otimes Y)$.
\end{proposition}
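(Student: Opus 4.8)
The plan is to verify directly that the map $y\colon A\otimes_k B\to Q_1(X\otimes Y)$ described in the statement satisfies the two conditions that characterize objects of $\rep(\mathbf{C}_t)$ for $C=A\otimes_k B$: that $y$ is a unital ring-homomorphism, and that the commutation diagram \eqref{hardcondition} commutes (with $A$ replaced by $C$ and $U$ by $X\otimes Y$). First I would recall that, by hypothesis, $X\in\rep(\Al_t)$ gives a unital ring-homomorphism $y_A\colon A\to Q_1(X)$ and $Y\in\rep(\mathbf B_t)$ gives $y_B\colon B\to Q_1(Y)$, each satisfying \eqref{hardcondition}. The composite $y=m\circ(y_A\otimes y_B)$, where $m\colon Q_1(X)\otimes_k Q_1(Y)\to Q_1(X\otimes Y)$ is the algebra-homomorphism constructed just before the statement of Theorem~\ref{tensorstructure}, is then a unital ring-homomorphism $C=A\otimes_k B\to Q_1(X\otimes Y)$, and the relation $y_1=s\otimes\id$ is inherited componentwise. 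The functoriality on morphisms is immediate, since a morphism $X\to X'$ in $\rep(\Al_t)$ is a $\rep(S_t)$-morphism commuting with the $y_A$-action (similarly on the $B$-side), and such morphisms tensor to a morphism of $\rep(S_t)$ commuting with the combined $y$-action; one checks this is compatible with $\comp$ and $\id$, so we indeed get a bifunctor.

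The substance of the argument is the verification of \eqref{hardcondition}. Here I would use the two commutation lemmas \eqref{grothendieck} and \eqref{serre}, which are precisely the identities needed to ``distribute'' the maps $\theta$ and $\phi$ across a tensor product of $X$ and $Y$. Concretely: the commutator bracket on $C=A\otimes_k B$ is given by $[a\otimes b,\ a'\otimes b']=[a,a']\otimes b b' + a'a\otimes[b,b']$ — the standard formula for a tensor product of algebras. Applying $y\otimes y$ and then $\theta_{X\otimes Y}$ to an element $a\otimes b\otimes a'\otimes b'$, and using that $y_A,y_B$ are ring-homomorphisms together with Lemma~\eqref{grothendieck} applied to $(f,f')=(y_A a, y_A a')$ and $(g,g')=(y_B b, y_B b')$, rewrites the result as $\theta_X(y_A a\otimes y_A a')\otimes\gamma_Y(y_B b\otimes y_B b') + \gamma'_X(y_A a\otimes y_A a')\otimes\theta_Y(y_B b\otimes y_B b')$ in $Q_2(X)\otimes_k Q_2(Y)$, pushed into $Q_2(X\otimes Y)$. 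On the other hand, going the other way around \eqref{hardcondition} for $C$ means applying $[\cdot,\cdot]$ first, then $y$, then $\phi_{X\otimes Y}$; expanding $[a\otimes b,a'\otimes b']$ and using \eqref{hardcondition} for $A$ (which replaces $\phi_X\circ y_A\circ[\cdot,\cdot]$ by $\theta_X\circ(y_A\otimes y_A)$) and for $B$, together with Lemma~\eqref{serre} to handle the cross-terms $a'a\otimes bb'$ (where $\phi$ of a product must be matched against $\gamma$), yields the same element. So the two routes around \eqref{hardcondition} agree.

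Finally, the statement that this tensor product is a \emph{bifunctor over} $\A_k^1$ — which is what makes it usable in the parametrized framework — follows because all the ingredients ($Q_1$, $Q_2$, the maps $\theta,\gamma,\gamma',\phi$, the algebra-homomorphisms $m$) were themselves constructed by testing polynomial identities in $t$ at large integers, hence are defined over $k[T]$ and commute with base extension; I would note that the relations cut out by $y$ being a ring-homomorphism and by \eqref{hardcondition} define a closed subscheme of the object scheme of $\rrep(\mathbf C)$, and the assignment $(X,Y)\mapsto X\otimes Y$ is a morphism of $\A_k^1$-schemes into it. The main obstacle I expect is purely bookkeeping: getting the cross-terms in the expansion of $[a\otimes b,a'\otimes b']$ to match up exactly with the asymmetry built into $\gamma$ versus $\gamma'$ in Lemma~\eqref{grothendieck} and with the role of $\gamma$ (not $\theta$) in Lemma~\eqref{serre}. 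The cleanest way to control this is to do the entire verification in the integral case $X,Y\in\repord(S_n)$ with $n\gg 0$, where everything becomes the familiar identity $[f_i\otimes g_i,\ f'_j\otimes g'_j]$ among families of endomorphisms, and then invoke the ``polynomial in $t$, hence holds everywhere'' principle used repeatedly above; after that, extending to the pseudo-abelian envelope and specializing $T\to t$ is routine.
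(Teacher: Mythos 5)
Your proposal is correct and follows essentially the same route as the paper: the same decomposition $[a\otimes b,a'\otimes b']=[a,a']\otimes bb'+a'a\otimes[b,b']$, the same use of the two commutation lemmas \eqref{grothendieck} and \eqref{serre} (plus the $\gamma'$ analog) to establish \eqref{hardcondition} for $X\otimes Y$, and the same reduction to the integral case. The compatibility of $f\otimes g$ with the $y$-maps, which you dismiss as immediate, is carried out in the paper by an explicit (but routine) diagram chase, so nothing essential is missing.
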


\begin{proof} It is clear that $A \otimes_k B \to Q_1(X \otimes Y)$ is a ring-homomorphism.  We must show that the second condition is satisfied.  All homomorphisms $A \to Q_1(X), B \to Q_1(Y), A \otimes_k B \to Q_1(X \otimes Y)$ will be denoted by $y$; this abuse of notation should cause no confusion.
Fix $a \otimes b, a' \otimes b' \in A \otimes_k B$.
Then, since $[a \otimes b, a' \otimes b'] = [a,a'] \otimes bb' + a'a \otimes [b, b']$,
\begin{equation} \label{atiyah}
\phi(y_{[ a \otimes b, a' \otimes b']}) 
 = \phi( y_{[a,a']} \otimes y_{bb'} + y_{a'a} \otimes y_{[b,b']}) \end{equation}
Here we use $y_{[a,a']} \otimes y_{bb'}$ to refer to the image of it in $Q_1(X \otimes Y)$, by abuse of notation.

Next, by \eqref{grothendieck}, \eqref{serre} and its analog for $\gamma'$, and \eqref{atiyah},  
\begin{align*}
 \theta( y_{a \otimes b} \otimes y_{a' \otimes b'}) 
& = \theta_X(y_a \otimes y_{a'}) \otimes \gamma_Y( y_b \otimes y_{b'}) + \gamma'_X( y_{a} \otimes y_{a'}) \otimes \theta_Y( y_b  \otimes y_{b'}) \\
& = \phi_X(y_{[a,a']}) \otimes \gamma_Y( y_b \otimes y_{b'}) + \gamma'_X( y_{a} \otimes y_{a'}) \otimes \phi_Y( y_{[b,b']}) \\
& =  \phi_{X \otimes Y}( y_{[a,a']} \otimes y_{bb'} + y_{a'a} \otimes y_{[b,b']})
\\ & = \phi_{X \otimes Y}(y_{[ a \otimes b, a' \otimes b']}) = \phi( y_{[a \otimes b, a' \otimes b']} ).
\end{align*}
It follows that the homomorphism $y\colon A \otimes_k B \to Q_1(X \otimes Y)$ satisfies \eqref{hardcondition}.

Suppose given homomorphisms $f\colon X \to X', g\colon Y \to Y'$ where $X, Y \in \rep(\Al_t),$ $  X',Y' \in \rep(\mathbf{B}_t)$.  Then, we can consider $f \otimes g$ as a morphism in $\rep(S_t)$ from $X \otimes  Y \to X' \otimes Y'$.  It must, however, be checked that the commutation with the $y$-maps is preserved.  This is an easy calculation.
We have commutative diagrams:
\begin{equation} \label{twocd}
\xymatrix{
\f{h} \otimes X \ar[r]^{y_a} \ar[d]^{\id_{\f{h}} \otimes f} &  X \ar[d]^f \\
\f{h} \otimes X' \ar[r]^{y_a} \ar & X' }  \quad \quad  \xymatrix{ 
\f{h} \otimes Y \ar[r]^{y_b} \ar[d]^{\id_{\f{h}} \otimes g} &  Y \ar[d]^g \\
\f{h} \otimes Y \ar[r]^{y_b}  & Y } \end{equation}

Consider the diagram:
{\tiny
\[ 
{ \xymatrixcolsep{4pc}
\xymatrix{
\f{h} \otimes X \otimes Y \ar[r]^-{\rho \otimes \id} \ar[d]^{\id \otimes f \otimes \id} & \f{h} \otimes \f{h} \otimes X \otimes Y \ar[r]^{\id \otimes y_a \otimes \id} \ar[d]^{\id \otimes f \otimes \id}   & \f{h} \otimes  X \otimes Y  \ar[r]^-{P_{1,2} \circ (y_b \otimes \id) \circ P_{2,3}}  \ar[d]^{\id \otimes f \otimes \id} & X \otimes Y \ar[d]^{f \otimes \id}
\\
\f{h} \otimes X' \otimes Y \ar[d]^{\id \otimes \id \otimes g} \ar[r]^-{\rho \otimes \id} & \f{h} \otimes \f{h} \otimes X' \otimes Y  \ar[r]^{\id \otimes y_a \otimes \id} \ar[d]^{\id \otimes \id  \otimes g} & \f{h} \otimes X' \otimes Y \ar[r]^-{P_{1,2} \circ (y_b \otimes \id) \circ P_{2,3}} \ar[d]^{\id \otimes \id \otimes g}  & X' \otimes Y \ar[d]^{\id \otimes g} \\
\f{h} \otimes X' \otimes Y' \ar[r]^-{\rho \otimes \id} & \f{h} \otimes \f{h} \otimes X' \otimes Y'  \ar[r]^{\id \otimes y_a \otimes \id}  & \f{h} \otimes X' \otimes Y' \ar[r]^-{P_{1,2} \circ (y_b \otimes \id) \circ P_{2,3}} & X' \otimes Y'
}} \vspace{2mm}\] }
It is easy to see that each small square is commutative from \eqref{twocd}, so the whole square is commutative.  This implies that $f \otimes g$ commutes with the $y$-maps on $X \otimes Y$.   Thus, we have a functor indeed.
\end{proof}

\begin{proof}[Proof of Theorem~\ref{hopfalgebraimpliestenscat}]  
When $A$ is a Hopf algebra, the comultiplication homomorphism $\Delta \colon A \to A \otimes_k A$ yields a   functor  
\( \rep( ( \Al \otimes_k \Al)_t )  \to \rep(\Al_t) \)
by Proposition~\ref{yeah}, and consequently induces a monoidal structure on $ \rep(\Al_t)$ in view of Proposition~\ref{tensprodfirstcase}.
   The associativity property of this functor  follows from the associativity of the comultiplication,
\( (\Delta \otimes \id_A) \circ \Delta = (\id_A \otimes \Delta) \circ \Delta,\)
and the pentagon axiom follows since it is true in $\rep(S_t)$.  

Next, we construct the unital object. 
We take $\unital \in \rep(S_t)$ with the homomorphism $A \to Q_1(\unital)$ from 
$A \to k \to Q_1(\unital)$, where $A \to k$ is the counit and $k \to Q_1(\unital)$ the unit; this is an object of $\rep(\Al_t)$ because $Q_1(\unital), Q_2(\unital)$ are commutative.  The unital axioms now follow from those of the Hopf algebra, so that $\rep(\Al_t)$ is indeed a monoidal category.

We now show that $\rep(\Al_t)$ admits   duals,  which will complete the proof of the theorem.
First, for all $M \in \rep(S_t)$, the $k$-algebra $Q_1(M)$ has an anti-involution $\tau$ obtained as follows: if $M=[U]$, then the recollement $(C)$ of $\one, U, U$ is sent to the one of $\one, U, U$ reversing the order of the $U$'s.  This is to be thought of in the following manner.  If $|I|=t$, $\injr(U,I)$ has an $S_t$-invariant bilinear form.  Given a family $\{f_i\} \in Q_1(M)$, $\tau$ sends this to the family $\{ f_i^{\vee} \}\colon M \to M$.
There is a similar anti-involution on $Q_2(M)$.

Given $M_A \in \rep(\Al_t)$ now, we define the right dual $M_A^{\vee}$ as follows: as an object of $\rep(S_t)$, it is the same, which we will denote $M$ to emphasize that $M$ itself has no $A$-action.   If $y\colon A \to Q_1(M)$ is the morphism defining the $A$-structure on $M_A$, then consider the homomorphism
\( y'\colon A \xrightarrow{S}   A \to  Q_1(M)   \xrightarrow{\tau}   Q_1(M)  \)
as defining the $A$-structure on $M_A^{\vee}$, where $S\colon A \to A$ is the antipode map.
We now check that the diagram \eqref{eq:secondcondition} is commutative.  Note that $\theta( \tau f \otimes \tau g) = -\tau \theta(f \otimes g)$ and $\phi(\tau f) = \tau \phi(f)$, as are easily checked by reducing to the integral case.    
Now,  
\begin{align*}
 \theta( y'_a \otimes y'_b) & = \theta( \tau y_a \otimes \tau y_b )  = -\tau  \theta(y_a \otimes y_b ) \\ &  = -\tau  \phi( y_{[a,b]} ) = \phi(y'_{[a,b]}), \end{align*}
so $M_A^{\vee} \in \rep(\Al_t)$ indeed. Define the evaluation and coevaluation maps  $  M_A^{\vee} \otimes  M_A \to \unital$, $\unital \to M_A \otimes M_A^{\vee}$ in $\rep(\Al_t)$ as simply the usual   maps in $\rep(S_t)$; it needs only to be checked that they commute with the $y,y'$-actions.  We will do this for the coevaluation.

Let $u \in Q_1(M) \otimes_k Q_1(M)$.  
There is a map $w\colon\f{h} \to M \otimes M$ obtained
by $\f{h} \xrightarrow{(\id \otimes \coev)} \f{h} \otimes M \otimes M
\xrightarrow{(\id \otimes \tau )u} M \otimes M$.  Here $(\id \otimes \tau )u \in Q_1(M) \otimes_k Q_1(M)$, which is regarded as an element of $Q_1(M \otimes M)$. Here $\coev\colon \id \to M \otimes M$ is the coevaluation map in the tensor category $\rep(S_t)$.

\begin{lemma} The map $w\colon \f{h}   \to M\otimes M$ is dual to $m_{Q_1(M)}(u) \in Q_1(M)$.
\end{lemma}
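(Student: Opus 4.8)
The plan is to recognize this as an identity of morphisms in $\rep(S_t)$ which is polynomial in the parameter $t$ and in the coordinates of $u$, and then to verify it in the classical setting $t=n$ for $n$ a large positive integer, where both sides can be written out in bases. Here ``$w$ is dual to $m_{Q_1(M)}(u)$'' is to be read with respect to the canonical identification $Q_1(M)=\hom(\f{h}\otimes M,M)\simeq\hom(\f{h},M\otimes M)$ coming from the self-duality of $M$ in $\rep(S_t)$, under which a morphism $\ell\colon\f{h}\otimes M\to M$ corresponds to $(\ell\otimes\id_M)\circ(\id_{\f{h}}\otimes\coev)$. Unwinding the definition of $w$, the assertion is exactly the equality
\[ \bigl((\id\otimes\tau)u\bigr)\circ(\id_{\f{h}}\otimes\coev)\;=\;\bigl(m_{Q_1(M)}(u)\otimes\id_M\bigr)\circ(\id_{\f{h}}\otimes\coev) \]
of morphisms $\f{h}\to M\otimes M$ in $\rep(S_t)$.

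First I would reduce to $M=[S]$ for a finite set $S$: both sides are additive in $M$, and since $\rep(S_t)$ is the pseudo-abelian envelope of the subcategory on the objects $[S]$, the general case follows by passing to summands of direct sums. Both sides are also $k$-linear in $u\in Q_1(M)\otimes_k Q_1(M)$, so it suffices to take $u=f\otimes g$ with $f,g\in Q_1(M)$. For $M=[S]$ the schemes $Q_1(M)$ and $Q_1(M\otimes M)$ are affine spaces with coordinates indexed by recollements, and the two morphisms above are then given by polynomials in $t$ and in those coordinates; hence, exactly as in the arguments recurring throughout this section, it is enough to check the identity for $t=n$ with $n$ large, where $\rep(S_n)$ embeds into $\repord(S_n)$.

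In that integral case one argues directly. Let $\{e_i\}_{i\in I}$ be the standard basis of $\f{h}=\injr(\one,I)$, let $f=\{f_i\}$, $g=\{g_i\}$ be the associated $S_n$-equivariant families of maps $M\to M$, so that $m_{Q_1(M)}(f\otimes g)=\{f_i\circ g_i\}$ and $\tau g=\{g_i^{\vee}\}$, where $g_i^{\vee}$ is the adjoint of $g_i$ with respect to the invariant form identifying $M$ with $M^{\vee}$. Writing $\coev(1)=\sum_{\alpha}m_{\alpha}\otimes m^{\alpha}$ with $\{m_{\alpha}\}$ a basis and $\{m^{\alpha}\}$ the dual basis (viewed in $M$ via the form), one computes that the right-hand side sends $e_i\mapsto\sum_{\alpha}f_i(g_i(m_{\alpha}))\otimes m^{\alpha}$ while the left-hand side sends $e_i\mapsto\sum_{\alpha}f_i(m_{\alpha})\otimes g_i^{\vee}(m^{\alpha})$. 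These agree by the ``snake'' identity $(\psi\otimes\id_M)\circ\coev=(\id_M\otimes\psi^{\vee})\circ\coev$ applied with $\psi=g_i$ and then post-composed with $f_i\otimes\id_M$. This settles the classical case, and hence the lemma.

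I expect the only genuine obstacle to be bookkeeping of conventions: which tensor slot $\tau$ acts in, the order of multiplication in $Q_1(M)$, the direction of the hom-space identification, and the normalization of $\coev$ relative to the invariant form. Once these are pinned down as above the computation is immediate, and the passage from integer $t$ to arbitrary $t$ is the standard polynomiality/constructibility argument used repeatedly here.
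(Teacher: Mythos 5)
Your proposal is correct and follows the same route as the paper: reduce to the integral case by linearity/polynomiality, take $u=f\otimes g$ represented by families $\{f_i\},\{g_i\}$, and check that the induced map $\f{h}\to M\otimes M$ corresponds to the family $\{f_i g_i\}=m_{Q_1(M)}(u)$. The paper states this last computation without detail, whereas you carry it out explicitly via the snake identity; the substance is the same.
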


\begin{proof} We need only prove this   in the integral case.  Suppose that $u = f \otimes g \in Q_1(M) \otimes_k Q_1(M)$.  Then this can be represented as two families $\{f_i\}, \{g_i\}$ of maps $M \to M$.  The map $\f{h} \to M \otimes M$ corresponds to a family of maps $\{h_i \colon M \to M\}$, which is  seen to be $f_i g_i $.  
\end{proof}

Recall that for a Hopf algebra $A$, the following diagram is commutative:
\[ \xymatrix{ A \ar[r]^{\Delta} \ar[rd]_{\eta} & A \otimes_k A \ar[r]^{\id \otimes S} & A \otimes_k A \ar[r]^m & A \\
& k \ar[rru]_{\epsilon} }. \]
There is a homomorphism $A \to Q_1(M \otimes M)$ from the action on $M_A \otimes M_A^{\vee}$. 
This is given by the composition  
\[ \xymatrix{ A  \ar[r]^{(\id \otimes S) \circ \Delta} &   A \otimes_k A \ar[r]^-{y \otimes y} & Q_1(M) \otimes_k Q_1(M) \ar[r]^-{\id \otimes \tau } & Q_1(M) \otimes_k Q_1(M) }. \]

Finally, we must show that the following diagram commutes:
\[ \xymatrix{
\f{h} \ar[d]^{\eta(a)} \ar[r]^-{\id \otimes \coev} &  \f{h} \otimes M \otimes M \ar[d]^{ (\id \otimes \tau ) \circ (y \otimes y) ( (\id \otimes S) \circ \Delta(a))}   \\
\unital \ar[r]^-{\coev} & M \otimes M } \]

The map obtained by going right and then down is equivalent to the map $\f{h} \otimes M \to M$ from $m((y \otimes y)(\id \otimes S)\circ \Delta(a)) = y_{\epsilon(\eta(a))}$.  Since $y$ is a unital homomorphism, this is equivalent to $\eta(a)(s \otimes \id_M)\colon \f{h} \otimes M \to M$.  The map $\f{h} \to \unital$ is, by definition, $\eta(a)$ times   $s\colon \f{h} \to \unital$, so the map $\f{h} \to M \otimes M$ obtained by going down and then right is equivalent to $\eta(a)(s \otimes \id_M)\colon \f{h} \otimes M \to M$.   The same is done to show that $ M_A^{\vee} \otimes M_A \to \unital$ is a morphism in $\rep(\Al_t)$. Similarly, the existence of left duals can be shown.
\end{proof}

\section{Wreath products with finite groups}

Fix a finite group $G$.  We recall the definition of the Knop categories $\rep(S_t \ltimes G^t)$ in nonintegral rank.

First suppose $n \in \mathbb{Z}_{\geq 0}$.  Let $U$ be a finite   $G$-set, i.e., one where the stabilizer of each element is trivial, or equivalently one isomorphic to a coproduct of copies of $G$.
To it, we associate a representation $\injr_G(U, \coprod_n G) \in \repord(S_n \ltimes G^n)$ by considering the permutation representation of $S_n \ltimes G^n$ on the set $\inj_G(U, \coprod_n G)$ of $G$-equivariant injections $U \to \coprod_t G$.  
Note that 
\[ \injr_G(U, \coprod_n G)  \otimes \injr_G(V, \coprod_n G) = \bigoplus_{C \in \rec_G(U,V)} \injr_G(C, \coprod_n G) \]
where $C$ ranges over the set $\rec_G(U,V)$ of ``$G$-recollements'' of $U,V$, where a $G$-recollement is defined as an equivariant analog of the usual notion: one has equivariant injections $U \to C, V \to C$ whose images cover $C$.
Next, when $n$ is large relative to $U,V$, the space
\( \hom_{S_n \ltimes G^n}(  \injr_G(U, \coprod_n G), \injr_G(V, \coprod_n G) ) \)
has a basis of $(C)_G, C \in \rec_G(U,V)$, where $(C)_G$ is defined in a similar manner as it was in Deligne's case.
Finally, one can find polynomials $Q^E_{C,D}(T)$ for $ C \in \rec_G(U,V),D \in \rec_G(V,W), E \in \rec_G(U,W)$ such that
\begin{equation} \label{compglaw} (C)_G\circ (D)_G = \sum_{ E \in \rec_G(U,W)} Q^E_{C,D}(n) (E)_G.\end{equation}

Therefore, one may define a $\mathbb{Z}[T]$-linear category $\rep'(S_T \ltimes G^T)$ whose objects are symbols $[U]_G$ for finite free $G$-sets $U$ and 
$\hom([U]_G,[V]_G) $ is the free module on symbols $(C)_G$ for $C \in \rec_G(U,V)$. Composition is defined as in \eqref{compglaw} with $T$ replacing $n$.
For  $R$ a ring and $t \in R$, one specializes to obtain $R$-linear categories $\rep'(S_t \ltimes G^t, R)$ and takes the pseudo-abelian envelope to obtain $\rep(S_t \ltimes G^t, R)$.  Henceforth, we take $R = k$ for $k$ an algebraically closed field of characteristic zero unless indicated otherwise.

It is now clear that Knop's categories fit into the framework developed above.
In particular, there is an $\A_{k}^1$-category $\rrep(S \ltimes G)$ whose fibers at $t \in k$ yield $\rep(S_t \ltimes G^t)$.  Moreover, there are full subcategories $\rrep(S \times G)^{(M,N)}$ of finite type that filter $\rrep(S \ltimes G)$, as before.

We shall investigate the relationship between Knop's categories and Etingof's categories of semidirect products.  In detail, fix a finite group $G$, and let  $A = k[G]$ be the group algebra.

\begin{proposition} There is a functor $F$ of $\A^1_{k}$-monoidal categories $\rrep(S \ltimes G) \to \rrep(S \ltimes A^{\otimes})$ interpolating the usual ones.
\end{proposition}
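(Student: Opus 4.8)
The plan is to build $F$ first on the $\mathbb{Z}[T]$-linear (equivalently $k[T]$-linear) category $\rrep'(S_T\ltimes G^T)$ by an explicit combinatorial recipe, then extend it over the pseudo-abelian envelope and specialize $T\to t$. The mechanism I will lean on throughout is the one used above (e.g.\ in the proofs of Propositions~\ref{evinter} and \ref{inductionht}): an equality of morphisms whose matrix entries are polynomial in $T$ and which holds for all large integers $n$ holds identically over $k[T]$; it therefore suffices to match $F$ with the tautological identification $\repord(S_n\ltimes G^n)\simeq\repord(\Al_n)$ coming from $k[G^n]=A^{\otimes n}$ at large $n$. Concretely, for a finite free $G$-set $U\times G$ one checks that the $\Al_n$-module $\injr_G(U\times G,\coprod_n G)$ is, as an $S_n$-module, $\injr(U,I)\otimes_k k[G]^{\otimes U}$, with the $i$-th copy of $A=k[G]$ acting by left translation on the tensor factors indexed by $\phi^{-1}(i)$. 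So I define $F([U\times G]_G)$ to be the object of $\rrep(\Al)$ whose underlying Deligne object is $[U]\otimes_k k[G]^{\otimes U}$ (a summand of a large power of $\f{h}$) and whose $y$-morphism $A\otimes\f{h}\otimes\bigl([U]\otimes_k k[G]^{\otimes U}\bigr)\to [U]\otimes_k k[G]^{\otimes U}$ is, for $g\in G$, a sum over recollements of $\one,U,U$ with coefficients built from constant polynomials in $T$ and the left-translation maps of $k[G]$; by design this agrees at large $n$ with the action above. On morphisms, a $G$-recollement $C$ of $U\times G,V\times G$ is the same datum as an ordinary recollement $W$ of $U,V$ decorated with compatible elements of $G$, and I send $(C)_G$ to the composite of the Deligne morphism $(W)\colon[U]\to[V]$ with the evident induced map $k[G]^{\otimes U}\to k[G]^{\otimes V}$ on multiplicity spaces.

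To see $F$ is a well-defined functor: the conditions \eqref{eq:firstcondition}, \eqref{eq:secondcondition} and $y_1=s\otimes\id$ for $F([U\times G]_G)$, together with preservation of identities and of composition (composition on the source being governed by the polynomials $Q^E_{C,D}(T)$ of \eqref{compglaw}, on the target by the composition law of $\rrep(\Al)$), are all polynomial-in-$T$ identities of morphisms; each holds for every large $n$, where $F$ restricts to the tautological equivalence, hence holds over $k[T]$. Since $F$ produces only finitely many objects and morphisms of bounded Deligne size per size of $U$, it lands in the finite-type subcategories $\rrep(\Al)^{(R,S)}$; using idempotent-completeness of the target (the universal property behind Proposition~\ref{karoubiandps}) $F$ extends canonically to the pseudo-abelian envelope $\rrep(S\ltimes G)=\ps(\rrep'(S_T\ltimes G^T))$, giving a functor over $\A^1_k$. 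Specializing $T\to t$ gives the functors $\rep(S_t\ltimes G^t)\to\rep(\Al_t)$, and at integer $n$ the construction reduces modulo negligibles to $\repord(S_n\ltimes G^n)\simeq\repord(\Al_n)$, which is the asserted interpolation.

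It remains to promote $F$ to a monoidal functor over $\A^1_k$, comparing Knop's monoidal structure with the one furnished by Theorem~\ref{hopfalgebraimpliestenscat} for the Hopf algebra $A=k[G]$. On underlying Deligne objects, $F([U\times G]_G)\otimes F([V\times G]_G)$ has object $\bigl([U]\otimes[V]\bigr)\otimes_k k[G]^{\otimes(U\sqcup V)}=\bigoplus_{W\in\rec(U,V)}[W]\otimes_k k[G]^{\otimes(U\sqcup V)}$, which I must identify with $F\bigl([U\times G]_G\otimes[V\times G]_G\bigr)=\bigoplus_{C\in\rec_G(U\times G,V\times G)}F([C]_G)$; this is the $G$-equivariant refinement of Deligne's $[U]\otimes[V]=\bigoplus_C[C]$, the $G$-recollements over a fixed ordinary recollement $W$ accounting for exactly the multiplicity space $k[G]^{\otimes(U\sqcup V)}$. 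The essential point is that the $y$-structure on $F([U\times G]_G)\otimes F([V\times G]_G)$ is built through the comultiplication $\Delta\colon k[G]\to k[G]\otimes_k k[G]$, which is group-like, $g\mapsto g\otimes g$; this is precisely what encodes the diagonal action of the $i$-th copy of $G$ in $S_n\ltimes G^n$, so the isomorphism respects the $y$-maps. Naturality in $[U\times G]_G,[V\times G]_G$, compatibility with the associativity and unit constraints (unit object $\unital=[\emptyset]$ with the counit of $k[G]$, as in the proof of Theorem~\ref{hopfalgebraimpliestenscat}), and the coherence axioms are once more polynomial identities in $T$, valid at all large $n$, hence identically; passing to the envelope and specializing yields the monoidal $F$.

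The only genuinely delicate step is this monoidal comparison: one must match, copy by copy, the $G$-recollement combinatorics of Knop's tensor product against the $A\otimes_k A$-module structure that Theorem~\ref{hopfalgebraimpliestenscat} extracts from the comultiplication of $k[G]$, and observe that group-likeness of $\Delta$ makes the two agree. Everything else is routine once the objects $F([U\times G]_G)$ and their $y$-maps have been written down; the recurring device---reduce each claimed identity to the honest equivalence $\repord(S_n\ltimes G^n)\simeq\repord(\Al_n)$ at large $n$ and invoke vanishing of polynomials---makes the checks mechanical, so the real work is the bookkeeping of the multiplicity spaces $k[G]^{\otimes U}$ and the $G$-decorations of recollements.
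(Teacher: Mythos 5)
Your proposal is correct and follows essentially the same route as the paper: you send $[U]_G$ to a $|G|^{|U|}$-fold direct sum of $[U]$ (your multiplicity space $k[G]^{\otimes U}$ is the paper's indexing by $G^{u}$), define the $y$-maps and the images of $(C)_G$ by the same recollement-with-$G$-decoration combinatorics, and verify functoriality, the relations, and the tensor compatibility by specializing to large integers and invoking vanishing of polynomials, which is exactly the paper's "check by specialization." The only difference is presentational (your explicit treatment of the group-like comultiplication in the monoidal comparison spells out a step the paper leaves to the reader).
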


\begin{proof} 
We shall first associate to each object $[U]_G \in \rep'(S_T \ltimes G^T, k[T])$ an object of $\add\left( \rep'(S_T)\right)$ and appropriate $y$-morphisms.
Indeed, choose a set $u \subset U$ of representatives of the  orbits of $G$ in $U$, and note that for any $n$,
\( \injr_G( U , \coprod_n G) \simeq \bigoplus_{ G^{u}} \injr( u, I ).\)
So we define a map $\add \rep'(S_T \ltimes G^T) \to \add \rep'(S_T)$ by sending $[U]_G \to \bigoplus_{G^{u}} [u]$.  
The $y$-morphism can be described combinatorially. Given $g \in G$ and  $\bd{g}, \bd{g}' \in G^{u}$ and a recollement $C$ of $u, u, \one$, we must associate a coefficient in such a manner that it describes the homomorphism
\( y_g\colon \f{h} \otimes \bigoplus_{G^{u}} [u] \to  \bigoplus_{G^{u}} [u].\)
Suppose $C$ is given by $a\colon u \to C, b\colon u \to C$, and $c \in C$ corresponding to the image of $\one$.  Then $(C)$ occurs with coefficient $1$ if $a=b$, and $\bd{g}, \bd{g}'$ differ at one point corresponding to  $c$, by the value $g$.  Otherwise, $(C)$ occurs with coefficient zero. 
The functoriality of this map can be described combinatorially in terms of recollements in a similar way, and we omit the details.
The fact that this is a \emph{tensor} functor can be checked by specialization.  Indeed, it can be checked that the functor just defined actually interpolates the usual tensor functors.
\end{proof}

Our aim is to prove:
\begin{theorem} \label{tenseq} For $t \in k$ transcendental, the categories $\rep(S_t \ltimes G^t)$ and $\rep(S_t\ltimes A^{\otimes t})$ for $A = {k}[G]$ are tensor equivalent.
\end{theorem}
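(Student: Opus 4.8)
The plan is to exploit the fact that both families of categories have already been realized as fibers of $\A^1_k$-categories of finite type (namely $\rrep(S \ltimes G)$ and $\rrep(S \ltimes A^{\otimes})$ with $A = k[G]$), that the functor $F$ of the preceding proposition is an $\A^1_k$-monoidal functor between them, and that at large integers $t = n$ it recovers an equivalence of classical representation categories. The strategy is then the standard ``constructibility'' argument of \S\ref{secpar}: being an equivalence is a categorical property that holds on a constructible set of $t$, it holds on a Zariski-dense set (the large integers), hence it holds on a dense open set, which must contain a transcendental point, and then a Galois-descent argument promotes it to all transcendental $t$.

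First I would reduce the statement ``$F_t$ is a tensor equivalence'' to ``$F_t$ is an equivalence of $k$-linear categories,'' since $F$ is already a monoidal functor over $\A^1_k$ and monoidality of the inverse is automatic. Next I would check that $F_n$ is an equivalence for all sufficiently large integers $n$: this is the classical comparison between $\rep(S_n \ltimes G^n)$ and $\rep(S_n \ltimes (k[G])^{\otimes n}) = \rep(S_n \ltimes k[G^n])$, which is an equivalence because $A^{\otimes n} = k[G]^{\otimes n} = k[G^n]$ as algebras and $S_n$ acts compatibly. One must be slightly careful that the full subcategories $\rrep(S \ltimes G)^{(M,N)}$ and $\rrep(S \ltimes A^{\otimes})^{(R,S)}$ are matched up appropriately under $F$ (the object $[U]_G$ of ``$G$-degree'' $N$ goes to a direct sum of $|G|^N$ copies of objects of degree $N$, so sizes grow by a controlled factor); after restricting to suitable finite-type subcategories that are preserved by $F$, the relevant statement is that $F$ restricts to an equivalence on these.

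The technical heart is to phrase ``$F_t$ is an equivalence'' as a constructible condition on $t \in \A^1_k$. For this I would argue as follows: fully faithfulness means that for each pair of objects the map on $\hom$-schemes induced by $F$ is an isomorphism of affine spaces, which (since both $\hom$-spaces are coordinate affine spaces whose dimensions are governed by recollement counts independent of $t$, at least for $t$ outside a finite set) is checked by the nonvanishing of a determinant — an open condition in $t$. Essential surjectivity (up to the finite-type truncations and their Karoubi envelopes) is handled using Propositions~\ref{injisgeneric}, \ref{simplicityisgeneric}, and the constructibility of ``being in the essential image,'' exactly as in the proof of the lemma inside Theorem~\ref{bigthmyeah}: the image of $F$ on objects gives a constructible subset $Im \subset \ob$ of the target, and one needs $Im_{\bar t}$ to exhaust the (isomorphism classes of) objects, which is constructible in $t$ by Lemma~\ref{bijisconst} applied to the relevant classification schemes, or more directly by noting that both categories are semisimple for transcendental $t$ (by Theorem~\ref{ssgeneric} applied to each, using Maschke and Knop's filtration) so that an equivalence is detected on the level of simple objects, whose parametrizing schemes are of finite type. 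Combining these, the set of $t$ for which $F_t$ is an equivalence is constructible; it contains the Zariski-dense set of large integers, hence contains a dense open $U \subset \A^1_k$, hence contains a transcendental point, and finally the action of $\mathrm{Gal}(k/\mathbb{Q})$ (all schemes being defined over $\mathbb{Q}$) upgrades this to \emph{all} transcendental $t$.

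I expect the main obstacle to be the bookkeeping in essential surjectivity: one must match the two categories' truncations correctly under $F$ (where object sizes are scaled by powers of $|G|$), and then argue that every object of the transcendental target category lies in the image — this is cleanest via generic semisimplicity of both sides plus a count of simple objects, which requires separately establishing that $\rep(S_t \ltimes G^t)$ is semisimple for transcendental $t$ (via Theorem~\ref{ssgeneric}, Knop's filtration, and Maschke) and that $F_n$ induces a bijection on simples for large $n$. Once both families are known to be semisimple for transcendental $t$ and $F$ is fully faithful there, essential surjectivity follows by a dimension/simple-object count, and the constructibility-plus-Galois argument closes the proof.
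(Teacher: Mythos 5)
Your proposal is correct and follows essentially the same route as the paper: the paper likewise establishes (in the propositions immediately preceding the theorem) that fully faithfulness and essential surjectivity of $F_{\b{s}}$ are constructible conditions on the parameter, verifies the equivalence at large integers on matched truncations $\rep(S_t \ltimes G^t)^{(N,N')} \to \rep(\Al_t)^{(N,N'')}$ with $N'' \gg N'$, and concludes by the density/constructibility argument. Your alternative suggestion of handling essential surjectivity via generic semisimplicity and a count of simples is a workable variant, but the paper's primary mechanism is the same constructibility-of-essential-image argument you describe first.
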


\begin{lemma} Let $S$ be a noetherian scheme.  Let $R, R', T, T'$ be 
schemes of finite type over $S$.  Suppose given a commutative diagram
\[ \begin{CD}
R @>>> R' \\
@VVV @VVV \\
T @>>> T' \end{CD}. \]
Then there is a constructible subset $C \subset T$ consisting of the $t \in T$ such that $R_{\b{t}} \to R_{\b{t'}}$ is bijective (where $t' \in T'$ is the image of $t$).
\end{lemma}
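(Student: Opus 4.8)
The plan is to reduce the statement to Lemma~\ref{bijisconst} by base-changing $R'$ along $T \to T'$. Put $R'_T = R' \times_{T'} T$. Since $S$ is noetherian, all of $R, R', T, T'$ and their fibre products over $S$ are noetherian; in particular the structure morphisms $T \to S$ and $T' \to S$ are quasi-separated, so, since $R \to S$ and $R' \to S$ are of finite type, the cancellation property for finite-type morphisms shows that $R \to T$ and $R' \to T'$, hence also $R'_T \to T$, are of finite type. The commutative square provides, by the universal property of the fibre product, a $T$-morphism $g \colon R \to R'_T$ through which the top arrow $R \to R'$ factors.

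Now apply Lemma~\ref{bijisconst} with the base scheme taken to be the noetherian scheme $T$ in place of $S$, to the morphism $g \colon R \to R'_T$ of finite-type $T$-schemes: the set $C \subset T$ of points $t$ at which $cl(g_{\b t}) \colon cl(R_{\b t}) \to cl((R'_T)_{\b t})$ is bijective is constructible. It remains to recognise this as the set in the statement. For $t \in T$ with image $t' \in T'$ one has $(R'_T)_{\b t} = R'_T \times_T \spec \overline{k(t)} = R' \times_{T'} \spec \overline{k(t)}$, the fibre product formed along the composite $\spec \overline{k(t)} \to T \to T'$; as $k(t') \hookrightarrow k(t) \hookrightarrow \overline{k(t)}$, this is the base change of the geometric fibre $R'_{\b{t'}}$ along the extension $\overline{k(t')} \hookrightarrow \overline{k(t)}$ of algebraically closed fields, and is canonically $R'_{\b{t'}}$ itself whenever $k(t)$ is algebraic over $k(t')$ --- the situation in our intended applications. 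Thus $g_{\b t}$ is, up to this harmless enlargement of the algebraically closed base field, the map $R_{\b t} \to R'_{\b{t'}}$ of the statement, and $C$ is the desired constructible set.

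The one delicate point --- and the only one worth spelling out in the written proof --- is this last identification of fibres: one has to check that comparing $R_{\b t}$ with the base-changed fibre $(R'_T)_{\b t}$ rather than with $R'_{\b{t'}}$ on the nose changes nothing, which is precisely the residue-field bookkeeping already fixed by the conventions on geometric fibres at the beginning of \S\ref{secpar}. Everything else is the formal content of Chevalley's theorem, already packaged in Lemma~\ref{bijisconst}.
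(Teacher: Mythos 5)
Your proof is correct and takes essentially the same route as the paper: base-change $R'$ along $T \to T'$, factor the square through a $T$-morphism $R \to R' \times_{T'} T$, apply Lemma~\ref{bijisconst} over the base $T$, and identify the fibre $(R'\times_{T'}T)_{\b t}$ with $R'_{\b{t'}}$. If anything you are more careful than the paper on the one delicate point, the comparison of $\overline{k(t)}$ with $\overline{k(t')}$ (the paper simply asserts $\overline{k(t')}\simeq\overline{k(t)}$ ``by the finite type hypotheses,'' which really only holds when $k(t)/k(t')$ is algebraic, as you note).
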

\begin{proof} There is a map $f\colon R \to T \times_{T'} R'$ of $T$-schemes.  Now, for $t \in T$,
\[ (T \times_{T'} R')_{\b{t}} \simeq \spec \overline{k(t)} \times_{T'} R' \simeq \spec \overline{k(t')} \times_{T'} R' \simeq (R')_{\b{t'}}.\]
Then the set $C$ consists of $t \in T$ with $f_{\b{t}}$ bijective.
Therefore, we can apply Lemma~\ref{bijisconst} to the map $f$ and get constructibility.  Note that $\overline{k(t')} \simeq \overline{k(t)}$ by the finite type hypotheses.  
\end{proof}

\begin{lemma} If $\f{C}$ is a finite-type $S$-category for $S$ noetherian and $C \subset \ob$ is constructible, there is a constructible set $D \subset \ob$ such that if $s \in S$, then the closed points of $D_{\b{s}}$ corresponds precisely to the objects of $\mathcal{C}_{\b{s}}$ isomorphic to objects in $C_{\b{s}}$.
\end{lemma}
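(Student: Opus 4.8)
The plan is to characterise ``isomorphic to an object of $C$'' by the existence of a two-sided inverse pair of morphisms and then to parametrise such pairs by a scheme. An object $B$ of a category is isomorphic to an object $A$ precisely when there are morphisms $f\colon A\to B$ and $g\colon B\to A$ with $g\circ f=\id_A$ and $f\circ g=\id_B$; one cannot weaken this to ``there is a monic and epic map $A\to B$'', which fails in a general category. So I would parametrise the data $(A,B,f,g)$ subject to these two relations, impose $A\in C$, and take the image of the resulting scheme under $(A,B,f,g)\mapsto B$.

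\textbf{Steps.} First form the $S$-scheme $P=\mor\times_{\ob\times_S\ob}\mor$, where the first copy of $\mor$ maps to $\ob\times_S\ob$ by $(\dom,\cod)$ and the second by $(\cod,\dom)$; a point of $P$ is then a pair $(f,g)$ with $f\colon A\to B$ and $g\colon B\to A$ for some $A,B$, so that both composites $g\circ f$ and $f\circ g$ are defined. With $q_1,q_2\colon P\to\mor$ the projections, consider the four $S$-morphisms $P\to\mor$ sending $(f,g)$ to $g\circ f$, to $f\circ g$ (both formed using $\comp$), to $\id_{\dom f}$ ($=\id\circ\dom\circ q_1$), and to $\id_{\cod f}$ ($=\id\circ\cod\circ q_1$). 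Let $Iso\subset P$ be the intersection of the equalizer of the first and third with the equalizer of the second and fourth. As in the constructions of \S\ref{secpar}, $Iso$ is a locally closed, hence constructible, subscheme, and since base extension preserves fibre products and equalizers, its formation commutes with base change. Next set $Iso^{C}=(\dom\circ q_1)^{-1}(C)\cap Iso$, which is constructible because $C$ is and preimages of constructible sets are constructible, again compatibly with base change. Finally put $D=(\cod\circ q_1)(Iso^{C})\subset\ob$. Since $\f{C}$ is of finite type, the $S$-morphisms out of $Iso$ and $Iso^{C}$ are of finite type, so $D$ is constructible by Chevalley's theorem. Compatibility of $D$ with base extension is the identity $p_Y^{-1}(f(Z))=f_T(p_X^{-1}(Z))$ recalled in \S\ref{secpar}, applied to the cartesian square obtained by base changing along $\spec\overline{k(s)}\to S$, combined with the base-change compatibility of $Iso$ and $Iso^{C}$; hence $D_{\b s}=(\cod\circ q_1)_{\b s}\big((Iso^{C})_{\b s}\big)$.

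\textbf{Identifying closed points.} It then remains to read off the closed points over $\spec\overline{k(s)}$, exactly as in the discussion preceding Proposition~\ref{injisgeneric}: closed points of $\ob_{\b s}$ and $\mor_{\b s}$ are the objects and morphisms of $\mathcal{C}_{\b s}$, closed points of $C_{\b s}$ are the objects ``in $C$'', and a morphism of finite-type $\overline{k(s)}$-schemes sends closed points to closed points with the closed points of its set-theoretic image being precisely the images of closed points (via the Nullstellensatz on the fibres). So a closed point of $Iso_{\b s}$ is a pair $(f\colon A\to B,\,g\colon B\to A)$ with $g\circ f=\id_A$ and $f\circ g=\id_B$, i.e. an isomorphism of $\mathcal{C}_{\b s}$ together with its (unique) inverse; a closed point of $(Iso^{C})_{\b s}$ is such a pair with $A\in C_{\b s}$; and a closed point of $D_{\b s}$ is an object $B$ of $\mathcal{C}_{\b s}$ admitting an isomorphism $A\xrightarrow{\sim}B$ with $A\in C_{\b s}$, i.e. an object isomorphic to an object of $C_{\b s}$.

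\textbf{Main obstacle.} There is no essential obstacle here; the content is the bookkeeping already developed in \S\ref{secpar}. The two places needing a little care are that one must genuinely quantify over the inverse $g$ — so the parametrising scheme is $\mor\times\mor$ rather than $\mor$, and ``monic $+$ epic'' is not available — and that passing to $D_{\b s}$ requires commuting base change past an image, which is exactly what the cartesian-square identity of \S\ref{secpar} together with Chevalley's theorem over a noetherian base supply.
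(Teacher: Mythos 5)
Your proof is correct and is essentially the same argument the paper gives (the paper's version is just much terser: it asserts the existence of a constructible $Iso\subset\mor$ of isomorphisms "because $\comp$ is a morphism and the identities form a constructible subset," then takes domains/codomains and images). Your version merely keeps the inverse pair $(f,g)$ explicit in the parametrising scheme before projecting, which is the natural way to fill in the paper's "easy to see."
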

\begin{proof}
There is a constructible subset $Iso \subset \mor$ such that $(Iso)_{\b{s}}$ consists precisely of the isomorphisms in $\mathcal{C}_{\b{s}}$.  This is  easy to see because $\comp$ is a morphism and the identities form a constructible subset.  Then, the lemma becomes clear.
\end{proof}

\begin{proposition} Let $\f{C}, \f{D}$ be finite-type $S$-categories for $S$ a noetherian scheme and $F\colon \f{C} \to \f{D}$ a functor over $S$.  Then the set of $s \in S$ with $F_{\b{s}}\colon \mathcal{C}_{\b{s}} \to \mathcal{D}_{\b{s}}$ fully faithful (resp. essentially surjective) is constructible.
\end{proposition}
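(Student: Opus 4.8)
The plan is to deduce both halves from the two lemmas immediately preceding, together with Chevalley's theorem, by the pattern already used several times in this section: cut out a constructible locus inside an object or morphism scheme whose emptiness over a geometric point $\b{s}$ detects the relevant property of $F_{\b{s}}$, and then pass to $S$ by taking an image.

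For \emph{full faithfulness}, write $F$ as a pair of $S$-morphisms $F_{\ob}\colon\ob_{\f{C}}\to\ob_{\f{D}}$ and $F_{\mor}\colon\mor_{\f{C}}\to\mor_{\f{D}}$ commuting with $\dom,\cod,\comp,\id$. Functoriality gives a commutative square whose horizontal arrows are $F_{\mor}$ and $F_{\ob}\times_S F_{\ob}$ and whose vertical arrows are $(\dom,\cod)$, all four schemes being of finite type over $S$ since $\f{C}$ and $\f{D}$ are. The first of the two lemmas above, applied to this square, produces a constructible $C\subset\ob_{\f{C}}\times_S\ob_{\f{C}}$ whose closed points over a geometric point $\b{s}$ are exactly the pairs $(A,B)$ of objects of $\mathcal{C}_{\b{s}}$ for which $F_{\b{s}}$ induces a bijection $\hom_{\mathcal{C}_{\b{s}}}(A,B)\to\hom_{\mathcal{D}_{\b{s}}}(F_{\b{s}}A,F_{\b{s}}B)$; here one uses that passing to the geometric fibre identifies the fibre of $(\dom,\cod)\colon\mor_{\f{C}}\to\ob_{\f{C}}\times_S\ob_{\f{C}}$ over $(A,B)$ with the $\hom$-scheme and identifies $F_{\mor}$ with $F$ on morphisms. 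Then $F_{\b{s}}$ is fully faithful iff $C_{\b{s}}$ contains every closed point of $(\ob_{\f{C}}\times_S\ob_{\f{C}})_{\b{s}}$, iff the constructible set $(\ob_{\f{C}}\times_S\ob_{\f{C}}-C)_{\b{s}}$ contains no closed point, hence (being constructible in a finite-type scheme over $\b{k(s)}$) is empty, iff $s$ lies outside the image of $\ob_{\f{C}}\times_S\ob_{\f{C}}-C$ in $S$. That image is constructible by Chevalley, and so is its complement, which is the locus we want.

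For \emph{essential surjectivity}, note first that $C'=F_{\ob}(\ob_{\f{C}})\subset\ob_{\f{D}}$ is constructible, and that by compatibility of images with base change (\cite{EGA}, I.3.4.5) $C'_{\b{s}}$ is the set-theoretic image of $F_{\ob,\b{s}}\colon(\ob_{\f{C}})_{\b{s}}\to(\ob_{\f{D}})_{\b{s}}$; since these are finite-type schemes over the algebraically closed field $\b{k(s)}$, a closed point of $(\ob_{\f{D}})_{\b{s}}$ lies in $C'_{\b{s}}$ iff it equals $F_{\b{s}}(A)$ for some closed point $A$ of $(\ob_{\f{C}})_{\b{s}}$, i.e. for some object $A$ of $\mathcal{C}_{\b{s}}$. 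Feeding $C'$ into the second lemma above, applied to $\f{D}$, gives a constructible $D'\subset\ob_{\f{D}}$ whose closed points over $\b{s}$ are the objects of $\mathcal{D}_{\b{s}}$ isomorphic to some object of $C'_{\b{s}}$, hence isomorphic to some $F_{\b{s}}(A)$. So $F_{\b{s}}$ is essentially surjective iff $D'_{\b{s}}$ contains all closed points of $(\ob_{\f{D}})_{\b{s}}$, iff $(\ob_{\f{D}}-D')_{\b{s}}=\emptyset$, iff $s$ avoids the image of $\ob_{\f{D}}-D'$ in $S$; the complement of that (constructible) image is the desired locus.

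I do not expect a genuine obstacle: given the two preceding lemmas the result is formal. The one point needing care is the essential-surjectivity identification, namely that the set-theoretic image of $F_{\ob,\b{s}}$ on closed points coincides with the base change of $F_{\ob}(\ob_{\f{C}})$ and that this matches the hypothesis of the second lemma; this uses the finite-type and algebraic-closedness assumptions through the facts that constructible subsets of finite-type schemes over a field are determined by their closed points and that a closed point of a fibre of a finite-type morphism is closed in the source. Everything else is the routine ``constructible downstairs'' manipulation used repeatedly above.
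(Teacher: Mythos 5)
Your proof is correct and follows essentially the same route as the paper: apply the fiberwise-bijectivity lemma to $F_{\mor}$ over the object (pair) scheme for full faithfulness, and saturate the image $F_{\ob}(\ob_{\f{C}})$ under isomorphism via the second lemma for essential surjectivity, in both cases finishing by complement, Chevalley image, and complement. You are in fact slightly more careful than the paper, which works over $\ob_{\f{C}}$ where the hom-set criterion really lives over $\ob_{\f{C}}\times_S\ob_{\f{C}}$ as you have it.
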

\begin{proof} 
Let $\ob_C, \mor_C, \ob_D, \mor_D$ be the object and morphism schemes of $\f{C}, \f{D}$. 
Denote the induced morphisms of schemes $\ob_C \to \ob_D, \mor_C \to \mor_D$ by $F_{\ob}, F_{\mor}$, respectively.
 Let $T \subset \ob_C$ be the set of points $t$ such that the morphism $F_{\mor}\colon (\mor_C)_{\b{t}} \to (\mor_D)_{\b{F_{\ob}(t)}}$ is bijective.  Let $S_1 \subset S$ be defined as $S_1 = S- f(\ob_C - T)$, if $f\colon \ob_C \to S$ is the projection.  Then $S_1$ is constructible and corresponds to the set of $s \in S$ such that $F_{\b{s}}\colon \mathcal{C}_{\b{s}} \to \mathcal{D}_{\b{s}}$ is fully faithful. 

Consider $U_1 = F_{\ob}(\ob_C)$.  Let $U$ be constructed from $U_1$ as in the previous lemma.  Let $S_2 = S - f( \ob_D - U)$.  Then $S_2$ corresponds to points $s$ with $F_{\b{s}}$ essentially surjective. 
\end{proof}

\begin{proof}[Proof of Theorem~\ref{tenseq}] The internal functor $F$ as defined previously induces tensor functors $F_t\colon \rep(S_t \ltimes G^{ t}) \to \rep(\Al_t)$.   
Then the induced functor $\rep(S_t \ltimes G^t)^{(N)} \to \rep(\Al_t)^{(N)}$ is an equivalence when $t \in \mathbb{N}$ and $t \gg N$. More precisely, we can see that 
the functor is  faithful, and is full when considered as a functor 
from a subcategory of $\rep(S_t \ltimes G^t)^{(N, N')} \to \rep(\Al_t)^{(N, N'')}$, where $N'' \gg N'$ depends only on $N', N$.  This last fact is thus true generically.  The theorem follows.
\end{proof}

\bibliographystyle{abbrv}
\bibliography{Siemenspaper}

\end{document}